\theoremstyle{plain}
\newtheorem{theorem}{Theorem}[section]
 \newtheorem{mainresult}[theorem]{Main Result}
 \newtheorem{maincorollary}[theorem]{Main Corollary}
\newtheorem{lemma}[theorem]{Lemma}
\newtheorem{assumption}[theorem]{Assumption}
\theoremstyle{definition}
\newtheorem{remark}[theorem]{Remark}
\newtheorem{example}[theorem]{Example}
\numberwithin{equation}{section}
\newcommand{\linspan}{\mathop{\rm span}\nolimits}
\newcommand{\supp}{\mathop{\rm supp}\nolimits}
\newcommand{\diver}{\mathop{\rm div}\nolimits}
\newcommand{\curl}{\mathop{\rm curl}\nolimits}
\newcommand{\Curl}{\mathop{\rm Curl}\nolimits}
\newcommand{\sign}{\mathop{\rm sign}\nolimits}
\newcommand{\rest}{\left.\kern-2\nulldelimiterspace\right|_}
\newcommand{\norm}[2]{\left|#1\right|_{#2}}
\newcommand{\vol}{\mathop{\rm vol}\nolimits}
\newcommand{\indf}{1}
\newcommand{\ex}{\mathrm{e}}
\newcommand{\p}{\partial}
\newcommand{\ed}{\mathrm d}
\newcommand*{\Bigcdot}{\raisebox{-.25ex}{\scalebox{1.25}{$\cdot$}}}
\newcommand{\clC}{{\mathcal C}}
\newcommand{\clF}{{\mathcal F}}
\newcommand{\clG}{{\mathcal G}}
\newcommand{\clH}{{\mathcal H}}
\newcommand{\clK}{{\mathcal K}}
\newcommand{\clL}{{\mathcal L}}
\newcommand{\clO}{{\mathcal O}}
\newcommand{\clP}{{\mathcal P}}
\newcommand{\clT}{{\mathcal T}}
\newcommand{\clU}{{\mathcal U}}
\newcommand{\clV}{{\mathcal V}}
\newcommand{\bbN}{{\mathbb N}}
\newcommand{\bbR}{{\mathbb R}}
\newcommand{\bfA}{{\mathbf A}}
\newcommand{\bfF}{{\mathbf F}}
\newcommand{\bfG}{{\mathbf G}}
\newcommand{\bfH}{{\mathbf H}}
\newcommand{\bfJ}{{\mathbf J}}
\newcommand{\bfK}{{\mathbf K}}
\newcommand{\bfP}{{\mathbf P}}
\newcommand{\bfR}{{\mathbf R}}
\newcommand{\bfV}{{\mathbf V}}
\newcommand{\fkB}{{\mathfrak B}}
\newcommand{\fkN}{{\mathfrak N}}
\newcommand{\rmD}{{\mathrm D}}
\newcommand{\bfj}{{\mathbf j}}
\newcommand{\bfn}{{\mathbf n}}
\newcommand{\bft}{{\mathbf t}}
\newcommand{\rmd}{{\mathrm d}}
\newcommand{\rme}{{\mathrm e}}
\newcommand{\ttt}{{\tt t}}
\definecolor{DarkBlue}{rgb}{0,0.08,0.45}
\definecolor{DarkRed}{rgb}{.65,0,0}
\definecolor{applegreen}{rgb}{0.55, 0.71, 0.0}
\newcounter{mymac@matlab}
\newcommand{\matlab}{MATLAB%
   \ifnum\value{mymac@matlab}<1%
   \textregistered%
   \setcounter{mymac@matlab}{1}%
   \fi%
  }
\begin{document}
\title{Stabilization of 2D Navier--Stokes equations by means of actuators with locally supported vorticity}
\author{S\'ergio S.~Rodrigues$^{\tt1}$}
\author{Dagmawi A. Seifu$^{\tt1}$}
\thanks{
\vspace{-1em}\newline\noindent
{\sc MSC2020}: 93D15, 93B52, 93C20, 35K58, 35K41.
\newline\noindent
{\sc Keywords}: exponential stabilization to trajectories, oblique projection feedback, finite-dimensional control; continuous data assimilation; observer design
\newline\noindent
$^{\tt1}$ Johann Radon Institute for Computational and Applied Mathematics,
  \"OAW, Altenbergerstr. 69, 4040 Linz, Austria.
  \newline\noindent
{\sc Emails}:
{\small\tt sergio.rodrigues@ricam.oeaw.ac.at,\quad
dagmawi.seifu@ricam.oeaw.ac.at}%
}

\begin{abstract}
Exponential stabilization to time-dependent trajectories for the incompressible Navier--Stokes equations is achieved with explicit feedback controls. The fluid is contained in two-dimensional spatial domains and the control force is, at each time instant, a linear combination of a finite number of given actuators. Each actuator has its vorticity supported in a small subdomain. The velocity field is subject to Lions boundary conditions.
Simulations
are presented showing the stabilizing performance of the
proposed feedback.
The results also apply to a class of observer design problems.
\end{abstract}

\maketitle

\pagestyle{myheadings} \thispagestyle{plain} \markboth{\sc  S. S.
Rodrigues and D. A. Seifu}{\sc Stabilization of Navier--Stokes equations}

\section{Introduction}
Let us be given a trajectory $y_\ttt$ of the Navier--Stokes system as
\begin{subequations}\label{sys-NS}
  \begin{align}
 &\tfrac{\p}{\p t}y_\ttt -  \nu\Delta y_\ttt  +\langle y_\ttt\cdot\nabla\rangle y_\ttt + \nabla p_{\tt t}=f,\qquad\diver y_\ttt = 0,\\
 & \clG y_\ttt\rest{\p\Omega}= 0,\qquad y_\ttt(0,\Bigcdot)= y_{\ttt0}.
\end{align}
\end{subequations}
The spatial domain~$\Omega\in\bbR^2$, is a bounded convex polygonal domain. Denoting by~$(t,x)$ a generic point in the cylinder~$(0,+\infty)\times\Omega$, the state~$y_\ttt=y_\ttt(t,x)\in\bbR^2$ is the velocity vector, ~$p_{\tt t}=p_{\tt t}(t,x)\in\bbR$ represents the pressure function, $f=f(t,x)\in\bbR^2$ is an given external body force, and~$y_{\ttt0}=y_{\ttt0}(x)\in\bbR^2$ is a given initial velocity field, at time~$t=0$. The divergence free relation 
\begin{equation}\notag
0=\diver y_\ttt \eqqcolon  \tfrac{\p}{\p x_1}{y_\ttt}_1+\tfrac{\p}{\p x_2}{y_\ttt}_2
\end{equation}
means that we consider incompressible fluids. Above,~$({y_\ttt}_1,{y_\ttt}_2)\coloneqq y_\ttt$ and~$(x_1,x_2)\coloneqq x$ denote the coordinates of the state~$y_\ttt$ and of the spatial point~$x\in\Omega$. The operators~$\Delta$ and~$\nabla$ stand for the usual Laplacian and gradient operators, formally, for a scalar function~$g=g(x)$,
\begin{equation}\notag
\Delta g \coloneqq (\tfrac{\p}{\p x_1})^2g+ (\tfrac{\p}{\p x_2})^2g,\qquad \nabla g\coloneqq\left(\tfrac{\p}{\p x_1}g,\tfrac{\p}{\p x_2}g\right)
\end{equation}
and for vector fields~$y= (y_1, y_2)$ and~$z= (z_1,z_2)$,
\begin{equation}\notag
\Delta y=(\Delta y_1,\Delta y_2)\qquad\langle  y\cdot\nabla\rangle z \coloneqq  \left(y\cdot \nabla z_1,y\cdot\nabla z_2\right).
\end{equation}
Finally, the relation~$\clG y_\ttt\rest{\p\Omega}= 0$ stands for the (homogeneous) boundary conditions of the fluid velocity~$y_\ttt$.
We shall assume Lions boundary conditions. Namely, firstly the velocity is tangent to the boundary, that is, $(y_\ttt\cdot \bfn)\rest{\p\Omega}=0$ where~$\bfn$ stands for the unit outward normal vector to the boundary~$\p\Omega$ of~$\Omega$. We complement this with a  condition for the vorticity function
\begin{equation}\label{curl}
w_\ttt\coloneqq\curl y_\ttt\coloneqq \tfrac{\p}{\p x_2}{y_\ttt}_1-\tfrac{\p}{\p x_1}{y_\ttt}_2.
\end{equation}
That is, Lions boundary conditions correspond to
\begin{align}\label{lions-bc}
\clG y_\ttt\coloneqq(y_\ttt\cdot \bfn, \curl y_\ttt).
\end{align}
The terminology is motivated by~\cite[Sect.~6.9]{Lions69} and is also adopted in~\cite{Kelliher06,PhanRod17}. It  distinguishes Lions boundary conditions as a subclass of the more general class of Navier (slip) boundary conditions~\cite[Cor.~4.3]{Kelliher06}, which are appropriate to model the fluid velocity in some situations~\cite{ChemetovCiprianoGavrilyuk10}; for further works addressing these conditions we refer to~\cite{CloMikRob98,Coron96,BVeigaCrispo10,BerselliSpirito12,AmroucheRejaiba14} and references therein.

\subsection{Stabilization to trajectories.}
In real world applications we will likely have at our disposal a finite number of actuators only. We take this into consideration throughout this manuscript by taking a finite number~$M_\sigma$ of vector fields~$\Phi_j=\Phi_j(x)$, $1\le j\le M_\sigma$, as actuators.
Given a different initial state~$y_0\ne{y_{\tt t0}}$ we want to find a control input~$u=u(t)\in\bbR^{M_\sigma}$ such that the solution~$y$ of the controlled Navier--Stokes system
\begin{subequations}\label{sys-NS-control}
\begin{align}
 &\tfrac{\p}{\p t} y -  \nu\Delta y  +\langle  y\cdot\nabla\rangle  y + \nabla p=f+\sum_{j=1}^{M_\sigma}u_j\Phi_j,\qquad \diver y = 0,\\
 &\clG y\rest{\p\Omega}= 0,\qquad y(0,\Bigcdot)= y_0.
\end{align} 
\end{subequations}
converges to the targeted solution~$y_\ttt$ as time increases.
The scalars~$u_j=u_j(t)$ stand for the coordinates of the input vector,~$u\eqqcolon(u_1,\dots,u_{M_\sigma})$,  at time~$t$.

Furthermore, we would like that~$y(t,\Bigcdot)$ converges to~$y_\ttt(t,\Bigcdot)$ exponentially with an arbitrary apriori given rate~$\mu>0$. Finally, we look for an input in feedback form as
  \begin{equation}\notag
u(t)=\bfK(t,y(t,\Bigcdot)-y_\ttt(t,\Bigcdot))\in\bbR^{M_\sigma},\qquad\mbox{shortly } u=\bfK(y-y_\ttt),
\end{equation}
depending only on time~$t$ and on the difference between the current controlled  state~$y(t,\Bigcdot)$ and the targeted  state~$y_\ttt(t,\Bigcdot)$ at time~$t$.

Let us denote by~$U_M$ the  linearly independent set of actuators, 
\begin{align}
&U_M\coloneqq\{\Phi_i^M\mid 1\le j\le {M_\sigma}\},\qquad\clU_M\coloneqq\linspan U_M,\qquad \dim\clU_M={M_\sigma},\label{UM-intro}
\end{align}
and introduce the control operator
\begin{align}
&U_M^\diamond\colon\bbR^{M_\sigma}\to\clU_M,\qquad U_M^\diamond u\coloneqq\sum_{j=1}^{M_\sigma}u_j\Phi_j^M.\notag
\end{align}

This allows us to write the controlled dynamics as
\begin{subequations}\label{sys-NS-K}
\begin{align}
 &\tfrac{\p}{\p t} y -  \nu\Delta y  +\langle  y\cdot\nabla\rangle  y + \nabla p=f+U_M^\diamond\bfK(y-y_\ttt),\qquad \diver y = 0,\\
 &\clG y\rest{\p\Omega}= 0,\qquad y(0,\Bigcdot)= y_0.
\end{align} 
\end{subequations}

 Considering~\eqref{sys-NS} and~\eqref{sys-NS-K} as evolutionary equations in a suitable Hilbert space,  we can omit the spatial variable and write, for simplicity, $y_\ttt(t)\coloneqq y_\ttt(t,\Bigcdot)$ and $y(t)\coloneqq y(t,\Bigcdot)$.
The difference~$y(t)-y_\ttt(t)$ will  lie in a Hilbert space~$\bfV$. Our task is to find, for a given~$\mu>0$, a feedback operator~$\bfK\colon\bfV\to\bbR^{M_\sigma}$, 
so that
\begin{equation}\label{goal-exp}
\begin{split}
&\norm{y(t)-y_\ttt(t)}{\bfV}\le \rme^{-\mu (t-s)}\norm{y(s)-y_\ttt(s)}{\bfV},\quad\mbox{for all }t\ge s\ge 0\mbox{ and }  y_0-y_{\ttt0}\in\bfV.
\end{split}
\end{equation}

\subsection{The main stabilizability result}\label{sS:intro-main-res}
Let us consider the Hilbert space
\begin{equation}\label{spaceH}
\bfH=\{h\in (L^2(\Omega))^2\mid \diver h = 0\mbox{ and }(h\cdot \bfn)\rest{\p\Omega}=0\}.
\end{equation}
of square integrable divergence free vector fields, which are tangent to the boundary.
Initial states shall be taken in the subspace~$\bfV\coloneqq\bfH\bigcap (W^{1,2}(\Omega))^2$, where~$W^{1,2}(\Omega)$ stands for the usual Sobolev subspace of functions defined in~$\Omega$, which are Lebesgue square integrable and have square integrable  first order partial derivatives.

Let~$\vol(\Omega)$ be the volume (i.e., the area) of the spatial domain~$\Omega$. We can choose the total volume $\overline{\rm vol}\in(0,\vol(\Omega)]$ to be covered by the actuators.
We shall need a large enough number~$M_\sigma$ of actuators to stabilize the system. This number may depend on~$\overline{\rm vol}$, and will increase with~$\mu$ and~$\nu^{-1}$; here~$\nu$ is the viscosity parameter as in~\eqref{sys-NS-K}, and~$\mu$ is as in~\eqref{goal-exp}. For this reason, we consider a family~$U_M$ of actuators for each integer~$M$, which will enable us to increase~$M_\sigma$ by increasing the index~$M$.
Appropriate sequences~$(U_M)_{M\in\bbN_+}$ of families~$U_M$ will be given later on.
The main result of this manuscript is that, for an arbitrary given~$\mu>0$, the goal~\eqref{goal-exp} can be achieved by explicitly given feedback operators as
 \begin{align}
\bfK\colon\bfV\to\bbR^{M_\sigma},\qquad\bfK=\bfK_M^\lambda&\coloneqq - \lambda (U_M^\diamond)^{-1}\bfP_{\clU_{M}}^{\widetilde\clU_{M}^{\perp\bfV}}\bfA \bfP_{\widetilde\clU_{M}}^{\clU_{M}^{\perp\bfV}},\label{FeedKy2}
\end{align}
provided that~$\lambda>0$ and~${M_\sigma}=\dim\clU_M\in\bbN$ are large enough.  Here~$\bfA$ stands for the Stokes operator and~$\bfP_{F}^{G^{\perp\bfV}}$ stands for the oblique projection in~$\bfV$ onto~$F$
along~$G^{\perp\bfV}$, where~$G^{\perp\bfV}$ stands for the orthogonal complement to~$G$ in~$\bfV$. Thus, these projections depend on the scalar product in~$\bfV$, which we shall take as~$(y,z)_\bfV\coloneqq(\curl y,\curl z)_{L^2(\Omega)}$, where~$\curl y(x)\in\bbR$ denotes the vorticity function defined as in~\eqref{curl}. The space~$\widetilde\clU_{M}$ is spanned by appropriate auxiliary vector functions, which are used/needed due to some regularity issues, that is, besides the family of actuators in~\eqref{UM-intro}, we consider also
\begin{align}\notag
&\widetilde U_M\coloneqq\{\widetilde \Phi_i^M\mid 1\le j\le {M_\sigma}\},\qquad\widetilde \clU_M\coloneqq\linspan \widetilde U_M,\qquad \dim\widetilde \clU_M={M_\sigma},
\end{align}
where the elements of~$\widetilde U_M$ will be ``more regular'' than those in~$U_M$. These families will be  explicitly constructed later on, in Section~\ref{sS:act}, depending on~$\Omega$ and~$\overline{\rm vol}$ only.
In particular, we will have~$\clU_M+\widetilde\clU_M^{\perp\bfV}=\bfV=\widetilde\clU_M+\clU_M^{\perp\bfV}$ with~$\clU_M\bigcap\widetilde\clU_M^{\perp\bfV}=\{0\}=\widetilde\clU_M\bigcap\clU_M^{\perp\bfV}$, which allow us to define the oblique projections in~\eqref{FeedKy2}.

Througout this manuscript we shall fix~$\overline{\rm vol}\in(0,\vol(\Omega)]$ and~$\nu>0$.  The main result  will follow under a boundedness assumption for the vorticity.
 \begin{assumption}\label{A:bddwtar}
The vorticity~$w_\ttt=\curl y_\ttt$ of the targeted vector field trajectory~$y_\ttt$
satysfies $\norm{w_\ttt}{L^\infty((0,+\infty),W^{1,2}(\Omega))}\coloneqq C_{\tt t}<+\infty$.
 \end{assumption}
  Under this assumption, we shall show the following.
  \begin{mainresult}\label{MR:stab}
Given~$\mu>0$, there exist~$M_*=M_*(\mu,C_{\tt t})$ and~$\lambda_*=\lambda_*(\mu,C_{\tt t})$ such that the solution of~\eqref{sys-NS-K} with the feedback~\eqref{FeedKy2} satisfies~\eqref{goal-exp}, for all~$M\ge M_*$ and~$\lambda\ge\lambda_*$.  
Furthermore, the vorticity~$\curl\Phi_j^M$ of each actuator~$\Phi_j^M$ is supported in a subset~$\overline\omega_j^M\subset\Omega$, with~$\vol\left({\textstyle\bigcup_{j=1}^{M^2}}\omega_j^M\right)=\overline{\rm vol}$. Finally, the constants~$M_*$ and~$\lambda_*$ can be chosen independently of each other.
 \end{mainresult}

We follow the strategy used in~\cite{KunRodWal21} for a class of semilinear parabolic-like equations, introduced in~\cite{Rod21-aut} for  linear case. This strategy is appropriate to derive stabilizability results in a pivot space norm ($\bfH$ in our case), provided we show/have suitable continuity/boundedness properties for the operators defining our dynamics. Here we show stabilizability in the stronger norm of~$\bfV$. For this purpose we write the  2D Navier--Stokes equation in vorticity form and show the stabilizability of the vorticity in an appropriate pivot space~$H\coloneqq\curl\bfV$.

We shall write the  2D Navier--Stokes-like equations satisfied by the difference~$z\coloneqq y-y_\ttt$ in vorticity form and show that the resulting scalar parabolic equation satisfies the regularity and boundedness assumptions required  in the abstract setting in~\cite{KunRodWal21}. This task involve the derivation of appropriate estimates, using appropriate Sobolev embeddings, Young inequalities, Agmon inequalities, and interpolation results. The results in~\cite{KunRodWal21} by themselves would lead to a semiglobal stabilizability result~\cite[Thm.~3.1]{KunRodWal21} where~$(M_*,\lambda_*)$ depends also on an upper bound for the norm~$\norm{ y_0-y_{\ttt0}}{\bfV}$ of the initial difference. To derive the global result we shall use a particular property of the (vorticity of the) nonlinear term~$\langle  y\cdot\nabla\rangle  y$.

\subsection{On applications to observer design}\label{sS:intro-main-res-data}
Assume that the state~$y_\ttt$ of~\eqref{sys-NS} is not known and that we want to estimate it using the output of sensor measurements. In real world applications we will likely have at our disposal a finite number of sensors only. Taking this into account, if we look at the vector fields~$\Phi_j=\Phi_j(x)$, $1\le j\le M_\sigma$, as sensors (see~\cite{Rod21-aut}), then we can explore the particular structure of the operator~$\bfK_M^\lambda$ in~\eqref{FeedKy2}, and use the strategy in applications to observer design~\cite{Rod21-jnls}, also known as continuous data assimilation~\cite{AzouaniOlsonTiti14}. Indeed,  recalling~$\bfP_{\widetilde\clU_{M}}^{\clU_{M}^{\perp\bfV}}=\bfP_{\widetilde\clU_{M}}^{\clU_{M}^{\perp\bfV}}\bfP_{\clU_{M}}^{\clU_{M}^{\perp\bfV}}$, we can write
\begin{equation}\notag
\bfK_M^\lambda(y-y_\ttt)=\bfK_M^\lambda \bfP_{\clU_{M}}^{\clU_{M}^{\perp\bfV}}(y-y_\ttt).
 \end{equation}
 Now, if we look at the elements~$\Phi_j^M$ of~$U_M$ as sensors measuring the ``generalized average''~${s_{\rm tar}}_j\coloneqq(\curl y_\ttt,\curl \Phi_j^M)_{L^2(\Omega)}=(y_\ttt,\Phi_j^M)_{\bfV}$ of the vorticity, giving us the output vector~$Zy_\ttt \coloneqq {s_{\rm tar}}\in\bbR^{M_\sigma}$, we can define the output injection operator
\begin{equation}\notag
\bfJ\colon \bbR^{M_\sigma}\to\bfV,\qquad \bfJ=\bfJ_M^\lambda=U_M^\diamond\bfK_M^\lambda U_M^\diamond\bfV_M^{-1},
 \end{equation}
 where~$\bfV_M\in\bbR^{M_\sigma\times M_\sigma}$ is the matrix with entry~$(\Phi_i^M,\Phi_j^M)_{\bfV}$ in the $i$th row and~$j$th column. In this case we will have the relation
 \begin{equation}\notag
 U_M^\diamond\bfV_M^{-1}Z= \bfP_{\clU_{M}}^{\clU_{M}^{\perp\bfV}},
 \end{equation}
 which implies~$U_M^\diamond\bfK_M^\lambda=\bfJ_M^\lambda Z$.
Therefore, we can see
system~\eqref{sys-NS-K} as a Luenberger observer for system~\eqref{sys-NS},
 \begin{subequations}\label{sys-NS-K-obs}
\begin{align}
 &\tfrac{\p}{\p t} y -  \nu\Delta y  +\langle  y\cdot\nabla\rangle  y + \nabla p=f+\bfJ_M^\lambda (Zy-{s_{\rm tar}}),\qquad \diver y = 0,\\
 &\clG y\rest{\p\Omega}= 0,\qquad y(0,\Bigcdot)= y_0,
\end{align} 
\end{subequations}
where now we see~$y(t)$ as an estimate for~$y_\ttt(t)$.

Then we can interpret/rewrite Main Result~\ref{MR:stab} as follows.
 \begin{maincorollary}\label{MC:obs}
Given~$\mu>0$, there exist~$M_*=M_*(\mu,C_{\tt t})$ and~$\lambda_*=\lambda_*(\mu,C_{\tt t})$ such that the fluid velocity estimate provided by the observer~\eqref{sys-NS-K-obs} satisfies~\eqref{goal-exp}, for all~$\lambda\ge\lambda_*$ and~$M\ge M_*$, where~$M_*$ and~$\lambda_*$ can be chosen independently of each other.
 \end{maincorollary}

\subsection{Further literature}\label{sS:litter}
The stabilization of the Navier--Stokes system to/around a targeted solution~$y_\ttt$, by using a finite number of actuators only, has been investigated in several settings. Probably, the first theoretical results are~\cite{BarbuTri04} for a time-independent~$y_\ttt$ (i.e., an equilibrium), and~\cite{BarRodShi11} for a time-dependent~$y_\ttt$.

The above works consider a Riccati based feedback control, constructed to stabilize the linear Oseen--Stokes system obtained by linearizing the dynamics around the targeted state. Such a feedback is able to stabilize the nonlinear dynamics locally, that is, provided that the initial difference~$y_0-y_{\ttt0}$ is small in a suitable norm. Instead, the result stated in Main Result~\ref{MR:stab} is global; no constraint is imposed on the norm of~$y_0-y_{\ttt0}$ in~\eqref{goal-exp}.

Explicitly given feedbacks may require a number of actuators larger than that required by Riccati based feedbacks. However, they are much cheaper from the computational point of view. After spatial discretization, instead of computing the solution~$\Pi\in\bbR^{n\times n}$ of a nonlinear Riccati matrix equation, we need to compute the oblique projections, which involve the inversion of a relatively smaller matrix~$\clP\in\bbR^{M_\sigma\times M_\sigma}$, with entries as~$\clP_{ij}=(\Phi_i,\widetilde\Phi_j)_\bfV$ in the $i$-th row and~$j$-th column; see~\cite[Lem.~2.8]{KunRod19-cocv}.

 We mention also the strategy in~\cite{AzouaniTiti14} which proposes explicit feedbacks analogue to the ones we construct in here, but with a different proof strategy, see also~\cite{AzouaniOlsonTiti14} in an observer design  (continuous data assimilation, state estimation) setting. This strategy is also based on the ``tuning'' of a pair~$(M,\lambda)$, where~$M$ is (related to) the number of actuators and a positive ``gain'' parameter~$\lambda$. The proof strategy in~\cite{AzouaniTiti14} leads to a result where we choose firstly~$\lambda>0$ and then~$M=M(\lambda)$ (see~\cite[Eq.~(24)]{AzouaniTiti14}), while we follow the strategy in~\cite{KunRodWal21}  leading in general to a result where we firstly choose~$M$ and then~$\lambda=\lambda(M)$. Actually, with slightly different arguments, in the particular setting of 2D Navier--Stokes equations and with the feedback control input~$\bfK_M^\lambda$, as in~\eqref{FeedKy2}, we will be able to show one more important feature for applications, namely, that we can choose~$\lambda\ge\lambda_*$ and~$M\ge M_*$ independently of each other.

Though we do not address here the case of boundary controls, we would like to mention the local stabilization works~\cite{BadTakah11}
for a targeted equibrium~$y_\ttt(t)=y_{\ttt0}$ and~\cite{Rod21-amo} for a time-dependent target~$y_\ttt(t)$. These works use Riccati based boundary feedback controls. Again to avoid the potential expensive computations associated  with such feedbacks, a more explicit locally stabilizing boundary feedback is proposed in~\cite[Thm.~2.3]{Barbu12}, see also~\cite[Thm.~4.1]{BarbuMunt12}.

\subsection{Contents and general notation}\label{sS:notation}
The rest of the paper is organized as follows. 
In Section~\ref{S:setting} we gather some functional spaces which are appropriate to investigate the evolution of the velocity field and its vorticity and we address the contruction of the control actuators.
In Section~\ref{S:vort} we recall the dynamics of the vorticity and reformulate the feedback operator in terms of the vorticity.
The proof of the main stabilizability result is given in Section~\ref{S:proofmain}. We validate our theoretical findings through results of simulations presented in Section~\ref{S:numerics}. Finally, in Section~\ref{S:conclusion} we discuss potential future work, including comments on the 3D Navier--Stokes system and on the shape of the actuators.

\medskip
Concerning the notation, we write~$\bbR$ and~$\bbN$ for the sets of real numbers and nonnegative
integers, respectively, and we define $\bbR_+\coloneqq(0\,+\infty)$,  and~$\mathbb
N_+\coloneqq\mathbb N\setminus\{0\}$.

Given Hilbert spaces~$X$ and~$Y$, if the inclusion
$X\subseteq Y$ is continuous, we write $X\xhookrightarrow{} Y$. We write
$X\xhookrightarrow{\rm d} Y$, respectively $X\xhookrightarrow{\rm c} Y$, if the inclusion is also dense, respectively compact.
The space of continuous linear mappings from~$X$ into~$Y$ is denoted by~$\clL(X,Y)$. In case~$X=Y$ we 
write~$\clL(X)\coloneqq\clL(X,X)$.
The continuous dual of~$X$ is denoted~$X'\coloneqq\clL(X,\bbR)$.

The space of continuous functions from a subset~$S\subseteq X$ into~$Y$ is denoted by~$\clC(S,Y)$. The space of
increasing functions, defined in~$\overline{\bbR_+}=[0,+\infty)$ and vanishing at~$0$ is denoted:
\begin{equation}\notag
 \clC_{0,\rm i}(\overline{\bbR_+},\bbR) \coloneqq \{\mathfrak n\!\mid \mathfrak n\in \clC(\overline{\bbR_+},\bbR),
 \quad\!\! \mathfrak n(0)=0,\quad\!\!\mbox{and}\quad\!\!
 \mathfrak n(\varkappa_2)\ge\mathfrak n(\varkappa_1)\;\mbox{ if }\; \varkappa_2\ge \varkappa_1\ge0\}.
\end{equation}
Next,  we denote by~$\clC_{\rm b, i}(X, Y)$ the vector subspace 
\begin{equation}\notag
 \clC_{\rm b, i}(X, Y)\coloneqq
 \left\{f\in \clC(X,Y) \mid \exists\mathfrak n\in \clC_{0,\rm i}(\overline{\bbR_+},\bbR)\;\forall x\in X:\;
\norm{f(x)}{Y}\le \mathfrak n (\norm{x}{X})
\right\}.
\end{equation} 

The scalar product on a Hilbert space~$\clH$  is denoted~$(\Bigcdot,\Bigcdot)_\clH$. Given closed subspaces~$\clF$ and~$\clG$ of~$\clH$, in case 
$\clF\cap \clG=\{0\}$ we say that $\clF+\clG$ is a direct sum and we write $\clF\oplus \clG$ instead.
For a subset~$S\subseteq\clH$, its orthogonal complement is
denoted~$S^{\perp \clH}\coloneqq\{h\in \clH\mid (h,s)_\clH=0\mbox{ for all }s\in S\}$.

Finally, $C,\,C_i$, $i\in\bbN$, stand for unessential positive constants, which may take different values at different places within the manuscript.

\section{Functional setting and families of actuators}\label{S:setting}
We recall the appropriate subspaces of solenoidal vector fields, defined in the bounded convex polygonal domain~$\Omega$,  involved in the analysis of the Navier--Stokes equations, as well as the Stokes operator and the one-to-one correspondence between solenoidal vector fields and their scalar vorticity. Hence, we recall also the appropriate functional setting to deal with the vorticity equation. Finally, we present a strategy to construct explicitly appropriate families~$U_M$ of actuators and~$\widetilde U_M$ of auxiliary vector functions.   

\subsection{Vector and scalar function spaces}\label{sS:fun-sett}
The Lebesgue space~$H\coloneqq L^2(\Omega)$ is considered as pivot space, $H=H'$. This implies that
$H\times H=H'\times H'$. We shall endow the space~$\bfH\subset H\times H$, in~\eqref{spaceH}, with the scalar product inherited from~$H\times H$, which will imply that~$\bfH=\bfH'$.
Next, by considering the subspace~$\bfV\subset\bfH$ as
\begin{equation}\notag
\bfV=\bfH{\,\textstyle\bigcap\,} (W^{1,2}(\Omega))^2,
\end{equation}
we define the Stokes operator as
 \begin{equation}\notag
\bfA\colon\bfV\to\bfV', \qquad \langle \bfA y,v\rangle_{\bfV',\bfV}\coloneqq(\curl y,\curl v)_{L^2(\Omega)},
\end{equation}
which is a bijection between~$\bfV$ and~$\bfV'$,  with domain
 \begin{equation}\notag
\rmD(\bfA) \coloneqq\{h\in\bfH\mid \bfA h\in\bfH\}=\{h\in \bfH{\,\textstyle\bigcap\,} (W^{2,2}(\Omega))^2\mid (\curl h)\rest{\p\Omega}=0\}.
\end{equation}

We shall pay particular attention to the dynamics of the vorticity function~$w\coloneqq\curl y$, which will satisfy  a semilinear parabolic equation, thus we introduce also the spaces associated
to the Dirichlet Laplacian~$A\colon V\to V'$, 
 \begin{equation}\label{A}
 \langle A w,z\rangle_{V',V}\coloneqq(\nabla w,\nabla z)_{L^2(\Omega)^2},\qquad V\coloneqq W^{1,2}_0(\Omega) \coloneqq\{g\in W^{1,2}_0(\Omega)\mid g\rest\Omega=0\},
\end{equation}
with domain
 \begin{equation}\notag
\rmD(A) \coloneqq\{g\in H\mid A g\in H\}= V{\,\textstyle\bigcap\,} W^{2,2}(\Omega) .
\end{equation}
Hereafter, the spaces above are assumed endowed with the scalar products as follows,
 \begin{subequations}\notag
 \begin{align}
 &(w,z)_H\coloneqq(w,z)_{L^2(\Omega)},&&\quad (w,z)\in H\times H;\\
&(w,z)_V\coloneqq\langle A w,z\rangle_{V',V},&&\quad (w,z)\in V\times V;\\
&(w,z)_{\rmD(A)}\coloneqq(A w, A z)_H&&\quad (w,z)\in \rmD(A)\times\rmD(A);\\
&(y,v)_\bfH\coloneqq(y,v)_{L^2(\Omega)^2},&&\quad (y,v)\in \bfH\times\bfH;\\
&(y,v)_\bfV\coloneqq\langle \bfA y,v\rangle_{\bfV',\bfV},&&\quad (y,v)\in \bfV\times\bfV;\\
&(y,v)_{\rmD(\bfA)}\coloneqq(\bfA y, \bfA v)_\bfH&&\quad (y,v)\in \rmD(\bfA)\times\rmD(\bfA).
\end{align}
\end{subequations}

\begin{lemma}\label{L:isocurl-bfVH}
The mapping $\curl\colon\bfV \to H$ is an isometry and a bijection.
\end{lemma}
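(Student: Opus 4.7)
The plan is to verify the three properties (well-definedness and norm preservation, injectivity, surjectivity) in a sequence that exploits the identification of $\bfV$-elements with their stream functions via the scalar Dirichlet Laplacian on the convex polygonal domain $\Omega$.

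First, I observe that by the very definition of the $\bfV$ scalar product, $\norm{y}{\bfV}^2=(\curl y,\curl y)_{L^2(\Omega)}=\norm{\curl y}{H}^2$, so once we know $(\Bigcdot,\Bigcdot)_\bfV$ is a genuine inner product on $\bfV$ the isometry property is automatic. Hence the first real task is \emph{injectivity} of $\curl$ on $\bfV$, which also justifies that $(\Bigcdot,\Bigcdot)_\bfV$ is positive definite. Suppose $y\in\bfV$ with $\curl y=0$. Since $\diver y=0$ and $\Omega$ is convex, hence simply connected, $y$ can be written as $y=\nabla\phi$ for some $\phi\in W^{2,2}(\Omega)$; then $\Delta\phi=\diver y=0$ and the boundary condition $(y\cdot\bfn)\rest{\p\Omega}=0$ becomes a homogeneous Neumann condition $\tfrac{\p\phi}{\p\bfn}\rest{\p\Omega}=0$. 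By uniqueness (up to constants) of the Neumann problem, $\phi$ is constant, so $y=\nabla\phi=0$.

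For \emph{surjectivity}, given $w\in H=L^2(\Omega)$, I would use a stream-function construction. Solve the Dirichlet problem
\begin{equation}\notag
-\Delta\psi=w\quad\text{in }\Omega,\qquad \psi\rest{\p\Omega}=0,
\end{equation}
which, by the $H^2$-elliptic regularity valid on bounded convex polygonal domains (this is precisely why convexity of $\Omega$ is assumed), admits a unique solution $\psi\in V\cap W^{2,2}(\Omega)=\rmD(A)$. Define $y\coloneqq\nabla^\perp\psi\coloneqq(\tfrac{\p}{\p x_2}\psi,-\tfrac{\p}{\p x_1}\psi)$. Then $y\in(W^{1,2}(\Omega))^2$, $\diver y=0$, and $\curl y=-\Delta\psi=w$. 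The boundary condition $\psi\rest{\p\Omega}=0$ forces the tangential derivative of $\psi$ along $\p\Omega$ to vanish, i.e.\ $\nabla\psi$ is parallel to $\bfn$ on $\p\Omega$; hence $\nabla^\perp\psi$ is tangent to $\p\Omega$, giving $(y\cdot\bfn)\rest{\p\Omega}=0$. Thus $y\in\bfV$ and $\curl y=w$.

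The isometry and bijectivity statement then follows directly: bijectivity from the two steps above, and the norm identity $\norm{\curl y}{H}=\norm{y}{\bfV}$ from the definition of the inner product. The main subtle point in the argument is the \emph{regularity of the stream function}; on a general (non-convex) polygonal domain one would only get $\psi\in H^{1+s}$ for some $s<1$ depending on the largest reentrant angle, which would not suffice to place $y$ in $(W^{1,2}(\Omega))^2$. The convexity assumption on $\Omega$ stated in the introduction is exactly what bypasses this obstacle. A minor point worth mentioning is that the argument for injectivity uses simple connectedness of $\Omega$, which again follows from convexity.
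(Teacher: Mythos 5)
Your proof is correct and self-contained, but it takes a genuinely different route from the paper: the paper simply reads off the isometry from the definition of $(\Bigcdot,\Bigcdot)_\bfV$ and then cites \cite[Thm.~1]{AuchAlex02} for injectivity and surjectivity (remarking only that injectivity uses simple connectedness), whereas you prove both properties by hand. Your injectivity argument (a curl-free, divergence-free field tangent to the boundary is $\nabla\phi$ with $\phi$ harmonic and satisfying a homogeneous Neumann condition, hence constant) and your surjectivity argument (solve the Dirichlet problem for the stream function and rotate its gradient) are both sound, and they have the merit of making explicit exactly where convexity enters (the $W^{2,2}$ elliptic regularity for the Dirichlet Laplacian on a convex polygon, without which $\nabla^\perp\psi$ need not land in $(W^{1,2}(\Omega))^2$) and where simple connectedness enters; the paper's citation buys brevity at the price of hiding these points. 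One minor slip: with your convention $\nabla^\perp\psi\coloneqq(\tfrac{\p}{\p x_2}\psi,-\tfrac{\p}{\p x_1}\psi)$ one computes $\curl(\nabla^\perp\psi)=\Delta\psi=-w$, not $+w$; you should either take $y\coloneqq\curl^*\psi=(-\tfrac{\p}{\p x_2}\psi,\tfrac{\p}{\p x_1}\psi)$, matching the paper's adjoint operator, or solve $\Delta\psi=w$ instead. This sign does not affect surjectivity (as $w\mapsto-w$ is a bijection of $H$), but it should be fixed for consistency with the paper's identity $\curl\curl^*=A$. A further cosmetic remark: your injectivity step can be shortened to $\norm{y}{L^2}^2=\int_\Omega y\cdot\nabla\phi\,\rmd\Omega=-\int_\Omega(\diver y)\phi\,\rmd\Omega+\int_{\p\Omega}(y\cdot\bfn)\phi\,\rmd\Gamma=0$, avoiding any appeal to Neumann uniqueness.
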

\begin{proof}
By definition, we find $(y,v)_\bfV=\langle \bfA y,v\rangle_{\bfV',\bfV}=(\curl y,\curl v)_{H}$, which gives us that~$\curl\colon\bfV \to H$ is an isometry. From~\cite[Thm.~1]{AuchAlex02} it follows the surjectivity and injectivity, the latter property is due to the fact that~$\Omega$ is simply connected.
\end{proof}

We define the stream function~$\psi=\psi_y$ associated with the vector field~$y\in\bfV$ as the solution of the Dirichlet Laplace equation~$A \psi\coloneqq\curl y$, that is,
 \begin{equation}\notag
 \psi_y\coloneqq  A^{-1}\curl y.
\end{equation}

Let us consider the adjoint of the vorticity operator,
\begin{equation}\notag
\curl^*\colon H\to \bfV',\qquad \curl^*f\coloneqq (-\tfrac{\p}{\p x_2} f,\tfrac{\p}{\p x_1}f).
\end{equation}
We find that
 \begin{equation}\notag
 A\rest{\rmD(A)}=\curl\curl^* \quad\mbox{and}\quad  \bfA\rest{\rmD(\bfA)}=\curl^*\curl,
\end{equation}
where the former follows from~$(\nabla w,\nabla z)_{H^2}=(\curl^* w,\curl^* z)_{H^2}$ and the latter from the fact that for vector fields~$y,v$ in~$\rmD(\bfA)$, we have
 \begin{equation}\notag
 \langle \bfA w,z\rangle_{\bfV',\bfV}=(\curl w,\curl z)_{H}=(\curl^*\curl w, z)_{\bfH}.
\end{equation}
Hence, we also find that, for~$y\in\rmD(\bfA)$,
 \begin{align}\notag
&  \bfA \curl^* \psi_y=\curl^*\curl \curl^* \psi_y= \curl^*\curl \curl^* A^{-1}\curl y= \curl^*\curl y
  =\bfA y
\end{align}
which implies~$y=\curl^* \psi_y$, hence we can recover~$y$ from its vorticity as
 \begin{equation}\label{y-ot-curly}
y=\curl^* (A^{-1}(\curl y)).
\end{equation}

\begin{lemma}\label{L:curl-DbfVAbfV}
The mapping $\curl\colon\rmD(\bfA)  \to V$ is a bijection.
\end{lemma}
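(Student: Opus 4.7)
The plan is to combine Lemma~\ref{L:isocurl-bfVH} with the identity $\bfA\rest{\rmD(\bfA)}=\curl^*\curl$ already noted above. The inclusion $\curl(\rmD(\bfA))\subseteq V$ and injectivity are essentially bookkeeping: for $h\in\rmD(\bfA)\subset(W^{2,2}(\Omega))^2$ one has $\curl h\in W^{1,2}(\Omega)$ with $(\curl h)\rest{\p\Omega}=0$, hence $\curl h\in W^{1,2}_0(\Omega)=V$; injectivity is inherited from Lemma~\ref{L:isocurl-bfVH} since $\rmD(\bfA)\subset\bfV$.

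For surjectivity, given $w\in V$, the natural candidate for the preimage comes from solving a Stokes problem. First I would check that $v\coloneqq\curl^* w=(-\tfrac{\p}{\p x_2}w,\tfrac{\p}{\p x_1}w)\in\bfH$. Its distributional divergence vanishes by equality of mixed partials, and the tangency $(v\cdot\bfn)\rest{\p\Omega}=0$, in the sense of normal traces for divergence-free $L^2$ fields, amounts to $\int_\Omega v\cdot\nabla\phi\,\rmd x=0$ for all $\phi\in C^\infty(\overline\Omega)$, which is obtained by a single integration by parts using $w\in W^{1,2}_0(\Omega)$.

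Since $\bfA\colon\bfV\to\bfV'$ is a bijection and $\bfH\hookrightarrow\bfV'$, there is a unique $h\in\bfV$ with $\bfA h=v$, and the inclusion $v\in\bfH$ forces $h\in\rmD(\bfA)$ by the defining property of the domain. Using $\bfA h=\curl^*\curl h$ and $v=\curl^* w$, the equation $\bfA h=v$ becomes $\curl^*(\curl h-w)=0$. From the first step $\curl h\in V$, so $\alpha\coloneqq\curl h-w$ lies in $V$; then $\curl^*\alpha=0$ means $\nabla\alpha=0$, so $\alpha$ is a constant, and the vanishing Dirichlet trace in $V$ gives $\alpha=0$, i.e.~$\curl h=w$.

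The only delicate step I anticipate is the verification $\curl^* w\in\bfH$, specifically the tangency: since $\nabla w$ has no classical trace for a general $w\in W^{1,2}(\Omega)$, the boundary condition must be phrased through the distributional normal trace for divergence-free $L^2$ fields, and it is exactly the $W^{1,2}_0$-membership of $w$ that makes the requisite Green identity close. Everything else is bookkeeping built on the bijectivity of $\bfA$ and the structural identities already in place.
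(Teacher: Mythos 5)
Your proof is correct, but it reaches surjectivity by a different construction than the paper. The paper's proof goes the other way around: given $v\in V\subset H$, it first invokes Lemma~\ref{L:isocurl-bfVH} to produce $h\in\bfV$ with $\curl h=v$ directly, and then only has to upgrade $h$ to $\rmD(\bfA)$ by observing that $\bfA h=\curl^*\curl h=\curl^* v\in\bfH$ and appealing to the definition $\rmD(\bfA)=\{h\in\bfH\mid\bfA h\in\bfH\}$. You instead build the candidate as $h=\bfA^{-1}(\curl^* w)$, which lands in $\rmD(\bfA)$ for free but then obliges you to verify a posteriori that $\curl h=w$; this costs the extra step $\curl^*\alpha=0$, $\alpha\in V$ $\Rightarrow$ $\alpha$ constant $\Rightarrow$ $\alpha=0$ (fine, since $\Omega$ is connected and $\alpha$ has zero trace). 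Both routes hinge on the same two facts --- $\bfA\rest{\rmD(\bfA)}=\curl^*\curl$ and the membership $\curl^* w\in\bfH$ for $w\in V$ --- and on this last point your write-up is actually more careful than the paper's, which asserts $\curl^* v\in\bfH$ without spelling out the vanishing distributional divergence and the normal-trace identity that your $W^{1,2}_0$-membership of $w$ supplies. The paper's ordering is marginally shorter because it never needs the uniqueness step; yours has the mild advantage of exhibiting the inverse explicitly as $w\mapsto\bfA^{-1}\curl^* w$, consistent with the identity~\eqref{y-ot-curly}.
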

\begin{proof}
The injectivity follows form~$\rmD(\bfA)\subset\bfV$ and Lemma~\ref{L:isocurl-bfVH}. Next, for an arbitrary~$v\in V$, by Lemma~\ref{L:isocurl-bfVH} there exists~$h\in\bfV$ such that~$\curl h=v$, thus~$\curl^*\curl h=\curl^*v\in \bfH$. From~$(\curl^*\curl h, g)_\bfH=(\bfA h, g)_{\bfV',\bfV}$ for all~$g\in \bfV$, we obtain~$\curl^*\curl h=\bfA h\in\bfH$, which implies that~$h\in\rmD(\bfA)$. 
\end{proof}

\begin{remark}
Lemmas~\ref{L:isocurl-bfVH} and~\ref{L:curl-DbfVAbfV} can be derived directly from~\cite[Appendix~I, Prop.~1.4]{Temam01} in the case of smooth domains~$\Omega$.
\end{remark}

Finally, recalling the identity~$-\Delta=\curl^*\curl -\nabla\diver$, we see that, under Lions bounbary conditions, the Stokes operator satisfies, for~$y\in\rmD(\bfA)$ and~$v\in L^2(\Omega)^2$,
 \begin{align}\notag
( \bfA y,v)_{L^2(\Omega)^2}&=(\curl^*\curl y, v)_{L^2(\Omega)^2}\notag\\
  &=(\curl^*\curl y, v)_{L^2(\Omega)^2}-(\nabla\diver y, v)_{L^2(\Omega)^2}
=-(\Delta y, v)_{L^2(\Omega)^2}.\notag
\end{align}
Thus, we have that
 \begin{align}\notag
 \bfA y=-\Delta y,\qquad\mbox{for all }y\in\rmD(\bfA).
\end{align}
Recall also that we have, by \cite[Ch.~1, Thm.~1.4]{Temam01} the orthogonal sum
 \begin{equation}\notag
L^2(\Omega)^2=\bfH\oplus\nabla W^{1,2}(\Omega);\qquad\nabla W^{1,2}(\Omega)=\bfH^{\perp L^2(\Omega)^2}. 
\end{equation}
.

\subsection{The actuators}\label{sS:act}
For a fixed~$M$, the actuators shall be constructed having a vorticity given by a translation of a rescaling of a reference function
\begin{equation}\label{ref-w}
\phi\in L^2(\clO),\qquad\phi(\overline x)>0,\qquad \overline x\in\clO\subset \fkB, 
\end{equation}
where~$\clO\subset\bbR^2$ is an open set with Lipschitz boundary~$\p\clO$  and~$\fkB\coloneqq\{x\in\bbR^2\mid\norm{x}{\bbR^2}<1\}$ stands for the unit Euclidean ball in~$\bbR^2$. For simplicity (without loss of generality), let
\begin{equation}\notag
\frac{\int_\clO(x_1,x_2)\,\rmd\clO}{\int_\clO1\,\rmd\clO}=(0,0),
\end{equation}
that is, $(0,0)\in\bbR^2$ is the center of mass of~$\clO$. In particular, we can take~$\clO=\fkB$.

We denote by~$\indf_{\omega}$ the indicator function of an open subset~$\omega\subseteq\Omega\subset\bbR^2$ ,
\begin{equation}\notag
\indf_{\omega}(x)\coloneqq\begin{cases}
1,&\mbox{ if }x\in\omega,\\
0,&\mbox{ if }x\in\Omega\setminus\overline\omega,
\end{cases}
\end{equation}
and, for a constant~$\overline r>0$ and a vector~$\overline c\in\Omega$, we define the function
\begin{subequations}\label{ref-w-loc}
\begin{align}
& \overline \phi_{\overline r,\overline c}\coloneqq \indf_{\clO_{\overline r,\overline c}}(x)\phi(\overline r^{-1}(x-\overline c)),\qquad
\clO_{\overline r,\overline c}\coloneqq\overline c + \overline r\clO,\\
\intertext{and, for the case~$\clO_{\overline r,\overline c}\subseteq\Omega$, the actuator}
&\Phi\coloneqq\curl^*(A^{-1}\overline \phi_{\tfrac{r}{M}, c_j^M}).
\end{align}
\end{subequations}

In a rectangular domain~$\Omega=(0,L_1)\times(0,L_2)$, we fix
\begin{equation}\notag
r\in(0,\tfrac12\underline L),\qquad \underline L\coloneqq\min\{L_1,L_2\}
\end{equation}
and, for each~$M\in\bbN_+$, we set the family of actuators as
\begin{subequations}\label{UM}
\begin{align}
&U_M\coloneqq\{\Phi_j^M\mid 1\le j\le M_\sigma\},\qquad\Phi_{j}^M=\curl^*(A^{-1}\overline \phi_{\tfrac{r}{M}, c_j^M}),\qquad M_\sigma\coloneqq M^2,\\
&c_j^M\coloneqq c_\bfj^M=\left(\tfrac{(2\bfj_1+1)L_1}{2M},\tfrac{(2\bfj_2+1)L_2}{2M}\right),\quad \bfj\in\{1,\dots,M\}\times\{1,\dots,M\},
\end{align}
where~$j\mapsto \bfj=(\bfj_1,\bfj_2)$ is a bijection from~$\{1,\dots,M^2\}$ onto~$\{1,\dots,M\}\times\{1,\dots,M\}$.
\end{subequations}

We see that~$\curl\Phi_j^M=\overline \phi_{\tfrac{r}{M}, c_j^M}$ is supported in the closure~$\overline\omega_j^M$ of~$\omega_j^M$,
\begin{equation}\label{suppAct}
\supp(\curl\Phi_j^M)=\overline\omega_j^M=c_j^M+\tfrac{r}{M}\overline\clO,
\end{equation}
and that~$c_j^M$ is the center of mass of~$\omega_j^M$.
Note that the supports of the actuators are pairwise disjoint, because for~$x\in \omega_j^M$ and~$z\in \omega_i^M$, $\{i,j\}\subset\{1,\dots,M^2\}$, we have
\begin{equation}\notag
x-z=c_j^M- c_i^M+\tfrac{r}{M}w,\quad\mbox{with}\quad w\in\clO-\clO,
\end{equation}
which implies that~$\norm{c_j^M- c_i^M}{\bbR^2}\le\norm{x-z}{\bbR^2}+2\tfrac{r}{M}$, since~$\clO\subset\fkB$.
Now, if~$i\ne j$, by~\eqref{UM}, we have that~$\norm{c_j^M- c_i^M}{\bbR^2}\ge\tfrac{3}{2M}\underline L$,
which 
gives us~$\norm{x-z}{\bbR^2}\ge\tfrac{1}{2M}(3\underline L-4r)>0$, for all~$(x,y)\in \omega_j^M\times \omega_i^M$, that is,~$\omega_j^M\bigcap \omega_i^M=\emptyset$, if~$j\ne i$.
 Therefore, we find
\begin{equation}\label{total_vol}
\vol\left({\textstyle\bigcup\limits_{j=1}^{M^2}}\omega_j^M\right)=M^2 \vol(\tfrac{r}{M}\clO)=r^2\vol(\clO)
\end{equation}
for the total volume covered by the actuators, which is independent of~$M$.

The choice of the function~$\phi$ in~\eqref{ref-w} is at our disposal. Once we have chosen~$\phi$, the construction above will give us a stabilizing family of actuators, for large enough~$M$. 

By a technical reason we need a set of  auxiliary vector functions~$\widetilde U_M$, which we construct as above, but with a more regular reference function as
\begin{equation}\label{ref-tildew}
\widetilde \phi\in W^{1,2}_0(\clO),\qquad\widetilde\phi(\overline x)>0,\qquad \overline x\in\clO\subset \fkB,
\end{equation}
thus arriving at the analogue of~\eqref{ref-w-loc}, 
\begin{subequations}\label{ref-w-loc-aux}
\begin{align}
& \overline {\widetilde \phi}_{\overline r,\overline c}\coloneqq \indf_{\clO_{\overline r,\overline c}}(x){\widetilde \phi}(\overline r^{-1}(x-\overline c)),\qquad
\clO_{\overline r,\overline c}\coloneqq\overline c + \overline r\clO,\\
\intertext{and, for~$\clO_{\overline r,\overline c}\subseteq\Omega$, the auxiliary function}
&\widetilde\Phi\coloneqq\curl^*(A^{-1}\overline {\widetilde \phi}_{\tfrac{r}{M}, c_j^M})
\end{align}
\end{subequations}

For a rectangular domain we obtain the analogue of~\eqref{UM},
\begin{align}\label{tildeUM}
&\widetilde U_M\coloneqq\{\widetilde \Phi_j^M\mid 1\le j\le M_\sigma\},\qquad\widetilde \Phi_{j}^M=\curl^*(A^{-1}\overline {\widetilde \phi}_{\tfrac{r}{M}, c_j^M}),
\end{align}
again with~$M_\sigma= M^2$ the~$c_j^M$ as in~\eqref{UM}. 

Fig.~\ref{fig.suppActRect} illustrates the location of the actuators in a rectangular domain; see~\cite[Fig.~1]{KunRodWal21}.
The key point is that the configuration for~$M>1$ corresponds to rescaled copies of the configuration for~$M=1$; one of such copies
is highlighted in Fig.~\ref{fig.suppActRect} by a dashed-border rectangle. This is possible because a rectangle can be decomposed into~$M^2$ similar rescaled copies of itself.
A triangle can also be (up to a rotation) decomposed into~$4^{M-1}$ similar rescaled copies of itself, see Fig.~\ref{fig.suppActTri} for an illustration; with a rotated (by 180 degrees) copy of the configuration for~$M=1$ highlighted by a dashed-border triangle. Hence, after a triangulation, the strategy can be applied to polygonal domains.


\setlength{\unitlength}{.0018\textwidth}
\newsavebox{\Rectfw}%
\savebox{\Rectfw}(0,0){%
\linethickness{3pt}
{\color{black}\polygon(0,0)(120,0)(120,80)(0,80)(0,0)}%
}%
\newsavebox{\Rectref}%
\savebox{\Rectref}(0,0){%
{\color{white}\polygon*(0,0)(120,0)(120,80)(0,80)(0,0)}%
{\color{lightgray}\polygon*(45,25)(75,25)(75,55)(45,55)(45,25)}%
}%


\begin{figure}[h!]
\begin{center}
\begin{picture}(500,100)
\put(0,0){\usebox{\Rectfw}}%
\put(0,0){\usebox{\Rectref}}
 \put(190,0){\usebox{\Rectfw}}
 \put(190,0){\scalebox{.5}{\usebox{\Rectref}}}
 \put(250,0){\scalebox{.5}{\usebox{\Rectref}}}
 \put(250,40){\scalebox{.5}{\usebox{\Rectref}}}
 \put(190,40){\scalebox{.5}{\usebox{\Rectref}}}
 \put(380,0){\usebox{\Rectfw}}
 \put(380,0){\scalebox{.3333}{\usebox{\Rectref}}}
\put(420,0){\scalebox{.3333}{\usebox{\Rectref}}}
\put(460,0){\scalebox{.3333}{\usebox{\Rectref}}}
\put(380,26.6666){\scalebox{.3333}{\usebox{\Rectref}}}
\put(420,26.6666){\scalebox{.3333}{\usebox{\Rectref}}}
\put(460,26.6666){\scalebox{.3333}{\usebox{\Rectref}}}
\put(380,53.3333){\scalebox{.3333}{\usebox{\Rectref}}}
\put(420,53.3333){\scalebox{.3333}{\usebox{\Rectref}}}
\put(460,53.3333){\scalebox{.3333}{\usebox{\Rectref}}}
\put(40,85){$M=1$}
\put(230,85){$M=2$}
\put(420,85){$M=3$}

\linethickness{1.5pt}%
{\color{blue}%

\Dashline(250,0)(310,0){4}%
\Dashline(310,0)(310,40){4}%
\Dashline(310,40)(250,40){4}%
\Dashline(250,40)(250,0){4}%

\Dashline(460,0)(500,0){4}%
\Dashline(500,0)(500,26.6666){4}%
\Dashline(500,26.6666)(460,26.6666){4}%
\Dashline(460,26.6666)(460,0){4}%
}
\end{picture}
\end{center}
\caption{Supports~$\overline\omega_j^M$ of the actuators as in~\eqref{suppAct}. $M_\sigma=M^2$.}
\label{fig.suppActRect}
\end{figure}


\setlength{\unitlength}{.0018\textwidth}
\newsavebox{\Trifw}%
\savebox{\Trifw}(0,0){%
\linethickness{3pt}
{\color{black}\polygon(0,0)(120,0)(40,80)(0,0)}%
}%

\newsavebox{\Triref}%
\savebox{\Triref}(0,0){%
{\color{white}\polygon*(0,0)(120,0)(40,80)(0,0)}%
{\color{lightgray}\polygon*(30,10)(50,10)(50,30)(30,30)(30,10)}%
}%

\newsavebox{\rTriref}%
\savebox{\rTriref}(0,0){\rotatebox{180}{\usebox{\Triref}}%
}%


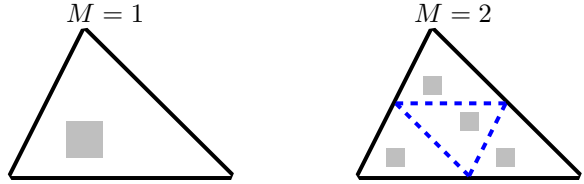
\begin{figure}[h!]
\begin{center}
\begin{picture}(300,100)
\put(0,0){\usebox{\Trifw}}%
\put(0,0){\usebox{\Triref}}
 \put(190,0){\usebox{\Trifw}}
 \put(190,0){\scalebox{.5}{\usebox{\Triref}}}
  \put(250,0){\scalebox{.5}{\usebox{\Triref}}}
 \put(210,40){\scalebox{.5}{\usebox{\Triref}}}
 \put(270,40){\scalebox{.5}{\usebox{\rTriref}}}
\put(30,85){$M=1$}
\put(220,85){$M=2$}

\linethickness{1.5pt}%
{\color{blue}%

\Dashline(270,40)(210,40){4}%
\Dashline(210,40)(250,0){4}%
\Dashline(250,0)(270,40){4}%
}
\end{picture}
\end{center}
\caption{Supports of  actuators for a triangular (sub)domain. $M_\sigma=4^{M-1}$. }
\label{fig.suppActTri}
\end{figure}

\section{The vorticity equation}\label{S:vort}
We write the fluid velocity  stabilizability result, stated at  the end of Section~\ref{sS:intro-main-res}, as a vorticity stabilizability result so that the former will follow from the latter.

We explore the identity~\eqref{y-ot-curly} which gives us a one-to-one correspondence between the velocity field~$y$ and its vorticity~$w=w_y\coloneqq\curl y$. From~\eqref{sys-NS-control} and the discussion in Section~\ref{sS:fun-sett}, we find 
\begin{align}\notag
 &\tfrac{\p}{\p t} w -  \nu\Delta w  +\curl(\langle  y\cdot\nabla\rangle  y) =\curl f+\sum_{j=1}^{M}u_j\curl\Phi_j.
\end{align} 
For the nonlinear term, by direct computations, we obtain
\begin{align}
\curl(\langle  y\cdot\nabla\rangle  y) &=\tfrac{\p}{\p x_2}( y\cdot\nabla  y_1)-\tfrac{\p}{\p x_1}( y\cdot\nabla  y_2)=y\cdot\nabla w+(\tfrac{\p}{\p x_2} y)\cdot\nabla  y_1-(\tfrac{\p}{\p x_1}y)\cdot\nabla  y_2\notag\\
&=y\cdot\nabla w+(\tfrac{\p}{\p x_2} y_1\tfrac{\p}{\p x_1} y_1+\tfrac{\p}{\p x_2} y_2\tfrac{\p}{\p x_2} y_1)-(\tfrac{\p}{\p x_1} y_1\tfrac{\p}{\p x_1} y_2+\tfrac{\p}{\p x_1} y_2\tfrac{\p}{\p x_2} y_2)\notag\\
&=y\cdot\nabla w+w\tfrac{\p}{\p x_1} y_1 +w\tfrac{\p}{\p x_2} y_2=y\cdot\nabla w+w\diver y,\notag
\end{align} 
and since~$\diver y=0$, using~\eqref{y-ot-curly}, we arrive at
\begin{align}\notag
\curl(\langle  y\cdot\nabla\rangle  y) =y\cdot\nabla w=\curl^* (A^{-1}w)\cdot\nabla w
\end{align}
and, with~$w(t)\coloneqq \curl y(t)$ and~$w_0\coloneqq\curl y_0$, we arrive at the vorticity dynamical system
\begin{align}\label{sys-vort-control}
 &\dot w +  \nu A w  +\curl^* (A^{-1}w)\cdot\nabla w =\curl f+\sum_{j=1}^{M}u_j\curl\Phi_j,\qquad w(0)= w_0.
\end{align}

Recall that we are looking for an  input~$u=u(t)$, such that the solution~$y$ of~\eqref{sys-NS-control} converges exponential to the given solution~$y_\ttt$ of~\eqref{sys-NS}. The vorticity~$w_{\ttt}(t)\coloneqq \curl y_\ttt(t)$ of the latter solves, with~$w_{\ttt0}\coloneqq\curl y_{\ttt0}$,
  \begin{align}\label{sys-vort}
 &\dot w_\ttt + \nu A w_\ttt  +\curl^* (A^{-1}w_\ttt)\cdot\nabla w_\ttt =\curl f,\qquad  
w_\ttt(0)=w_{\ttt0},
\end{align}
Again due to the identity~\eqref{y-ot-curly}, we can rewrite our goal in terms of the vorticities, namely, we want that~$w(t)$ converges exponentially to the given target~$w_\ttt(t)$ as~$t\to+\infty$,
\begin{equation}\label{goal-exp-vort}
\begin{split}
&\norm{w(t)-w_\ttt(t)}{H}\le C\rme^{-\mu (t-s)}\norm{w(s)-w_\ttt(s)}{H},\\
&\mbox{for all }t\ge s\ge 0\mbox{ and all }w_0-w_{\ttt0}\in H.
\end{split}
\end{equation}

The vorticity of the actuators and auxiliary functions in~\eqref{UM} and~\eqref{tildeUM}  are
\begin{subequations}\label{Act-Aux-vort}
 \begin{align}
 V_M&\coloneqq\curl U_M=\{\varphi_j^M\mid 1\le j\le M_\sigma\}\subset H,\qquad \varphi_j^M\coloneqq\curl \Phi_j^M=\overline {\phi}_{\tfrac{r}{M}, c_j^M}
\\
\widetilde V_M&\coloneqq\curl \widetilde U_M\coloneqq\{\widetilde\varphi_j^M\mid 1\le j\le M_\sigma\}\subset V,\qquad \widetilde\varphi_j^M\coloneqq\curl  \widetilde\Phi_j^M=\overline {\widetilde \phi}_{\tfrac{r}{M}, c_j^M},
 \end{align}
  \end{subequations}
  with~$M_\sigma$ actuators with vorticities~\eqref{ref-w-loc}, located as in Figs.~\ref{fig.suppActRect} and~\ref{fig.suppActTri}, with the reference functions~${\phi}\in L^2(\clO)$ as in~\eqref{ref-w}, and with auxiliary functions with vorticities as in~\eqref{ref-w-loc-aux} with the reference functions~${\widetilde \phi}\in W^{1,2}_0(\clO)$ as in~\eqref{ref-tildew}.

\subsection{Feedback operator in terms of the vorticity}
We show that in terms of the vorticity, the operator~\eqref{FeedKy2} can be written using oblique projections again, namely, as
  \begin{align}\label{FeedKy2-vort}
 K\in\clL(H,\bbR^M),\qquad K = K_M^\lambda \coloneqq- \lambda (V_M^\diamond)^{-1}P_{\clV_{M}}^{\widetilde\clV_{M}^{\perp H}}A P_{\widetilde\clV_{M}}^{\clV_{M}^{\perp H}},
\end{align} 
where~$P_F^{G^{\perp H}}$ stand for the oblique projection in~$H$ onto~$F$ along~$G^{\perp H}$.

System~\eqref{sys-vort-control}, with the feedback control input reads
\begin{align}\label{sys-vort-K}
 &\dot w +  \nu A w  +\curl^* (A^{-1}w)\cdot\nabla w =\curl f+V_M^\diamond K_M^\lambda (w-w_\ttt),\qquad w(0)= w_0.
\end{align}

That is, the main goal of this section is to show the following result.
  \begin{theorem}\label{T:K=bfKcurl}
   The feedback operators~\eqref{FeedKy2} and~\eqref{FeedKy2-vort} satisfy
$K_M^\lambda \curl =\bfK_M^\lambda$.
  \end{theorem}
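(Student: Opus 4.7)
The plan is to use the isometric bijection $\curl\colon\bfV\to H$ of Lemma~\ref{L:isocurl-bfVH}, together with its strengthening Lemma~\ref{L:curl-DbfVAbfV}, to transport each of the four factors in the definition~\eqref{FeedKy2} of $\bfK_M^\lambda$ into the matching factor of $K_M^\lambda$ in~\eqref{FeedKy2-vort}. By the construction in~\eqref{Act-Aux-vort}, $\curl$ restricts to bijections $\clU_M\leftrightarrow\clV_M$ and $\widetilde\clU_M\leftrightarrow\widetilde\clV_M$; in particular $V_M^\diamond=\curl\circ U_M^\diamond$, whence $(U_M^\diamond)^{-1}=(V_M^\diamond)^{-1}\circ\curl$ as maps $\clU_M\to\bbR^{M_\sigma}$.

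The main step is to intertwine the two pairs of oblique projections,
\begin{equation*}
\curl\circ\bfP_{\widetilde\clU_M}^{\clU_M^{\perp\bfV}}=P_{\widetilde\clV_M}^{\clV_M^{\perp H}}\circ\curl,\qquad\curl\circ\bfP_{\clU_M}^{\widetilde\clU_M^{\perp\bfV}}=P_{\clV_M}^{\widetilde\clV_M^{\perp H}}\circ\curl.
\end{equation*}
For the first, let $y\in\bfV$ and $\bfq\coloneqq\bfP_{\widetilde\clU_M}^{\clU_M^{\perp\bfV}}y$. The identity $(y-\bfq,\Phi)_\bfV=(\curl(y-\bfq),\curl\Phi)_H$, which is the very definition of the $\bfV$-inner product, converts the condition $y-\bfq\in\clU_M^{\perp\bfV}$ into $\curl y-\curl\bfq\in\clV_M^{\perp H}$; together with $\curl\bfq\in\widetilde\clV_M$, the uniqueness of the oblique projection in $H$ (whose well-posedness $\clV_M\oplus\widetilde\clV_M^{\perp H}=H$ is inherited from its $\bfV$-analogue via the $\curl$-isometry) yields $\curl\bfq=P_{\widetilde\clV_M}^{\clV_M^{\perp H}}\curl y$. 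Exchanging the roles of $\clU_M\leftrightarrow\widetilde\clU_M$ and $\clV_M\leftrightarrow\widetilde\clV_M$ proves the second identity.

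Combining these with the factorizations $\bfA\rest{\rmD(\bfA)}=\curl^*\curl$ and $A\rest{\rmD(A)}=\curl\curl^*$ from Section~\ref{sS:fun-sett} (which together yield $\curl\bfA y=\curl\curl^*\curl y=A\curl y$ on $\rmD(\bfA)$, with $\curl y\in V$ thanks to Lemma~\ref{L:curl-DbfVAbfV}), and applying the chain at $\bfq=\bfP_{\widetilde\clU_M}^{\clU_M^{\perp\bfV}}z\in\widetilde\clU_M\subset\rmD(\bfA)$, we obtain for every $z\in\bfV$,
\begin{align*}
\bfK_M^\lambda z
&=-\lambda(U_M^\diamond)^{-1}\bfP_{\clU_M}^{\widetilde\clU_M^{\perp\bfV}}\bfA\bfP_{\widetilde\clU_M}^{\clU_M^{\perp\bfV}}z\\
&=-\lambda(V_M^\diamond)^{-1}\curl\bfP_{\clU_M}^{\widetilde\clU_M^{\perp\bfV}}\bfA\bfP_{\widetilde\clU_M}^{\clU_M^{\perp\bfV}}z\\
&=-\lambda(V_M^\diamond)^{-1}P_{\clV_M}^{\widetilde\clV_M^{\perp H}}\curl\bfA\bfP_{\widetilde\clU_M}^{\clU_M^{\perp\bfV}}z\\
&=-\lambda(V_M^\diamond)^{-1}P_{\clV_M}^{\widetilde\clV_M^{\perp H}}A\curl\bfP_{\widetilde\clU_M}^{\clU_M^{\perp\bfV}}z\\
&=-\lambda(V_M^\diamond)^{-1}P_{\clV_M}^{\widetilde\clV_M^{\perp H}}A\,P_{\widetilde\clV_M}^{\clV_M^{\perp H}}\curl z=K_M^\lambda\curl z,
\end{align*}
which is the claimed identity.

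The only subtlety requiring care is the bookkeeping of the extensions of the outer projections beyond their natural domains: $\bfA$ and $A$ a priori take values in $\bfV'$ and $V'$, whereas $\bfP_{\clU_M}^{\widetilde\clU_M^{\perp\bfV}}$ and $P_{\clV_M}^{\widetilde\clV_M^{\perp H}}$ are originally defined on $\bfV$ and $H$. The regularity $\widetilde\phi\in W^{1,2}_0(\clO)$ built into~\eqref{ref-tildew} guarantees $\widetilde\clU_M\subset\rmD(\bfA)$ and $\widetilde\clV_M\subset V$, and the $\curl$-isometry between the $\bfV$- and $H$-inner products is exactly what makes the two natural extensions $\curl$-compatible, so that each step of the chain above closes rigorously.
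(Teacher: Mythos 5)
Your proof is correct and follows essentially the same route as the paper's: transport the oblique projections through the isometric bijection $\curl\colon\bfV\to H$ (the paper packages this as Lemmas~\ref{L:curl-orth} and~\ref{L:bfP-P}, while you verify the specific intertwining identities directly from the uniqueness of the oblique decomposition) and then swap $\curl\,\bfA=\curl\,\curl^*\curl=A\,\curl$ on $\rmD(\bfA)$. Your explicit check that $\widetilde\clU_M\subset\rmD(\bfA)$, justifying the use of the factorization $\bfA\rest{\rmD(\bfA)}=\curl^*\curl$ on the range of the inner projection, is a point the paper leaves implicit.
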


We need to derive auxiliry results on the relations between the projections~$\bfP_\bfF^{\bfG^{\perp\bfV}}$ in the space~$\bfV$ of vector fields, used in~\eqref{FeedKy2}, and the projections~$P_F^{G^{\perp H}}$ in the pivot space~$H$ of scalar vorticity functions, used in~\eqref{FeedKy2-vort}.

  \begin{lemma}\label{L:curl-orth} 
Given a subset~$\clF\subset\bfV$, we have the relation
$
\curl (\clF^{\perp\bfV})=(\curl \clF)^{\perp H}.
$
  \end{lemma}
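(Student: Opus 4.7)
The plan is to derive both inclusions by pulling back the orthogonality relation through the isometric bijection $\curl\colon\bfV\to H$ from Lemma~\ref{L:isocurl-bfVH}. That lemma gives the key identity $(y,v)_\bfV=(\curl y,\curl v)_H$ for all $y,v\in\bfV$, which directly translates $\bfV$-orthogonality on one side into $H$-orthogonality on the other.

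First I would prove the inclusion $\curl(\clF^{\perp\bfV})\subseteq(\curl\clF)^{\perp H}$. Take an arbitrary $y\in\clF^{\perp\bfV}$. For each $f\in\clF$ we have $(\curl y,\curl f)_H=(y,f)_\bfV=0$ by the isometry property, so $\curl y$ is $H$-orthogonal to every element of $\curl\clF$, i.e., $\curl y\in(\curl\clF)^{\perp H}$. This gives one direction without using surjectivity.

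For the reverse inclusion $(\curl\clF)^{\perp H}\subseteq\curl(\clF^{\perp\bfV})$, let $w\in(\curl\clF)^{\perp H}$. Since $\curl\colon\bfV\to H$ is a bijection, there is a unique $y\in\bfV$ with $\curl y=w$. For every $f\in\clF$, applying the isometry again yields $(y,f)_\bfV=(\curl y,\curl f)_H=(w,\curl f)_H=0$, so $y\in\clF^{\perp\bfV}$ and therefore $w=\curl y\in\curl(\clF^{\perp\bfV})$.

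No step is really an obstacle here; the lemma is essentially a transport-of-structure statement and both inclusions are routine once Lemma~\ref{L:isocurl-bfVH} is in hand. The only point worth being careful about is that one does not need $\clF$ to be closed or even a subspace: the definitions of $\clF^{\perp\bfV}$ and $(\curl\clF)^{\perp H}$ make sense for arbitrary subsets, and the isometric bijection argument above uses nothing beyond pointwise evaluation of the inner products on elements of $\clF$ and $\curl\clF$.
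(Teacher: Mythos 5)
Your proof is correct and follows essentially the same route as the paper: the first inclusion is verbatim the paper's argument, and for the reverse inclusion the paper merely makes your appeal to bijectivity explicit by writing the preimage as $\psi\coloneqq\curl^*A^{-1}g$ and checking $(\psi,r)_\bfV=(g,\curl r)_H$, which is the same computation. Your closing remark that $\clF$ need not be closed or a subspace is a fair observation consistent with the lemma's statement for subsets.
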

\begin{proof}
For every~$h\in\clF^{\perp\bfV}$ and every~$f\in\clF$, we have~$(\curl h,\curl f)_H=(h, f)_\bfV=0$, which gives us~$\curl (\clF^{\perp\bfV})\subseteq(\curl \clF)^{\perp H}$. Next, for every~$g\in(\curl \clF)^{\perp H}$ and every~$r\in\clF$, we have that~$(\curl^* A^{-1} g, r)_\bfV=(g,\curl r)_H=0$, which gives us~$\psi\coloneqq\curl^* A^{-1} g\in\clF^{\perp\bfV}$ and
$g=\curl\psi$. Hence,~$\curl (\clF^{\perp\bfV})\supseteq (\curl \clF)^{\perp H}$.
  \end{proof}
   \begin{lemma}\label{L:bfP-P}
   Let~$\clF$ and~$\clG$ be closed subspaces of~$\bfV$ such that~$\bfV=\clF\oplus \clG$. Then
   \begin{equation}\notag
   \curl\bfP_{\clF}^{\clG}=P_{\curl\clF}^{\curl\clG}\curl.
   \end{equation}
   \end{lemma}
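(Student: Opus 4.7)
The plan is to reduce the identity to the elementary fact that $\curl\colon\bfV\to H$ is a linear bijection (by Lemma~\ref{L:isocurl-bfVH}), so that direct-sum decompositions in $\bfV$ are transported, one-to-one, to direct-sum decompositions in $H$. Oblique projections are characterized by their range and kernel; once we show that the right-hand side projection has the expected range and kernel, intertwining with $\curl$ is automatic.

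First I would verify that $H=\curl\clF\oplus\curl\clG$, so that the oblique projection $P_{\curl\clF}^{\curl\clG}$ is well defined. Closedness of $\curl\clF$ and $\curl\clG$ in $H$ follows from the isometry property in Lemma~\ref{L:isocurl-bfVH} applied to the closed subspaces $\clF,\clG\subset\bfV$. The identities
\[
\curl\clF\cap\curl\clG=\curl(\clF\cap\clG)=\curl\{0\}=\{0\},\qquad \curl\clF+\curl\clG=\curl(\clF+\clG)=\curl\bfV=H,
\]
follow at once from injectivity and surjectivity of $\curl\colon\bfV\to H$.

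Next, given an arbitrary $y\in\bfV$, the hypothesis $\bfV=\clF\oplus\clG$ gives a unique decomposition $y=f+g$ with $f\in\clF$, $g\in\clG$, and $\bfP_{\clF}^{\clG}y=f$ by definition. Applying $\curl$ yields $\curl y=\curl f+\curl g$ with $\curl f\in\curl\clF$ and $\curl g\in\curl\clG$. Since this is the (unique) decomposition of $\curl y$ along $H=\curl\clF\oplus\curl\clG$, we conclude
\[
P_{\curl\clF}^{\curl\clG}(\curl y)=\curl f=\curl\bfP_{\clF}^{\clG}y,
\]
which is the desired identity $\curl\bfP_{\clF}^{\clG}=P_{\curl\clF}^{\curl\clG}\curl$.

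There is no real obstacle here: the result is essentially a restatement of the fact that $\curl$ is an isomorphism between $\bfV$ and $H$ and therefore commutes with the functorial construction of oblique projections. The only mild subtlety is checking closedness of the images so that the projection on the right is a bounded operator; this is handled by the isometry statement of Lemma~\ref{L:isocurl-bfVH}. Note that the companion identity needed in~\eqref{FeedKy2-vort}, where $\clG=\widetilde\clU_M^{\perp\bfV}$ (respectively $\clU_M^{\perp\bfV}$), then follows by combining the above with Lemma~\ref{L:curl-orth}, since $\curl\bigl(\widetilde\clU_M^{\perp\bfV}\bigr)=(\curl\widetilde\clU_M)^{\perp H}=\widetilde\clV_M^{\perp H}$.
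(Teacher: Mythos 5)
Your proposal is correct and follows essentially the same route as the paper: decompose $z=f+g$ along $\bfV=\clF\oplus\clG$, apply $\curl$, and verify that $H=\curl\clF\oplus\curl\clG$ so that the image decomposition is the unique one defining $P_{\curl\clF}^{\curl\clG}$. The only cosmetic difference is that you obtain $\curl\clF\cap\curl\clG=\{0\}$ directly from the injectivity of $\curl$ (Lemma~\ref{L:isocurl-bfVH}), whereas the paper routes the same fact through the injectivity of $\bfA=\curl^*\curl$; your version is, if anything, slightly more direct.
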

 \begin{proof}
 Let~$z\in\bfV$ be arbitrary, which we decompose into oblique components as
 \begin{equation}\notag
 z=f+g,\qquad f\coloneqq\bfP_{\clF}^{\clG}z,\quad g\coloneqq\bfP_{\clG}^{\clF}z.
  \end{equation}
  We find that
   \begin{equation}\notag
\curl z=\curl f+\curl g,\qquad \curl f\in\curl  \clF,\quad\curl g\in\curl \clG.
  \end{equation}
Note that~$H=\curl \bfV=\curl(\clF\oplus \clG)=\curl\clF+ \curl\clG$. Finally, note that if~$v\in\curl  \clF\bigcap \curl  \clG$, then there exists~$(f_0,g_0)\in\clF\times\clG$ such that~$v=\curl f_0=\curl g_0$ and we find~$\bfA(f_0-g_0)=\curl^*\curl  (f_0-g_0)=0$ and the injectivity of~$\bfA$ implies that~$f_0=g_0\in\clF\bigcap \clG$. Thus,
  necessarily~$f_0=0$ and~$v=\curl f_0=0$. That is, ~$\curl\clF\bigcap \curl\clG=\{0\}$. We can conclude that
   \begin{equation}\notag
\curl \bfP_{\clF}^{\clG}z=\curl f=P_{\curl\clF}^{\curl\clG}\curl z,\quad\mbox{and}\quad
\curl \bfP_{\clG}^{\clF}z=\curl g=P_{\curl\clG}^{\curl\clF}\curl z,
  \end{equation}
which finishes the proof.
  \end{proof}

 \begin{proof}[Proof of Theorem~\ref{T:K=bfKcurl}]
 For an arbitrary~$z\in\bfV$, we find
 \begin{align}
 -\bfK_M^\lambda z&=  \lambda (U_M^\diamond)^{-1}\bfP_{\clU_{M}}^{\widetilde\clU_{M}^\perp}\bfA \bfP_{\widetilde\clU_{M}}^{\clU_{M}^\perp}z=  \lambda (U_M^\diamond)^{-1}\bfP_{\clU_{M}}^{\widetilde\clU_{M}^\perp}\curl^*\curl \bfP_{\widetilde\clU_{M}}^{\clU_{M}^\perp}z\notag\\
 &=  \lambda (V_M^\diamond)^{-1}\curl\bfP_{\clU_{M}}^{\widetilde\clU_{M}^\perp}\curl^*\curl \bfP_{\widetilde\clU_{M}}^{\clU_{M}^\perp}z\notag
 \end{align}
and, by Lemmas~\ref{L:curl-orth} and~\ref{L:bfP-P} ,
  \begin{align}
- \bfK_M^\lambda z 
 &=  \lambda (V_M^\diamond)^{-1} P_{\clV_{M}}^{\widetilde\clV_{M}^\perp}\curl\curl^* P_{\widetilde\clV_{M}}^{\clV_{M}^\perp}\curl z=  \lambda (V_M^\diamond)^{-1} P_{\clV_{M}}^{\widetilde\clV_{M}^\perp} A P_{\widetilde\clV_{M}}^{\clV_{M}^\perp}\curl z,\notag
 \end{align}
which gives us~$\bfK_M^\lambda z=K_M^\lambda\curl z$, recalling~\eqref{FeedKy2-vort}.
  \end{proof}

\subsection{Main result in vorticity formulation}
 In terms of the vorticity, the main result of this manuscript reads as follows.
 \begin{theorem}\label{T:main-vort}
 Let Assumption~\ref{A:bddwtar} hold true. Given~$\mu>0$, there exist~$M_*=M_*(\mu,C_\ttt)\in\bbN_+$ and~$\lambda_*=\lambda_*(\mu,C_\ttt)>0$ such  that the solutions of~\eqref{sys-vort} and~\eqref{sys-vort-K} satisfy~\eqref{goal-exp-vort}, for every~$M\ge M_*$ and~$\lambda\ge\lambda_*$.  Furthermore, $M_*$ and~$\lambda_*$ can be chosen independently.
 \end{theorem}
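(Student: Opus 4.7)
The strategy is to write the vorticity difference $z\coloneqq w-w_\ttt$ as a semilinear parabolic equation, apply the abstract feedback stabilization of~\cite{KunRodWal21} for a semiglobal result, and then use a specific cancellation in the 2D vorticity equation to remove the dependence on the size of the initial data. Subtracting~\eqref{sys-vort} from~\eqref{sys-vort-K} and rearranging the difference of convective terms gives
\begin{equation}\notag
\dot z+\nu A z+\clN(z,w_\ttt)=V_M^\diamond K_M^\lambda z,\qquad z(0)=w_0-w_{\ttt0},
\end{equation}
with
\begin{equation}\notag
\clN(z,w_\ttt)\coloneqq \curl^*(A^{-1}z)\cdot\nabla z+\curl^*(A^{-1}z)\cdot\nabla w_\ttt+\curl^*(A^{-1}w_\ttt)\cdot\nabla z.
\end{equation}
Both vector fields $\curl^*(A^{-1}z)$ and $\curl^*(A^{-1}w_\ttt)$ are divergence-free and tangent to $\partial\Omega$ (tangency follows from $A^{-1}g$ vanishing on $\partial\Omega$), so integration by parts yields
\begin{equation}\notag
(\curl^*(A^{-1}z)\cdot\nabla z,z)_H=0=(\curl^*(A^{-1}w_\ttt)\cdot\nabla z,z)_H,
\end{equation}
leaving only the cross term $(\curl^*(A^{-1}z)\cdot\nabla w_\ttt,z)_H$ in an $H$-energy identity.

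With the $z$-equation in hand, I would first invoke the abstract framework of~\cite{KunRodWal21} to obtain the \emph{semiglobal} version of~\eqref{goal-exp-vort}. This requires verifying that $\clN$ is a locally Lipschitz map of class $\clC_{\rm b,i}$ between the appropriate Gelfand triple built from $H,V,V'$, which I would establish via 2D Sobolev embeddings, H\"older, Young, a 2D Agmon-type estimate $\|\curl^*(A^{-1}z)\|_{L^\infty}\le C\|z\|_H^{1/2}\|z\|_V^{1/2}$, and the bound $\|w_\ttt\|_{L^\infty(W^{1,2})}\le C_\ttt$ from Assumption~\ref{A:bddwtar}. For the linear part, the oblique-projection feedback $V_M^\diamond K_M^\lambda$ stabilizes the associated linear problem at rate $\mu$ for $M,\lambda$ large enough, thanks to a Poincar\'e-type inequality on $\clV_M^{\perp H}$ whose constant scales like $M^2$ because of the $r/M$ scaling of the actuator supports in~\eqref{suppAct}. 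Together, \cite[Thm.~3.1]{KunRodWal21} delivers~\eqref{goal-exp-vort} under the restriction $\|z(0)\|_H\le R$, with thresholds $M_\bullet(\mu,C_\ttt,R)$, $\lambda_\bullet(\mu,C_\ttt,R)$.

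The $R$-dependence is removed by exploiting the cancellation. Testing the $z$-equation against $z$ in $H$, only the cross term of $\clN$ survives and is bounded by
\begin{equation}\notag
|(\curl^*(A^{-1}z)\cdot\nabla w_\ttt,z)_H|\le C_\ttt\|z\|_V\|z\|_H\le\tfrac{\nu}{2}\|z\|_V^2+\tfrac{C_\ttt^2}{2\nu}\|z\|_H^2,
\end{equation}
so the nonlinear contribution is a globally bounded perturbation linear in $\|z\|_H^2$, independent of $\|z(0)\|_H$. Choosing the feedback large enough that its coercivity on $H$ dominates $\mu+\tfrac{C_\ttt^2}{2\nu}$ then yields~\eqref{goal-exp-vort} globally with thresholds $M_*=M_*(\mu,C_\ttt)$ and $\lambda_*=\lambda_*(\mu,C_\ttt)$ depending only on $\mu$ and $C_\ttt$.

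The main obstacle, and the feature that distinguishes this result from the standard~\cite{KunRodWal21} setting, is the last assertion that $M_*$ and $\lambda_*$ may be chosen \emph{independently}. This is proved by refining the feedback coercivity estimate in the oblique splitting: writing $z=P_{\widetilde\clV_M}^{\clV_M^{\perp H}}z+P_{\clV_M^{\perp H}}^{\widetilde\clV_M}z$, the dissipation $\nu A$ alone provides decay at rate $\sim\nu c M^2$ on the $\clV_M^{\perp H}$-component (independently of $\lambda$), whereas the feedback $V_M^\diamond K_M^\lambda$ provides decay at rate $\sim\lambda$ on the $\widetilde\clV_M$-component (independently of $M$, once $M$ is above the threshold that keeps the oblique projections uniformly bounded). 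The cross-coupling between the two components is controlled via the angle between $\clV_M$ and $\widetilde\clV_M$ and absorbed by a Young inequality with a parameter balancing $M$ and $\lambda$; making this decoupling rigorous is the technical heart of the proof and is specific to the oblique-projection structure of $K_M^\lambda$ in~\eqref{FeedKy2-vort}. Once Theorem~\ref{T:main-vort} is established, Main Result~\ref{MR:stab} follows at once from Lemma~\ref{L:isocurl-bfVH} and Theorem~\ref{T:K=bfKcurl}.
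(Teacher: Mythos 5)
Your proposal follows essentially the same route as the paper: the global character comes from the vanishing of $\langle N(z),z\rangle_{V',V}$ in the $H$-energy identity, and the independence of $M_*$ and $\lambda_*$ comes from splitting $z$ into the oblique components $\varTheta=P_{\clV_M^{\perp H}}^{\widetilde\clV_M}z$ and $\vartheta=P_{\widetilde\clV_M}^{\clV_M^{\perp H}}z$, with the dissipation together with the Poincar\'e-like constant $\xi_{M_+}\to+\infty$ handling $\varTheta$ and the $V$-norm coercivity of the feedback handling $\vartheta$, the cross terms being absorbed by Young — exactly as in Section~\ref{sS:proofT:main-vort}. The only (harmless) deviation is your extra observation that $(\curl^*(A^{-1}\underline w_\ttt)\cdot\nabla z,z)_H=0$, which the paper does not exploit, preferring to bound that term inside $A_{\rm rc}$ via an Agmon estimate.
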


\subsection{Proof of main result in vector field formulation}
We show that 
Main Result~\ref{MR:stab} follows as a corollary of Theorem~\ref{T:main-vort}. Indeed, by~\eqref{goal-exp-vort}, we find
\begin{align}
\norm{y(t)-y_\ttt(t)}{\bfV}^2&=\norm{\curl (y(t)- y_\ttt(t))}{H}^2=\norm{w(t)- w_\ttt(t)}{H}^2\notag\\
&\le C\rme^{-\mu t}\norm{w_0- w_{\ttt0}}{H}^2= C\rme^{-\mu t}\norm{\curl (y_0- y_{\ttt0})}{H}^2=C\rme^{-\mu t}\norm{y_0-y_{\ttt0}}{\bfV}^2,\notag
\end{align}
which gives us~\eqref{goal-exp}. Recal also that, by construction, the total volume covered by the actuators is~$\overline\vol\coloneqq r^2\vol(\clO)$; see~\eqref{total_vol}.\qed

\section{Proof of the main result in vorticity formulation}\label{S:proofmain}
This section is dedicated to the proof of Theorem~\ref{T:main-vort}. 

\subsection{Dynamics of the different to the targeted trajectory}
We want the difference~$w(t)-w_\ttt(t)$ between the controlled solution~$w$ solving system~\eqref{sys-vort-control} and the targeted solution~$w_\ttt$ solving system~\eqref{sys-vort} to satisfy~\eqref{goal-exp-vort}. It will be convenient to rescale time, by taking~$\tau\coloneqq\nu t$ and writing
\begin{equation}\notag
z(\tau)\coloneqq w(\nu^{-1}\tau)-w_\ttt(\nu^{-1}\tau) =w(t)-w_\ttt(t).
\end{equation}
Note that, $z(0)=w(0)-w_\ttt(0)= w_0-w_{\ttt0}\eqqcolon z_0$ and that with~$r=\nu s$,
\begin{align}
&\norm{w(t)-w_\ttt(t)}{H}\le C\rme^{-\mu (t-s)}\norm{w(s)-w_\ttt(s)}{H}\notag\\
\Longleftrightarrow\quad&
\norm{z(\tau)}{H}\le C\rme^{-\mu \nu^{-1} (\tau-r)}\norm{z(r)}{H},\notag
\end{align}
That is, ~\eqref{goal-exp-vort} hold true if, and only if, with~$\underline\mu\coloneqq\mu \nu^{-1}$, it holds that
\begin{equation}\label{goal-diff}
\norm{z(\tau)}{H}\le C\rme^{-\underline\mu (\tau-r)}\norm{z(r)}{H},\quad\mbox{for all }t\ge r\ge 0\mbox{ and all }z_0\in H.
\end{equation}

The difference~$z$, with the feedback control input operator~$K_M^\lambda$, satisfies 
\begin{align}\notag
 &\tfrac{\rmd}{\rmd \tau} z +  A z  +\nu^{-1}\curl^* (A^{-1}\underline w)\cdot\nabla\underline  w -\nu^{-1}\curl^* (A^{-1}\underline w_\ttt )\cdot\nabla\underline w_\ttt =\nu^{-1}V_M^\diamond K_M^\lambda z;\\
&\mbox{where}\quad\underline w(\tau)\coloneqq w(\nu^{-1}\tau)\mbox{ and } \underline w_\ttt \coloneqq w_\ttt(\nu^{-1}\tau).\notag
\end{align} 

For the nonlinear terms we find
\begin{align}\notag
&\curl^* (A^{-1}{\underline w})\cdot\nabla{\underline w} -\curl^* (A^{-1}\underline w_\ttt )\cdot\nabla\underline w_\ttt =
  \curl^* (A^{-1}{\underline w})\cdot\nabla z +\curl^* (A^{-1}z)\cdot\nabla\underline w_\ttt \notag\\
 &\qquad= \curl^* (A^{-1}z)\cdot\nabla z+\curl^* (A^{-1}\underline w_\ttt )\cdot\nabla z +\curl^* (A^{-1}z)\cdot\nabla\underline w_\ttt . \notag
\end{align} 
Therefore, the dynamics of~$z={\underline w}-\underline w_\ttt $ satisfies, with~$\underline\lambda\coloneqq \nu^{-1}\lambda$,
\begin{subequations}\label{sys-diff}
\begin{align}
 &\tfrac{\rmd}{\rmd \tau} z +   A z  +A_{\rm rc}z+N(z)=V_M^\diamond K_M^{\underline \lambda} z,\qquad z(0)= z_0,
\intertext{with a reaction-convection operator~$A_{\rm rc}=A_{\rm rc}(\tau)$ and a nonlinear operator~$N$ as}
&A_{\rm rc}z\coloneqq \nu^{-1}\curl^* (A^{-1}z)\cdot\nabla\underline w_\ttt +\nu^{-1}\curl^* (A^{-1}\underline w_\ttt )\cdot\nabla z,\\
&N(z)\coloneqq \nu^{-1}\curl^* (A^{-1}z)\cdot\nabla z.
\end{align} 
\end{subequations}

\subsection{Continuity of the state operators}
We will show that the state operators~$A$, $A_{\rm rc}$, and~$N$,  satisfy the assumptions
as in~\cite[Assumps.~2.1--2.4]{KunRodWal21}, which we write here as the following Lemmas~\ref{LA:A0sp}--\ref{LA:NN}.

\begin{lemma}\label{LA:A0sp}
 $A\in\clL(V,V')$ is an isomorphism from~$V$ onto~$V'$, $A$ is symmetric, and $(y,z)\mapsto\langle Ay,z\rangle_{V',V}$
 is a complete scalar product on~$V.$
\end{lemma}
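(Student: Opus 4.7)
My plan is to verify all three claims by directly unpacking the definition $\langle Ay,z\rangle_{V',V}\coloneqq(\nabla y,\nabla z)_{L^2(\Omega)^2}$ on $V=W^{1,2}_0(\Omega)$, and invoking the Poincaré inequality together with the completeness of $V$ as a standard Sobolev space.

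First I would check that $A\in\clL(V,V')$. The bilinear form $a(y,z)\coloneqq(\nabla y,\nabla z)_{L^2(\Omega)^2}$ is well defined on $V\times V$ and by Cauchy--Schwarz satisfies $|a(y,z)|\le\dnorm{\nabla y}{L^2(\Omega)^2}\dnorm{\nabla z}{L^2(\Omega)^2}$. Since on $V=W^{1,2}_0(\Omega)$ the Poincaré inequality yields $\dnorm{y}{L^2(\Omega)}\le C_\Omega\dnorm{\nabla y}{L^2(\Omega)^2}$, the full $W^{1,2}$-norm and the seminorm $y\mapsto\dnorm{\nabla y}{L^2(\Omega)^2}$ are equivalent on $V$; hence $a$ is continuous with respect to the $W^{1,2}$-norm and $A\in\clL(V,V')$. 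Symmetry of $A$ is immediate from the symmetry of the $L^2$ inner product in the definition of $a$.

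Next, for the scalar-product and completeness assertions, I would use the same Poincaré inequality to note that $a(y,y)=\dnorm{\nabla y}{L^2(\Omega)^2}^2\ge (1+C_\Omega^2)^{-1}\dnorm{y}{W^{1,2}(\Omega)}^2$, which shows that $a$ is positive definite and coercive on $V$. Therefore $(y,z)\mapsto a(y,z)=\langle Ay,z\rangle_{V',V}$ is a genuine scalar product on $V$, and the induced norm $\dnorm{y}{V}=a(y,y)^{1/2}$ is equivalent to the standard $W^{1,2}$-norm. Since $W^{1,2}_0(\Omega)$ is complete in its standard norm, it is complete in any equivalent norm, giving the Hilbert-space property.

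Finally, I would establish that $A$ is an isomorphism from $V$ onto $V'$. Having shown continuity, symmetry and coercivity of $a$, the Lax--Milgram theorem (or, equivalently, the Riesz representation theorem applied in the Hilbert space $(V,(\cdot,\cdot)_V)$ with $(y,z)_V\coloneqq a(y,z)$) gives for every $\ell\in V'$ a unique $y\in V$ such that $a(y,z)=\langle\ell,z\rangle_{V',V}$ for all $z\in V$, i.e., $Ay=\ell$. Bijectivity together with the continuity estimates above show that $A\colon V\to V'$ is a bounded bijection, so by the open mapping theorem its inverse is bounded and $A$ is an isomorphism. No step here is a real obstacle; the only point to be careful about is to invoke Poincaré (which is available because $\Omega$ is bounded and $V$ consists of zero-trace functions), which is what makes the $H^1$-seminorm equivalent to the full $H^1$-norm on $V$.
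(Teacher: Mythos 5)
Your proposal is correct and follows essentially the same route as the paper: the paper simply cites the isomorphism property as well known, reads off symmetry from the defining formula, and notes positive definiteness via $\nabla y=0\Rightarrow y=0$ on $V$, while you spell out the standard underlying argument (Poincar\'e inequality for norm equivalence and coercivity, Lax--Milgram for bijectivity, completeness transferred through equivalent norms). Nothing is missing or incorrect.
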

\begin{proof}
It is well known that the Dirichlet Laplacian~$A$ maps~$V=W^{1,2}_0(\Omega)$ onto~$V'=W^{-1,2}(\Omega)$. From~\eqref{A} it also follows the symmetry. We have that $\langle Ay,z\rangle_{V',V}$ defines a scalar product because, $\langle Ay,y\rangle_{V',V}=0$ implies~$\nabla y=0$, thus~$y=0$, since~$y\in V$.
\end{proof}

\begin{lemma}\label{LA:A0cdc}
The inclusion $V\subseteq H$ is continuous, dense, and compact. 
\end{lemma}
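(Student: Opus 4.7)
The three properties are classical, so the plan is simply to point to the standard arguments in the order continuity, density, compactness.

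First, I would verify continuity. Recalling that $V = W^{1,2}_0(\Omega)$ is endowed with the scalar product $(w,z)_V = \langle Aw,z\rangle_{V',V} = (\nabla w,\nabla z)_{L^2(\Omega)^2}$, we have $\|w\|_V = \|\nabla w\|_{L^2(\Omega)^2}$. Since $\Omega$ is bounded, the Poincar\'e inequality gives a constant $C_P = C_P(\Omega)$ such that $\|w\|_H = \|w\|_{L^2(\Omega)} \le C_P \|\nabla w\|_{L^2(\Omega)^2} = C_P\|w\|_V$ for every $w \in V$, which is the continuity of the inclusion.

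Next, for density, the standard fact that $C_c^\infty(\Omega)$ is dense in $W^{1,2}_0(\Omega) = V$ (by definition as the closure of $C_c^\infty(\Omega)$ in the $W^{1,2}$-norm) together with the fact that $C_c^\infty(\Omega)$ is dense in $L^2(\Omega) = H$ implies that $V$ itself is dense in $H$.

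Finally, compactness follows from the Rellich--Kondrachov theorem applied to the bounded convex polygonal (hence Lipschitz) domain $\Omega \subset \bbR^2$: the inclusion $W^{1,2}(\Omega) \xhookrightarrow{\rm c} L^2(\Omega)$ is compact, and restricting to the closed subspace $V = W^{1,2}_0(\Omega)$ preserves compactness of the inclusion into $H$. No step is a genuine obstacle here; the only mild point is to recall that Rellich--Kondrachov applies because a convex polygonal domain has Lipschitz boundary, so the extension/embedding hypotheses are met.
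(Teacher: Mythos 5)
Your proposal is correct and follows essentially the same route as the paper: continuity via the Poincar\'e inequality and compactness via the Rellich--Kondrachov embedding (the paper cites \cite[Thm.~4.54]{DemengelDem12} for the latter), with your added remark on density via $C_c^\infty(\Omega)$ filling in a point the paper leaves implicit.
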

\begin{proof}
The continuity is due to the Poincar\'e inequality. For the compactness of the embedding, recall~\cite[Thm.~4.54]{DemengelDem12}
\end{proof}

\begin{lemma}\label{LA:A1}
Let Assumption~\ref{A:bddwtar} hold true. Then, $A_{\rm rc}(\tau)\in\clL(H,V')$ for (almost) all~$\tau>0$, and  there exists a constant~$D_1\ge0$
such that $\norm{A_{\rm rc}}{L^\infty(\bbR_+,\clL(H,V'))}\le \nu^{-1}D_1C_\ttt$, where~$C_\ttt$ is as in Assumption~\ref{A:bddwtar} and~$D_1$ depends on~$\Omega$ only.
\end{lemma}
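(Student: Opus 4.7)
The plan is to estimate $|\langle A_{\rm rc}(\tau)z, v\rangle_{V',V}|$ uniformly in $\tau\ge 0$ for arbitrary $z\in H$ and $v\in V$. Setting $u_z\coloneqq\curl^*(A^{-1}z)$ and $u_\ttt\coloneqq\curl^*(A^{-1}\underline w_\ttt(\tau))$, the two summands of $A_{\rm rc}z$ are $\nu^{-1}\,u_z\cdot\nabla\underline w_\ttt$ and $\nu^{-1}\,u_\ttt\cdot\nabla z$, and each is handled separately.

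The first summand lies in $L^1$ after testing against $v$: by H\"older's inequality with exponents $(4,4,2)$, Assumption~\ref{A:bddwtar} gives $\|\nabla\underline w_\ttt\|_H \le C_\ttt$; the 2D Sobolev embedding $V\hookrightarrow L^4(\Omega)$ controls $\|v\|_{L^4}$ by $C\|v\|_V$; and the isometry $\|u_z\|_\bfV = \|z\|_H$ from Lemma~\ref{L:isocurl-bfVH}, combined with the equivalence of $\|\cdot\|_\bfV$ with the $(W^{1,2}(\Omega))^2$-norm on $\bfV$ (which uses that $\Omega$ is simply connected) and the embedding $W^{1,2}\hookrightarrow L^4$, controls $\|u_z\|_{L^4}$ by $C\|z\|_H$. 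This gives the desired bound on the first summand.

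The second summand is more delicate because $\nabla z$ is not defined for a generic $z\in H$. The key step is the integration by parts
$$\int_\Omega (u_\ttt\cdot\nabla z)\,v\,\rmd x = -\int_\Omega (u_\ttt\cdot\nabla v)\,z\,\rmd x,$$
legitimate on the dense subspace of smooth $z$ and extended by continuity, since $\diver u_\ttt=0$ and $v\rest{\p\Omega}=0$ kill the boundary term. This reduces the regularity requirement on $z$ to merely $L^2$, at the price of needing an $L^\infty$ bound on $u_\ttt$. To secure the latter, I would argue that Assumption~\ref{A:bddwtar} together with the Lions boundary condition (which forces $w_\ttt\rest{\p\Omega}=0$, hence $\underline w_\ttt(\tau)\in V$) allows Lemma~\ref{L:curl-DbfVAbfV} to place $u_\ttt$ in $\rmD(\bfA)$ with $\|\bfA u_\ttt\|_\bfH = \|\curl^*\underline w_\ttt\|_\bfH = \|\nabla\underline w_\ttt\|_H\le C_\ttt$; convexity of the polygonal domain gives the elliptic regularity $\rmD(\bfA)\hookrightarrow (W^{2,2}(\Omega))^2$, and the 2D embedding $W^{2,2}\hookrightarrow L^\infty$ yields $\|u_\ttt\|_{L^\infty}\le CC_\ttt$. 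A H\"older $(\infty,2,2)$ estimate finishes this term.

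The main obstacle is this second summand: one cannot make sense of $\nabla z$ directly, and to render the integration by parts useful one needs the $L^\infty$ bound on $u_\ttt$, which rests on two structural ingredients specific to the present setting, namely the Lions boundary condition (which promotes $w_\ttt$ from $W^{1,2}(\Omega)$ to $W^{1,2}_0(\Omega)=V$) and the convexity of $\Omega$ (which grants full $W^{2,2}$ elliptic regularity up to the boundary of the polygon). Summing both bounds yields $|\langle A_{\rm rc}(\tau)z,v\rangle_{V',V}|\le \nu^{-1}D_1C_\ttt\|z\|_H\|v\|_V$ uniformly in $\tau$, with $D_1$ depending only on $\Omega$, which is the claimed estimate.
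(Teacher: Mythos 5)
Your proof is correct and follows essentially the same route as the paper's: the same splitting of $A_{\rm rc}$ into its two summands, the same use of the isometry $\norm{\curl^*(A^{-1}z)}{\bfV}=\norm{z}{H}$ with the embedding $W^{1,2}\hookrightarrow L^4$ for the first summand, and the same integration by parts combined with the bound $\norm{\curl^*(A^{-1}\underline w_\ttt)}{(L^\infty)^2}\le C\norm{\curl^*(A^{-1}\underline w_\ttt)}{\rmD(\bfA)}=C\norm{\underline w_\ttt}{V}$ for the second. The only cosmetic difference is in the first summand, where the paper also integrates by parts and distributes the H\"older exponents as $(4,4,2)$ over $\underline w_\ttt$, $\curl^*(A^{-1}z)$, and $\nabla v$, whereas you estimate directly with exponents $(4,2,4)$ over $\curl^*(A^{-1}z)$, $\nabla\underline w_\ttt$, and $v$; both are valid.
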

\begin{proof}
Let~$v\in V$ be arbitrary. Recalling~\eqref{sys-diff}, we write
\begin{align}
&A_{\rm rc}=\nu^{-1}A_{\rm rc1}+\nu^{-1}A_{\rm rc2},\quad\mbox{with}\notag\\
&A_{\rm rc1}z\coloneqq\curl^* (A^{-1}z)\cdot\nabla\underline w_\ttt 
\quad\mbox{and}\quad
A_{\rm rc2}z\coloneqq\curl^* (A^{-1}\underline w_\ttt )\cdot\nabla z.\notag
\end{align}

For the operator~$A_{\rm rc1}$, we find
\begin{align}
&\langle A_{\rm rc1}z,v\rangle_{V',V}=(\underline w_\ttt \curl^*(A^{-1}z),\nabla v)_H\le C_1\norm{\underline w_\ttt }{L^4}\norm{\curl^*(A^{-1}z)}{(L^4)^2}\norm{\nabla v}{(L^2)^2}\notag
\end{align}
where we have denoted, for simplicity, the Lebesgue spaces~$L^p\coloneqq L^p(\Omega)$.
Let us denote the Sobolev spaces~$W^{s,p}\coloneqq W^{s,p}(\Omega)$.
From the Sobolev embedding~$W^{1,2}\xhookrightarrow{} L^4 $, we find
\begin{align}\notag
\norm{\curl^*(A^{-1}z)}{(L^4)^2}\le C_2\norm{A^{-1}z}{W^{1,4}}\le C_3\norm{A^{-1}z}{W^{2,2}}
\le C_4\norm{z}{L^2},
\end{align}
which gives us, using also the Poincar\'e inequality,
\begin{align}\label{Arc-est1}
\norm{A_{\rm rc1}z}{V'}\le C_5\norm{\underline w_\ttt }{L^4}\norm{z}{H}\le C_6\norm{\underline w_\ttt }{V}\norm{z}{H}.
\end{align}

Next, for the operator~$A_{\rm rc2}$, we obtain
\begin{align}\notag
&\langle A_{\rm rc2}z,v\rangle_{V',V}=(z\curl^*(A^{-1}\underline w_\ttt ),\nabla v)_H\le C_7\norm{z}{L^2}\norm{\curl^*(A^{-1}\underline w_\ttt )}{(L^\infty)^2}\norm{\nabla v}{(L^2)^2}.
\end{align}
By the Agmon inequality (see~\cite[Lem.~13.2]{Agmon65} \cite[Sect.~1.4]{Temam97}) we find that
\begin{align}
\norm{\curl^*(A^{-1}\underline w_\ttt )}{(L^\infty)^2}&\le C_8\norm{\curl^*(A^{-1}\underline w_\ttt )}{W^{2,2}}\le C_9\norm{\curl^*(A^{-1}\underline w_\ttt )}{\rmD(\bfA)}\notag\\
&= C_9\norm{\bfA\curl^*(A^{-1}\underline w_\ttt )}{\bfH}= C_9\norm{\curl^*\underline w_\ttt }{\bfH}= C_9\norm{\underline w_\ttt }{V}\notag
\end{align}
and,   we arrive at
\begin{align}\notag
\norm{A_{\rm rc2}z}{V'}
\le C_{10}\norm{\underline w_\ttt }{V}\norm{z}{H}.
\end{align}

Therefore, $\norm{A_{\rm rc}z}{V'}
\le \nu^{-1}D_1\norm{\underline w_\ttt }{L^\infty(\bbR_+,V)}\norm{z}{H}$, with~$D_1\coloneqq\max\{C_6,C_{10}\}$ and, by Assumption~\ref{A:bddwtar} it follows~$\norm{A_{\rm rc}z}{V'}
\le\nu^{-1}D_1C_\ttt\norm{z}{H}$.
\end{proof}

\begin{lemma}\label{LA:NN}
 We have~$N\in\clC_{\rm b,i}(V,V')$
and
there exists a constant $C_N\ge 0$ such that
 for  all~$(w_1,w_2)\in V\times V$, we have
\begin{align}
\norm{N(w_2)-N(w_1)}{V'}\le &\quad C_N\norm{w_1}{H}^\frac12\norm{w_1}{V}^\frac12\norm{d}{H}+C_N
\norm{w_2}{H}\norm{d}{H}^\frac12\norm{d}{V}^\frac12.\notag
\end{align}
with~$d\coloneqq w_2-w_1$.
\end{lemma}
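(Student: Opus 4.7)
The plan is to bound the pairing $\langle N(w_2) - N(w_1), v\rangle_{V',V}$ for an arbitrary test function $v \in V$ and then take the supremum over $v$ with $\norm{v}{V}\le 1$. First I would write, with $d = w_2 - w_1$, the telescoping decomposition
\begin{equation}\notag
N(w_2) - N(w_1) = \nu^{-1}\curl^*(A^{-1}w_2)\cdot\nabla d + \nu^{-1}\curl^*(A^{-1}d)\cdot\nabla w_1,
\end{equation}
which is the asymmetric split that matches the two terms appearing on the right-hand side of the desired estimate. The vector fields $\curl^*(A^{-1}w_2)$ and $\curl^*(A^{-1}d)$ are by construction divergence-free in $\bfH$, so that, since $v\in V = W^{1,2}_0(\Omega)$ vanishes on $\p\Omega$, a standard integration by parts allows me to move the gradient off the scalar factors $d$ and $w_1$ onto $v$.

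After integrating by parts I obtain two terms of the form $-\nu^{-1}(g\,\curl^*(A^{-1}h),\nabla v)_H$ with $(g,h)\in\{(d,w_2),(w_1,d)\}$. I would estimate each by H\"older's inequality in the form $L^4\cdot L^4\cdot L^2$, giving
\begin{equation}\notag
|\langle N(w_2)-N(w_1),v\rangle_{V',V}| \le \nu^{-1}\bigl(\norm{d}{L^4}\norm{\curl^*(A^{-1}w_2)}{(L^4)^2} + \norm{w_1}{L^4}\norm{\curl^*(A^{-1}d)}{(L^4)^2}\bigr)\norm{\nabla v}{(L^2)^2}.
\end{equation}
For the $L^4$ norms of the $\curl^*(A^{-1}\cdot)$ factors I would use exactly the same chain as in the proof of Lemma~\ref{LA:A1}, namely the Sobolev embedding $W^{1,2}\xhookrightarrow{}L^4$ in dimension two followed by elliptic regularity $A^{-1}\colon H\to \rmD(A)\subset W^{2,2}$, which yields $\norm{\curl^*(A^{-1}h)}{(L^4)^2}\le C\norm{h}{H}$ for $h\in\{w_2,d\}$. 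For the scalar factors $\norm{d}{L^4}$ and $\norm{w_1}{L^4}$ I would apply the 2D Ladyzhenskaya/Gagliardo--Nirenberg inequality $\norm{f}{L^4}\le C\norm{f}{H}^{1/2}\norm{f}{V}^{1/2}$, valid for $f\in V$. Combining these estimates gives exactly the asserted bound with a constant $C_N$ proportional to $\nu^{-1}$ and depending only on $\Omega$.

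Finally, to verify that $N\in\clC_{\rm b,i}(V,V')$, I would specialize the Lipschitz-type inequality to $w_1=0$, obtaining
\begin{equation}\notag
\norm{N(w_2)}{V'}\le C_N\norm{w_2}{H}\norm{w_2}{H}^\frac12\norm{w_2}{V}^\frac12\le C_N'\norm{w_2}{V}^2,
\end{equation}
where the last inequality uses the Poincar\'e inequality $\norm{w_2}{H}\le C\norm{w_2}{V}$; this supplies an increasing function $\mathfrak n(\varkappa)=C_N'\varkappa^2$ vanishing at the origin. The continuity of $N$ from $V$ to $V'$ then follows from the full Lipschitz estimate, since its right-hand side tends to zero as $d\to 0$ in $V$. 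I do not anticipate any serious obstacle: the only subtle point is choosing the asymmetric splitting at the start so that after the integration by parts the $L^4$--$L^4$ pairing lands on the right pair of factors; once that is arranged, everything else is a routine application of H\"older, elliptic regularity, the 2D Sobolev embedding, and Ladyzhenskaya's inequality.
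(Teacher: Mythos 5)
Your proposal is correct and follows essentially the same route as the paper's proof: the same asymmetric splitting $\nu N(w_2)-\nu N(w_1)=\curl^*(A^{-1}w_2)\cdot\nabla d+\curl^*(A^{-1}d)\cdot\nabla w_1$, the same duality/integration-by-parts step exploiting that $\curl^*(A^{-1}\cdot)$ is divergence-free and $v\in W^{1,2}_0(\Omega)$, the same $L^4$--$L^4$--$L^2$ H\"older estimate with $\norm{\curl^*(A^{-1}h)}{(L^4)^2}\le C\norm{h}{H}$ via $W^{1,2}\xhookrightarrow{}L^4$ and elliptic regularity, and the same Ladyzhenskaya interpolation on the remaining $L^4$ factor. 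The paper obtains the $\clC_{\rm b,i}$ bound directly rather than by specializing $w_1=0$, but this is an immaterial difference.
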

\begin{proof}
For any~$w\in V$ we find, following the proof of Lemma~\ref{LA:A1}, we arrive at the analogue of~\eqref{Arc-est1},
\begin{align}
&\nu\norm{N(w)}{V'}=\norm{\curl^* (A^{-1}w)\cdot\nabla w}{V'}\le C_5\norm{w}{L^4}\norm{w}{H}\le C_{11}\norm{w}{V}^2,\notag
\end{align} 
which gives us~$N\in\clC_{\rm b,i}(V,V')$.
Next, recalling~\eqref{sys-diff}, we find that, with~$d\coloneqq w_2-w_1$,
\begin{align}
\nu N(w_2)-\nu N(w_1)&=\curl^* (A^{-1}w_2)\cdot\nabla w_2-\curl^* (A^{-1}w_1)\cdot\nabla w_1\notag\\
&=\curl^* (A^{-1}w_2)\cdot\nabla d+\curl^* (A^{-1}d)\cdot\nabla w_1.\notag
\end{align} 
Using again the analogues of~\eqref{Arc-est1} as
\begin{align}
&\norm{\curl^* (A^{-1}d)\cdot\nabla w_1}{V'}\le C_5\norm{w_1}{L^4}\norm{d}{H}\le  C_{12}\norm{w_1}{H}^{\frac12}\norm{w_1}{V}^{\frac12}\norm{d}{H},\notag\\
&\norm{\curl^* (A^{-1}w_2)\cdot\nabla d}{V'}\le C_5\norm{d}{L^4}\norm{w_2}{H}\le  C_{13}\norm{w_2}{H}\norm{d}{H}^{\frac12}\norm{d}{V}^{\frac12},\notag
\end{align}
we conclude that the result follows by taking~$C_N=\nu^{-1}\max\{C_{12},C_{13}\}$. 
\end{proof}

\begin{remark}\label{R:NN}
We observe that Lemma~\ref{LA:NN} implies
\begin{align}\notag
\norm{N(w_2)-N(w_1)}{V'}\le &\quad C_N\sum_{j=1}^2\left(\norm{w_1}{H}^{\zeta_{1j}}\norm{w_1}{V}^{\zeta_{2j}}+\norm{w_2}{H}^{\zeta_{1j}}\norm{w_2}{V}^{\zeta_{2j}}\right)\norm{d}{H}^{\delta_{1j}}\norm{d}{V}^{\delta_{2j}}
\end{align}
with~$(\zeta_{1j},\zeta_{2j},\delta_{1j},\delta_{2j})\in\left\{(\frac12,\frac12,1,0),(1,0,\frac12,\frac12)\right\}$; in either case the relations~$\zeta_{2j}+\delta_{2j}<1$ and~$\delta_{1j}+\delta_{2j}=1$ hold true, as required in~\cite[Assum.~2.4]{KunRodWal21}.
\end{remark}

\subsection{Monotonicity and continuity of the feedback control operator}

\begin{lemma}\label{LA:poincare}
The linear spans~$\clV_{M}\subset H$ of sets of vorticity actuators and~$\widetilde\clV_{M}\subset V$ of auxiliary functions in~\eqref{Act-Aux-vort} satisfy the following.
\begin{enumerate}
\item $M\mapsto M_\sigma$ is a strictly increasing function and~$H= \clV_{M}\oplus  \widetilde\clV_{M}^{\perp H}$;
\item  the  Poincar\'e-like constant
\begin{align}\label{Poinc_const}
\xi_{M_+}\coloneqq\inf_{\varTheta\in (V\bigcap\clV_{M}^{\perp H})\setminus\{0\}}
\tfrac{\norm{\varTheta}{V}^2}{\norm{\varTheta}{H}^2},
\end{align}
satisfies $\lim\limits_{M\to+\infty}\xi_{M_+}=+\infty$.
\end{enumerate}
\end{lemma}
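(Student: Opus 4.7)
The plan is to split the lemma into its two claims and address them with independent arguments. For item (1), the monotonicity of $M\mapsto M_\sigma$ is immediate: $M_\sigma=M^2$ in the rectangular case by~\eqref{UM} and $M_\sigma=4^{M-1}$ in the triangulated case, cf.~Fig.~\ref{fig.suppActTri}. For the direct-sum decomposition I would argue via the $M_\sigma\times M_\sigma$ Gram-type matrix $G^M$ with entries $G^M_{ij}\coloneqq(\varphi_i^M,\widetilde\varphi_j^M)_H$. Because the supports $\overline\omega_j^M$ are pairwise disjoint---as already established in the discussion following~\eqref{suppAct}---and because $\varphi_i^M$ and $\widetilde\varphi_j^M$ are supported in $\overline\omega_i^M$ and $\overline\omega_j^M$ respectively, the matrix $G^M$ is diagonal. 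Its diagonal entries equal $\int_{\omega_j^M}\overline\phi_{r/M,c_j^M}\,\overline{\widetilde\phi}_{r/M,c_j^M}\,\rmd x>0$, since $\phi$ and $\widetilde\phi$ are strictly positive on $\clO$ by~\eqref{ref-w} and~\eqref{ref-tildew}. Invertibility of $G^M$ then yields, by a routine linear-algebraic argument (solving $G^Ma=b$ with $b_j=(h,\widetilde\varphi_j^M)_H$ to extract the $\clV_M$-component of any $h\in H$), the decomposition $H=\clV_M\oplus\widetilde\clV_M^{\perp H}$.

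For item (2) I would use a covering/scaling argument combined with a weighted Poincar\'e inequality. First, I would partition $\Omega$ (up to a null set) into $M_\sigma$ rescaled copies $R_j^M$ of a fixed reference cell $R$, with $\omega_j^M\subset R_j^M$ for every $j$, following the self-similar subdivision already visible in Figs.~\ref{fig.suppActRect}--\ref{fig.suppActTri}; each $R_j^M$ is obtained from $R$ by an isometry composed with a rescaling of factor $\rho_M\to 0$ ($\rho_M=1/M$ for rectangles, $\rho_M=2^{-(M-1)}$ for triangles). A general convex polygonal $\Omega$ is first triangulated into finitely many similar triangles and the triangular subdivision is then applied inside each piece.

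Next, on the reference cell $R$, with its distinguished subdomain $\omega\subset R$ and weight $\phi$ satisfying~\eqref{ref-w}, I would establish the weighted Poincar\'e inequality
\begin{equation}\notag
\norm{u}{L^2(R)}\le C_R\norm{\nabla u}{L^2(R)}\quad\text{for every }u\in W^{1,2}(R)\text{ with }(u,\phi)_{L^2(\omega)}=0
\end{equation}
by a Bramble--Hilbert-type contradiction argument: a normalized sequence with vanishing gradient would, via the compact embedding $W^{1,2}(R)\xhookrightarrow{\rm c}L^2(R)$, cluster at a constant $c\ne 0$, yet $(c,\phi)_{L^2(\omega)}=c\int_\omega\phi>0$ would contradict the orthogonality. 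Transferring this estimate to $R_j^M$ through the affine change of variables introduces a factor $\rho_M^2$, and the constraint $(\varTheta,\varphi_j^M)_H=0$ translates exactly into the vanishing of the weighted integral on $\omega_j^M\subset R_j^M$. Summing over $j$,
\begin{equation}\notag
\norm{\varTheta}{H}^2=\sum_{j=1}^{M_\sigma}\norm{\varTheta}{L^2(R_j^M)}^2\le C\rho_M^2\sum_{j=1}^{M_\sigma}\norm{\nabla\varTheta}{L^2(R_j^M)}^2=C\rho_M^2\norm{\varTheta}{V}^2,
\end{equation}
which yields $\xi_{M_+}\ge C^{-1}\rho_M^{-2}\to+\infty$.

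The main subtle point is the reference-cell weighted Poincar\'e inequality: since $\phi$ is merely in $L^2(\clO)$ and the constraint $(u,\phi)_{L^2(\omega)}=0$ is strictly weaker than a mean-zero condition, the classical Poincar\'e--Wirtinger proof does not transfer verbatim, and strict positivity of $\phi$ on $\clO$ is essential to annihilate the constant mode in the compactness step. A secondary issue is engineering the covering $\{R_j^M\}$ so that it genuinely tiles $\Omega$ while respecting the self-similar actuator placement; once the two model geometries of Figs.~\ref{fig.suppActRect}--\ref{fig.suppActTri} are treated, the general polygonal case follows by gluing cell-wise estimates along a fixed triangulation, with all constants depending only on $\Omega$, $r$, and the shape of $\clO$.
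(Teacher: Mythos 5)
Your proposal is correct and follows essentially the same route as the paper: item (1) via the diagonal Gram matrix $[(\varphi_i^M,\widetilde\varphi_j^M)_H]$ with positive diagonal entries from the disjoint supports and positivity of $\phi,\widetilde\phi$, and item (2) via a single-cell weighted Poincar\'e inequality (which the paper obtains from the generalized Poincar\'e inequality by checking that $c\indf_\Omega\mapsto\norm{(c\indf_\Omega,\varphi_1^1)_{L^2}}{\bbR}$ is a norm on the constants, i.e.\ the same compactness mechanism you spell out) combined with the self-similar rescaling over the $M_\sigma$ cells, which the paper delegates to the cited reference. You merely fill in details the paper leaves to citations.
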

  \begin{proof}
 Clearly~$M\mapsto M_\sigma=M^2$ is strictly increasing, for~$M\in\bbN_+$.  To show that we have the direct sum~$H= \clV_{M}\oplus  \widetilde\clV_{M}^{\perp H}$ we can use~\cite[Lem.~1.7]{KunRod19-cocv} together with the fact that the matrix~$[(\clV_{M},\widetilde\clV_{M})_H]$ with entry~$(\varphi_i^M,\widetilde\varphi_j^M)_H$ in the $i$-th row and $j$-th column is diagonal, with nonzero diagonal entries~$(\varphi_i^M,\widetilde\varphi_j^M)_H>0$; recall that the product~$\overline\varphi_i\overline{\widetilde\varphi}_j\ge0$ has nonempty support~$\overline \omega_j^M$, see~\eqref{suppAct}. To show that~$\lim\limits_{M\to+\infty}\xi_{M_+}=+\infty$ we can follow the arguments in~\cite[Sect.~5]{Rod21-sicon}, using the fact that in the case of a single actuator we have the Poincar\'e-like inequality~\eqref{suppAct},
 \begin{equation}\notag
 \tfrac{ \norm{\nabla f}{W^{1,2}}^2 }{\norm{f}{L^2}^2}\ge \xi_1, \quad\mbox{for all}\quad f\in W^{1,2}{\textstyle\bigcap}\clV_1^{\perp H},
 \end{equation}
 for a suitable constant~$\xi_1>0$. The existence of such a~$\xi_1$ and from the fact that~$c\indf_\Omega\mapsto\fkN(c\indf_\Omega)\coloneqq \norm{(c\indf_\Omega,\varphi_1^1)_{L^2}}{\bbR}$ is a norm in the subspace of constant functions~$\bbR\indf_\Omega\subset W^{1,2}$ (cf.~\cite[Ch. II, Sect. 1.4]{Temam97}). To see that~$\fkN$ is a norm in~$\bbR\indf_\Omega$, from
 \begin{align}
 \fkN(c\indf_\Omega)&=\norm{c}{\bbR}\norm{(\indf_\Omega,\varphi_1^1)_{L^2}}{\bbR}=
\norm{c}{\bbR} \fkN(\indf_\Omega),\notag
\end{align}
it is sufficient to show that~$\fkN(\indf_\Omega)\ne0$, which follows from
$\fkN(\indf_\Omega)
 =\norm{\int_\Omega\varphi_1^1\,\rmd\Omega}{\bbR}
  =\norm{\int_{\omega_j^M}\varphi_1^1\,\rmd\Omega}{\bbR}>0,
$ because~$\varphi_1^1(x)>0$ for~$x\in\omega_j^M$.
  \end{proof} 

\begin{lemma}\label{LA:Kmonot}
The feedback input operator~$\clK_M^1\coloneqq V_M^\diamond K_M^1$ satisfies
  \begin{align}\label{monotK}
  &\clK_M^1z=\clK_M^1(P_{\clV_{M}}^{\clV_{M}^{\perp H}} z)\in\clV_{M}\quad\mbox{for all}\quad z\in H\notag
  \intertext{and, there exists a constant~$\kappa_M>0$ such that}
  &(\clK_M^1p,p)_H\le-\norm{P_{\widetilde\clV_{M}}^{\clV_{M}^{\perp H}} p}{V}^2\le -\kappa_M\norm{p}{H}^2\quad\mbox{for all}\quad p\in \clV_{M}.\notag
\end{align} 
\end{lemma}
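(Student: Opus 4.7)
The plan is to first substitute $K_M^1$ from~\eqref{FeedKy2-vort} into $\clK_M^1 = V_M^\diamond K_M^1$ so as to cancel $V_M^\diamond(V_M^\diamond)^{-1}$ and obtain the convenient form
\[
\clK_M^1 = -P_{\clV_M}^{\widetilde\clV_M^{\perp H}}\, A\, P_{\widetilde\clV_M}^{\clV_M^{\perp H}},
\]
where the outer projection $P_{\clV_M}^{\widetilde\clV_M^{\perp H}}$ is interpreted as the natural extension to $V'$ defined through the duality pairing, which is licit because $\widetilde\clV_M \subset V$ and $\langle h, v\rangle_{V',V} = (h,v)_H$ whenever $h\in H$. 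Inclusion $\clK_M^1 z \in \clV_M$ is then immediate from the form of the outer projection. For the factorization identity, I would decompose $z \in H$ by Lemma~\ref{LA:poincare}(i) as $z = P_{\clV_M}^{\clV_M^{\perp H}}z + (z - P_{\clV_M}^{\clV_M^{\perp H}}z)$, with second summand in $\clV_M^{\perp H}$; since $\clV_M^{\perp H}$ is precisely the kernel of $P_{\widetilde\clV_M}^{\clV_M^{\perp H}}$, that summand is annihilated, giving $\clK_M^1 z = \clK_M^1(P_{\clV_M}^{\clV_M^{\perp H}} z)$.

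For the first inequality in the second assertion, I would fix $p \in \clV_M$ and set $q \coloneqq P_{\widetilde\clV_M}^{\clV_M^{\perp H}} p \in \widetilde\clV_M \subset V$, so that $p - q \in \clV_M^{\perp H}$. Since $P_{\clV_M}^{\widetilde\clV_M^{\perp H}} A q$ lies in $\clV_M$ and pairs to zero with elements of $\clV_M^{\perp H}$, the second factor $p$ may be replaced by $q$ in the $H$-inner product:
\[
(\clK_M^1 p, p)_H = -\bigl(P_{\clV_M}^{\widetilde\clV_M^{\perp H}} A q,\; q\bigr)_H.
\]
The defining property of the oblique projection, namely $\langle A q - P_{\clV_M}^{\widetilde\clV_M^{\perp H}} A q,\, v\rangle_{V',V} = 0$ for all $v \in \widetilde\clV_M$, applied to $v = q$, then gives $(P_{\clV_M}^{\widetilde\clV_M^{\perp H}} A q, q)_H = \langle A q, q\rangle_{V',V} = \norm{q}{V}^2$ by Lemma~\ref{LA:A0sp}. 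This produces the desired equality $(\clK_M^1 p, p)_H = -\norm{P_{\widetilde\clV_M}^{\clV_M^{\perp H}} p}{V}^2$.

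For the lower bound, I would observe that $P_{\widetilde\clV_M}^{\clV_M^{\perp H}}|_{\clV_M}\colon \clV_M \to \widetilde\clV_M$ is a linear map between finite-dimensional spaces of equal dimension $M_\sigma$, and that it is injective: if $P_{\widetilde\clV_M}^{\clV_M^{\perp H}} p = 0$ for $p \in \clV_M$, then $p \in \clV_M \cap \clV_M^{\perp H} = \{0\}$. Hence it is an isomorphism, and by equivalence of norms on finite-dimensional spaces there exists $\kappa_M > 0$ with $\norm{P_{\widetilde\clV_M}^{\clV_M^{\perp H}} p}{V}^2 \ge \kappa_M \norm{p}{H}^2$ for every $p \in \clV_M$. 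The main obstacle (and essentially the only subtle point) is the consistent interpretation of $P_{\clV_M}^{\widetilde\clV_M^{\perp H}} A P_{\widetilde\clV_M}^{\clV_M^{\perp H}}$ across the scale $H$, $V$, $V'$: once the outer oblique projection is extended to $V'$ via the duality pairing, the computation above, which silently converts an $H$-product into a $\langle V', V\rangle$-pairing, is rigorous.
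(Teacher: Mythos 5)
Your proof is correct and follows essentially the same route as the paper's: the factorization through $P_{\clV_{M}}^{\clV_{M}^{\perp H}}$, the identity $(\clK_M^1 p,p)_H=-\norm{P_{\widetilde\clV_{M}}^{\clV_{M}^{\perp H}}p}{V}^2$, and the finite-dimensional norm-equivalence argument yielding $\kappa_M$ all match. The only difference is that where the paper invokes the adjoint relation $(P_{\clV_{M}}^{\widetilde\clV_{M}^{\perp H}})^*=P_{\widetilde\clV_{M}}^{\clV_{M}^{\perp H}}$ from cited references, you re-derive that instance directly from the defining properties of the oblique projections (including the needed extension of the outer projection to $V'$), which is an equivalent, self-contained presentation of the same step.
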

  \begin{proof}
Since~$P_{\widetilde\clV_{M}}^{\clV_{M}^{\perp H}}=P_{\widetilde\clV_{M}}^{\clV_{M}^{\perp H}}P_{\clV_{M}}^{\clV_{M}^{\perp H}}$, we can write
\begin{equation}\notag
\clK_M^1z= V_M^\diamond K_M^1z=-P_{\clV_{M}}^{\widetilde\clV_{M}^{\perp H}}A P_{\widetilde\clV_{M}}^{\clV_{M}^{\perp H}}z=-P_{\clV_{M}}^{\widetilde\clV_{M}^{\perp H}}A P_{\widetilde\clV_{M}}^{\clV_{M}^{\perp H}}P_{\clV_{M}}^{\clV_{M}^{\perp H}} z\in\clV_M.
 \end{equation}
The monotonicity follows by the adjoint relation~$(P_{\clV_{M}}^{\widetilde\clV_{M}^{\perp H}})^*=P_{\widetilde\clV_{M}}^{\clV_{M}^{\perp H}}$, see~\cite[Lem.~3.8]{RodSturm20} \cite[Lem.~3.4]{KunRodWal21}, which allows us to obtain
  \begin{align}\label{KmonotV}
    &(\clK_M^1p,p)_H=- (AP_{\widetilde\clV_{M}}^{\clV_{M}^{\perp H}} p,P_{\widetilde\clV_{M}}^{\clV_{M}^{\perp H}} p)_H=-\norm{P_{\widetilde\clV_{M}}^{\clV_{M}^{\perp H}} p}{V}^2.
\end{align} 
Finally, note that since~$p\in\clV_M$ and~$H=\widetilde\clV_M\oplus\clV_M^{\perp H}$, it follows that $P_{\widetilde\clV_{M}}^{\clV_{M}^{\perp H}}p=0$ if, and only if,~$p=0$. Hence,  ~$p\mapsto\norm{P_{\widetilde\clV_{M}}^{\clV_{M}^{\perp H}} p}{V}$ defines a norm in the finite-dimensional space~$\clV_M$ and, consequently, $\norm{P_{\widetilde\clV_{M}}^{\clV_{M}^{\perp H}} p}{V}\ge\kappa_M\norm{p}{H}$, for some constant~$\kappa_M>0$.
  \end{proof}

\begin{remark}\label{R:Kmonot}
Assumption~2.6 in~\cite{KunRodWal21} follows from Lemma~\ref{LA:Kmonot}, with~$\overline\clK_M \coloneqq  \kappa_M^{-1}\clK_M^1$ satisfying~$(\overline\clK_M p,p)_H\le-\norm{p}{H}$.
\end{remark}

From Lemmas~\ref{LA:A0sp}--\ref{LA:NN} and Lemmas~\ref{LA:poincare}--\ref{LA:Kmonot} (see also Rems.~\ref{R:NN} and~\ref{R:Kmonot}) it follows that we can apply~\cite[Thm.~3.1]{KunRodWal21} to obtain the following semiglobal stabilizability result.
 \begin{theorem}\label{T:main-vort-semi}
 Given~$\mu>0$ and~$R>0$, there exist~$M_*=M_*(\mu,R)\in\bbN_+$ and~$\lambda_*=\lambda_*(\mu,R,M)>0$ such  that the solutions of~\eqref{sys-vort} and~\eqref{sys-vort-K} satisfy~\eqref{goal-exp-vort}, for every~$M\ge M_*$ and~$\lambda\ge\lambda_*$, provided
 $\norm{w_0-w_{\ttt0}}{H}\le R$.  
 \end{theorem}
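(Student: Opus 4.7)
\subsection*{Proof plan for Theorem~\ref{T:main-vort-semi}}

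The plan is to apply the abstract semiglobal stabilizability result \cite[Thm.~3.1]{KunRodWal21} to the rescaled difference system~\eqref{sys-diff}, after verifying that its hypotheses are met by~$A$, $A_{\rm rc}$, $N$, and the feedback operator $\clK_M^\lambda = V_M^\diamond K_M^\lambda$ built from the families~$(V_M,\widetilde V_M)$. Since~\eqref{goal-exp-vort} is equivalent to~\eqref{goal-diff} under the time rescaling $\tau=\nu t$, it suffices to prove the decay of $\norm{z(\tau)}{H}$ at rate $\underline\mu = \mu\nu^{-1}$ for the solution~$z$ of~\eqref{sys-diff} whenever $\norm{z_0}{H}\le R$.

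First I would collect the structural hypotheses already established in this section. Lemma~\ref{LA:A0sp} together with Lemma~\ref{LA:A0cdc} provides the pivot/base Gelfand triple $V\xhookrightarrow{\rm d,c} H = H' \xhookrightarrow{\rm d,c} V'$ with $A$ a symmetric isomorphism from $V$ onto $V'$, which is exactly \cite[Assump.~2.1--2.2]{KunRodWal21}. Lemma~\ref{LA:A1} gives the required $L^\infty_\tau$-bound on the time-dependent reaction-convection operator $A_{\rm rc}\in L^\infty(\bbR_+,\clL(H,V'))$; the bound is finite thanks to Assumption~\ref{A:bddwtar} through the constant $C_{\tt t}$. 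Lemma~\ref{LA:NN} (cf. Remark~\ref{R:NN}) verifies the locally Lipschitz-type estimate for~$N$ in the scale required by \cite[Assump.~2.4]{KunRodWal21}, with interpolation exponents $(\zeta_{1j},\zeta_{2j},\delta_{1j},\delta_{2j})$ satisfying $\zeta_{2j}+\delta_{2j}<1$ and $\delta_{1j}+\delta_{2j}=1$. Finally, Lemma~\ref{LA:poincare} supplies the direct sum decomposition $H=\clV_M\oplus\widetilde\clV_M^{\perp H}$ and the divergence $\xi_{M_+}\to+\infty$ of the Poincar\'e-like constant, while Lemma~\ref{LA:Kmonot} (with Remark~\ref{R:Kmonot}) provides the monotonicity $(\overline\clK_M p,p)_H\le -\norm{p}{H}^2$ for the rescaled feedback on $\clV_M$; these are the hypotheses \cite[Assump.~2.5--2.6]{KunRodWal21}.

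Having verified the abstract assumptions, I would then invoke \cite[Thm.~3.1]{KunRodWal21} directly with the rescaled gain $\underline\lambda=\nu^{-1}\lambda$ and rescaled rate $\underline\mu=\mu\nu^{-1}$. The abstract theorem yields $M_*(\underline\mu, R)$ and $\lambda_*(\underline\mu, R, M)$ such that for all $M\ge M_*$, $\lambda\ge \lambda_*$, and all initial data with $\norm{z_0}{H}\le R$, the difference satisfies the exponential bound~\eqref{goal-diff}. Unrolling the time rescaling, this translates back to~\eqref{goal-exp-vort} with $M_*(\mu,R)$ and $\lambda_*(\mu,R,M)$ depending on the original $\mu$ and~$R$ through $\underline\mu$ and the universal constants appearing in Lemmas~\ref{LA:A1}--\ref{LA:Kmonot} (in particular the constants $D_1$, $C_N$, $\kappa_M$ and the Poincar\'e-like $\xi_{M_+}$).

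The main obstacle, which is precisely what the abstract framework in \cite{KunRodWal21} is designed to handle, is the nonlinear term~$N$: because the bound in Lemma~\ref{LA:NN} is quadratic in $\norm{w}{V}$, a direct energy estimate on $\tfrac{\rmd}{\rmd\tau}\norm{z}{H}^2$ produces a term whose sign cannot be controlled by the dissipation $\norm{z}{V}^2$ alone unless either $\norm{z}{H}$ remains small (the semiglobal radius~$R$) or an extra cancellation is exploited (this is where, in Main Result~\ref{MR:stab}, one uses the specific orthogonality property of the vorticity convection term to go global). In the semiglobal setting, the $R$-dependence of $\lambda_*$ enters exactly through the need to absorb the nonlinear contribution on the ball $\{\norm{z_0}{H}\le R\}$ by choosing the gain large enough so that the feedback dissipation, controlled from below via $\kappa_M$ and $\xi_{M_+}$, dominates the worst-case quadratic growth along trajectories; this part is packaged into the abstract proof of \cite[Thm.~3.1]{KunRodWal21} and does not need to be redone here. \qed
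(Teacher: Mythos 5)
Your proposal matches the paper's own proof essentially verbatim: the paper also establishes Theorem~\ref{T:main-vort-semi} by verifying the hypotheses of \cite[Thm.~3.1]{KunRodWal21} through Lemmas~\ref{LA:A0sp}--\ref{LA:NN} and Lemmas~\ref{LA:poincare}--\ref{LA:Kmonot} (with Remarks~\ref{R:NN} and~\ref{R:Kmonot}) and then invoking that abstract result on the rescaled difference system~\eqref{sys-diff}. Your closing remarks on the source of the $R$-dependence and on the cancellation used later for the global result are consistent with the paper's own discussion following the theorem.
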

 However, recall that we are looking for a global result where we want to find~$M_*$ and~$\lambda_*$ independent of~$R$, and also independently of each other. We shall prove such result in Section~\ref{sS:proofT:main-vort}.

\subsection{Proof of Theorem~\ref{T:main-vort}}\label{sS:proofT:main-vort}
We follow a variation of the arguments in~\cite[Sect.~3.1]{KunRodWal21}.
We recall that~$z\coloneqq \underline  w-\underline w_\ttt $ satisfies~\eqref{sys-diff},
 where
 \begin{equation}\notag
 \underline w(\tau)\coloneqq w(\nu^{-1}\tau)\mbox{ and } \underline w_\ttt (\tau)\coloneqq w_\ttt(\nu^{-1}\tau),\quad\mbox{and}\quad \tau=\nu t.
 \end{equation}
 
 Let us be given~$\mu>0$ and, recalling~\eqref{goal-diff}, we define~$\underline\mu=\nu^{-1}\mu$.

 Multiplying the dynamics   in~\eqref{sys-diff} by~$2z$, where~$\overline\lambda=\nu^{-1}\lambda$,
 we find
 \begin{align}
\tfrac{\rmd}{\rmd \tau}\norm{z}{H}^2 &=-2\langle Az+A_{\rm rc}z+N(z),z\rangle_{V',V}
+2\bigl(V_M^\diamond K_M^{\underline \lambda} z,z\bigr)_H.\notag
\end{align}
 Now, the nonlinearity has the particular property that
  \begin{equation}\notag
  \langle N(z),z\rangle_{V',V}=\nu^{-1}(\curl^* (A^{-1}z),\nabla(z^2))_{(L^2)^2}=0,
   \end{equation}
leading us to
 \begin{align}
\tfrac{\rmd}{\rmd \tau}\norm{z}{H}^2 &=-2\norm{z}{V}^2-2\langle A_{\rm rc}z,z\rangle_{V',V}
+2\bigl(V_M^\diamond K_M^{\underline \lambda} z,z\bigr)_H,\notag
\end{align}
where the contribution of the nonlinear term is omitted. This possibility of such omission is the reason why we  are going to be able to choose~$M$ and~$\lambda$ independently of the norm of the initial difference~$z_0\coloneqq w_0-\widehat  w_0$. 

From~$\bigl(V_M^\diamond K_M^{\underline \lambda} z,z\bigr)_H=\bigl(V_M^\diamond K_M^{\underline \lambda}(P_{\clV_M}^{\clV_M^{\perp H}}z),P_{\clV_M}^{\clV_M^{\perp H}}z\bigr)_H$, recalling Lemma~\ref{LA:Kmonot}, we obtain
\begin{align}
\bigl(V_M^\diamond K_M^{\underline \lambda} z,z\bigr)_H\le-\underline\lambda\norm{P_{\widetilde\clV_{M}}^{\clV_{M}^{\perp H}} z}{V}^2.\notag
\end{align}
In this estimate, we have the (strong) $V$-norm  in the right hand side, this is the reason why we are going to be able to choose~$M$ and~$\lambda$ independently of each other. Note that, recalling~\eqref{KmonotV}, the appearance of the~$V$-norm is due to the use of the diffusion operator~$A$ in the construction of~$K_M^{\underline \lambda}$ as in~\eqref{FeedKy2-vort}.

Next, we use Lemma~\ref{LA:A1} and find
\begin{align}
\tfrac{\rmd}{\rmd \tau}\norm{z}{H}^2 &\le-2\norm{z}{V}^2+2 C_{\rm rc}\norm{z}{H}\norm{z}{V}
-2\underline\lambda\norm{P_{\widetilde\clV_{M}}^{\clV_{M}^\perp} z}{V}^2,\quad\mbox{with}\quad C_{\rm rc}\coloneqq\nu^{-1}D_1C_\ttt.\notag
\end{align}
Thus, by the Young inequality we  obtain
\begin{align}
 2\langle A_{\rm rc}z,z\rangle_{V',V}&\le2\gamma_1\norm{z}{V}^2 +2\gamma_1^{-1}C_{\rm rc}^2\norm{z}{H}^2,\quad\mbox{for all}\quad \gamma_1>0,\notag
\end{align}
which leads us to
\begin{align}
\tfrac{\ed}{\ed \tau}\norm{z}{H}^2
&\le-(2-2\gamma_1)\norm{z}{V}^2
+\left(2\gamma_1^{-1}C_{\rm rc}^2\right)\norm{z}{H}^2
-2\underline\lambda \norm{P_{\widetilde \clV_M}^{\clV_M^\perp}z}{V}^2.\notag
\end{align}
Now, we set~$\gamma_1=\frac12$, and obtain
\begin{equation}
\tfrac{\ed}{\ed \tau}\norm{z}{H}^2
\le-\norm{z}{V}^2 -2\underline\lambda \norm{P_{\widetilde \clV_M}^{\clV_M^\perp}z}{V}^2
+4C_{\rm rc}^2\norm{z}{H}^2.\notag
\end{equation}

Next, we write~$z=\varTheta+\vartheta$ with~$\varTheta\coloneqq P_{\clV_M^\perp}^{\widetilde \clV_M}z$ and~$\vartheta= P_{\widetilde \clV_M}^{\clV_M^\perp}z$ and observe that
\begin{equation}
-\norm{z}{V}^2=-\norm{\varTheta}{V}^2-2(\varTheta,\vartheta)_V-\norm{\vartheta}{V}^2\le
-\tfrac12\norm{\varTheta}{V}^2+\norm{\vartheta}{V}^2.\notag
\end{equation}
Recalling the Poincar\'e-like constant in~\eqref{Poinc_const}, we arrive at
\begin{equation}
\tfrac{\ed}{\ed \tau}\norm{z}{H}^2
\le-\tfrac12\xi_{M_+}\norm{\varTheta}{H}^2 -(2\underline\lambda-1) \norm{\vartheta}{V}^2
+4C_{\rm rc}^2\norm{z}{H}^2.\notag
\end{equation}

Due to Lemma~\ref{LA:poincare}, we can choose~$M_*\in\bbN_+$ such that
\begin{align}
\xi_{M_+}&\ge 8\underline\mu+16C_{\rm rc}^2,\quad\mbox{for all}\quad M\ge M_*;\notag
\intertext{and (independently) we can choose~$\underline\lambda_*>0$ such that}
\underline\lambda_*&\ge 2\underline\mu+8C_{\rm rc}^2+\tfrac12.\notag
\end{align}

For these choices we obtain that for all~$M\ge M_*$ and all~$\underline\lambda\ge\lambda_*$, it holds
\begin{equation}
\tfrac{\ed}{\ed \tau}\norm{z}{H}^2
\le-(4\underline\mu+8C_{\rm rc}^2)(\norm{\varTheta}{H}^2+\norm{\vartheta}{H}^2)
+4C_{\rm rc}^2\norm{z}{H}^2\le-2\underline\mu\norm{z}{H}^2.\notag
\end{equation}
From~$\tau=\nu t$, $\underline\mu=\nu^{-1}\mu$, and~$z(\tau)=z(\nu t)=w(t)-w_\ttt(t)$,
we obtain, for all~$t\ge s\ge0$,
\begin{align}
\norm{w(t)-w_\ttt(t)}{H}^2&=\norm{z(\tau)}{H}^2\le\ex^{-2\underline\mu(\tau-\nu s)}\norm{z(\nu s)}{H}^2
=\ex^{-2\mu(t- s)}\norm{w(s)-w_\ttt(s)}{H}^2.\notag
\end{align}
Therefore, the theorem follows with~$M_*$ as above and with~$\lambda_*=\nu\underline\lambda_*$. Note that, since~$\underline\lambda=\nu^{-1}\lambda$, we have that~$\lambda\ge\lambda_*$ if, and only if, $\underline\lambda\ge\underline\lambda_*$. 
\qed

\section{Numerical results}\label{S:numerics}
We validate the theoretical results by presenting the results of simulations showing the stabilizing performance of the feedback control. The results are presented for the scalar vorticity equation. The simulations were run with Matlab.

To simplify the exposition, throughout the previous sections we have considered homogeneous boundary conditions~$w\rest{\p\Omega}=0$ for the vorticity. The case of nonhomogeneous boundary conditions~$w\rest{\p\Omega}=g$, with~$g\ne0$, can be reduced to the case of homogeneous boundary conditions  by a lifting argument, assuming that the boundary data~$g$ is regular enough (cf. Rem.~\ref{R:nonhom-bcs}).
Considering more general~$g\ne0$ makes it easier to construct exact analytical solutions for the sake of numerical comparison tests. We shall include an example with such a comparison hereafter.

We consider the target as the solution~$w_\ttt=w_\ttt(t,x)$ of~\eqref{sys-vort}, 
 \begin{subequations}\notag
 \begin{align}
 &\dot w_\ttt  +   \nu A w_\ttt  +\curl^* (A^{-1}w_\ttt)\cdot\nabla w_\ttt =f, \label{sys-vort-num-dyn}\\
& w_\ttt\rest{\p\Omega}=g,
\qquad w_\ttt(0)=w_{\ttt0},
\end{align}
\end{subequations}
where we may have~$g\ne0$.

Recall that we want to confirm that the controlled solution~$w= w(\tau,x)$ of~\eqref{sys-vort-control},
\begin{subequations}\notag
\begin{align}
 &\dot{w} +\nu A  w  +\curl^* (A^{-1} w)\cdot\nabla  w =f+
 \clK_M^\lambda P_{\clV_{M}}^{\clV_{M}^{\perp H}}(w-w_\ttt),\\
 &w\rest{\p\Omega}=g,\qquad w(0)= w_0,
\end{align}
with the feedback input as in~\eqref{FeedKy2-vort} (recall that~$P_{\widetilde\clV_{M}}^{\clV_{M}^{\perp H}}=P_{\widetilde\clV_{M}}^{\clV_{M}^{\perp H}}P_{\clV_{M}}^{\clV_{M}^{\perp H}}$),
\begin{equation}
\clK_M^\lambda \coloneqq V_M^\diamond K_M^\lambda= - \lambda P_{\clV_{M}}^{\widetilde\clV_{M}^{\perp H}}A P_{\widetilde\clV_{M}}^{\clV_{M}^{\perp H}},
\end{equation}
\end{subequations}
converges exponentially to~$w_\ttt$, as time~$\tau\to+\infty$; see Theorem~\ref{T:main-vort}. 

For the diffusion coefficient we have chosen~$\nu=0.01$. 

The spatial domain is a triangular domain~$\Omega\subset\bbR^2$. For a given positive integer~$M$, the set~$V_M$ of actuators and the set~$\widetilde V_M$ of auxiliary functions located as in Fig.~\ref{fig.suppActTri}, are constructed with the reference functions~$\phi$, in~\eqref{ref-w}, and~$\widetilde\phi$, in~\eqref{ref-tildew}, chosen as
\begin{equation}\notag
\begin{split}
&\phi(\overline x)\coloneqq1\quad\mbox{and}\quad \widetilde\phi(\overline x)\coloneqq\sin(\pi(\overline x_1+\tfrac12))\sin(\pi(\overline x_2+\tfrac12)),\\
\mbox{for}\quad
&\overline x\in\clO\coloneqq(-\tfrac12,\tfrac12)\times(-\tfrac12,\tfrac12).
\end{split}
\end{equation}
In particular, the actuators have square supports, as translations of rescaled copies of~$\clO$. The actuators and auxiliary functions are plotted in Fig.~\ref{fig.meshM12}, for the cases~$M\in\{1,2\}$, where we also show the coarsest spatial triangulations~$\clT_0$ used in the simulations.
\begin{figure}[ht]
\centering
\subfigure[Vorticity actuator. $M_\sigma=1$.\label{fig.mesh-vortM1}]
{\includegraphics[width=0.45\textwidth,height=0.33\textwidth]{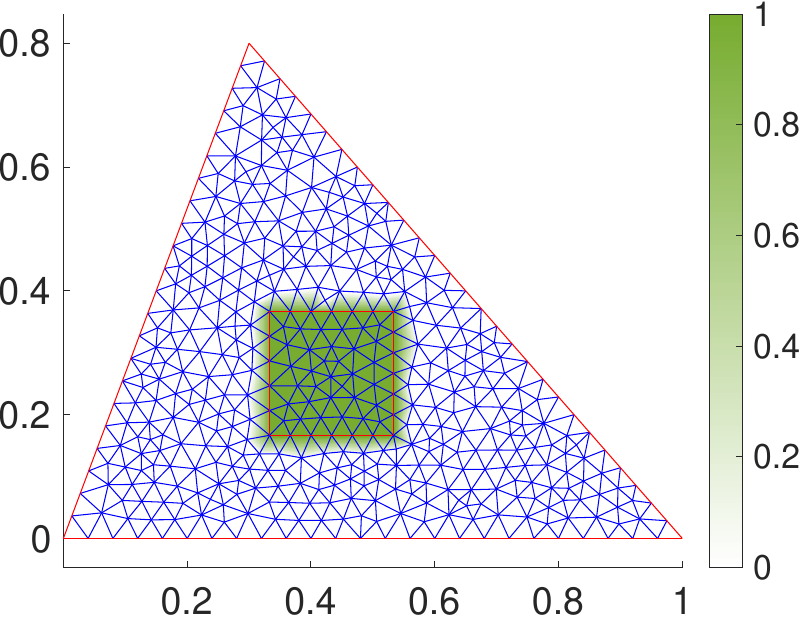}}
\subfigure[Vorticity auxiliary function. $M_\sigma=1$]
{\includegraphics[width=0.45\textwidth,height=0.33\textwidth]{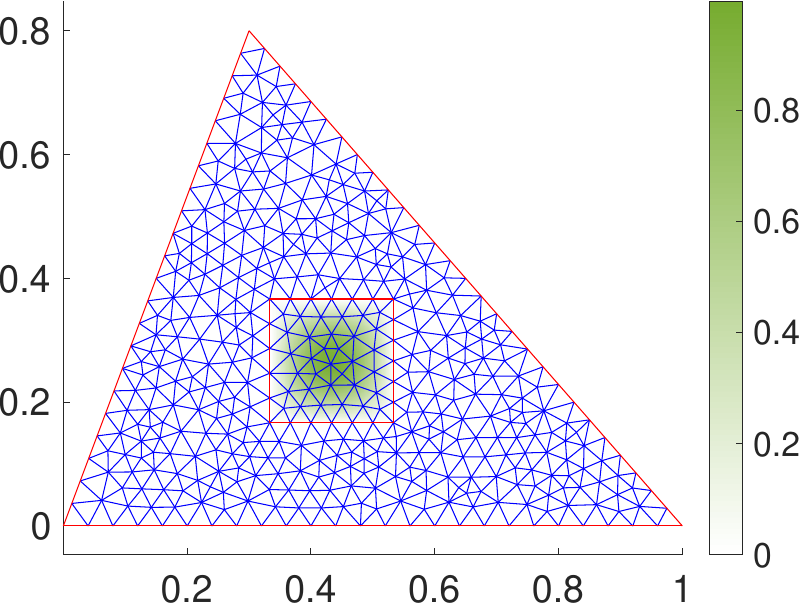}}
\\
\centering
\subfigure[Vorticity actuators. $M_\sigma=4$.\label{fig.mesh-vortM4}]
{\includegraphics[width=0.45\textwidth,height=0.33\textwidth]{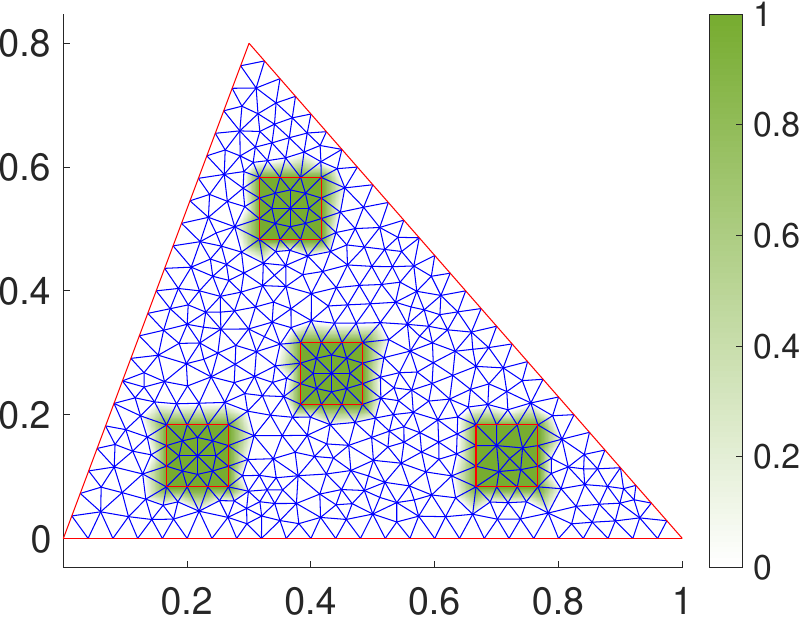}}
\subfigure[Vorticity auxiliary functions. $M_\sigma=4$.]
{\includegraphics[width=0.45\textwidth,height=0.33\textwidth]{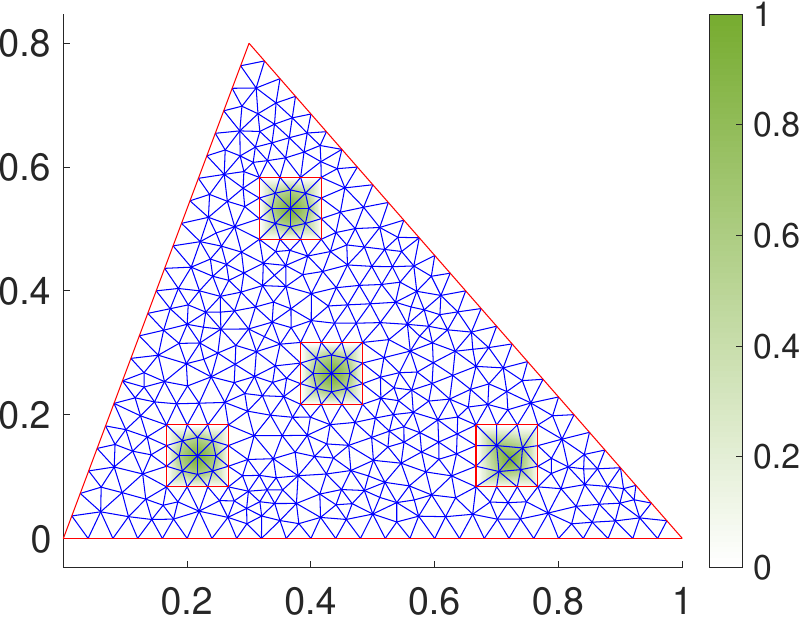}}
\caption{Triangulations~$\clT_0$, $M\in\{1,2\}$, and supports of the actuators.}
\label{fig.meshM12}
\end{figure}

Note that we used an ad-hoc geometry to including the boundary of the supports of the actuators. This is not essential, but provides (at least graphically) a better approximation of the actuators in coarse triangulations (cf.~\cite[Fig.~1]{Rod22-eect}).

We consider regular refinements of these triangulations as
\begin{align}\notag
\clT_\rho,\qquad \rho\in\{0,1,2\},
\end{align}
with~$\clT_{\rho+1}$ obtained from~$\clT_{\rho}$, $\rho\in\{0,1\}$, by dividing each triangle into~$4$ similar triangles. We used the Matlab routines~{\tt initmesh} to generate~$\clT_0$ and~{\tt refinemesh} to generate~$\clT_{\rho+1}$.

Next, we consider the temporal (uniform) discretization of the interval~$[0,+\infty)$ as
\begin{align}\notag
\bft\coloneqq t^{\rm step}\bbN=\{jt^{\rm step}\mid j\in\bbN\},\quad\mbox{with}\quad t^{\rm step}\coloneqq4\cdot10^{-4}
\end{align}
which we refine to arrive at the spatio-temporal discretizations, as
\begin{align}\label{mesh-rho}
\bfR_\rho\coloneqq(\clT_\rho,\bft_\rho)\quad\mbox{with}\quad \bft_\rho= 2^{-\rho}\bft;\qquad\mbox{for}\quad\rho\in\{0,1,2\}.
\end{align}

\begin{example}\label{Ex:unk}
We take the boundary and body external forcings as
\begin{subequations}\label{data:unk}
\begin{align}
&g(t,x)=0,\\
&f(t,x)=2\cos(2t)\sign(x_1-\tfrac3{10}+\tfrac1{10}\cos(4t))\sign(x_2-\tfrac3{10}+\tfrac1{10}\sin(4t)).
\intertext{and the initial states as}
&w_{\ttt0}(x)=0,\qquad w_0(x)=-2\sin(3 x_1)+1.
\end{align}
\end{subequations}

We considered the cases~$M\in\{1,2\}$ and performed the simulations using the spatio-temporal mesh~$\bfR_\rho$, with~$\rho=1$.

In Fig.~\ref{fig.norm-unk} we present the evolution of the free dynamics, corresponding to the case~$\lambda=0$. We see that~$w(t)$ converges exponentially to~$w_\ttt(t)$. However, we recall that we want to check that an arbitrarily large decrease rate~$\mu$ can be achieved; see Theorem~\ref{T:main-vort}. In the same Fig.~\ref{fig.norm-unk} we can indeed see that a larger rate can be achieved by increasing~$M$.  For example, with~$4$ actuators we obtain  an exponential rate as~$\mu\approx\tfrac{35}{20}=1.75$, whereas the stability rate of the free dynamics is approximately~$\mu\approx\tfrac{16}{24}=0.66$. The oscillations that we see for time~$t>20$ and~$M=4$ is due to the fact that we have reached the used standard Matlab machine precision~${\tt eps}\approx2\cdot10^{-16}\approx\rme^{-36}$.
\begin{figure}[ht]
\centering
\subfigure
{\includegraphics[width=0.45\textwidth,height=0.375\textwidth]{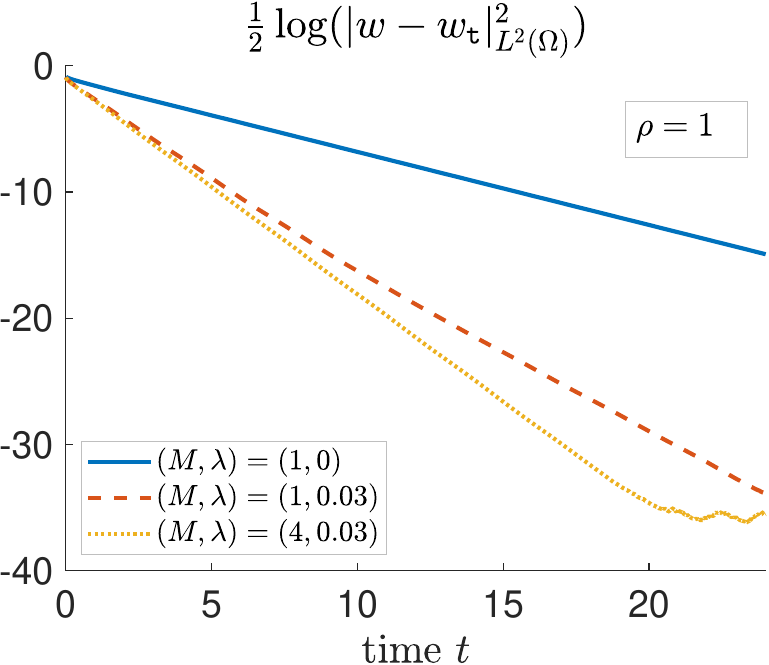}}
\caption{Increasing~$M$. Data~\eqref{data:unk}.}
\label{fig.norm-unk}
\end{figure}

In Fig.~\ref{fig.snap-unkM1-target} we present time-snapshots of the stream function of the targeted state. The associated velocity field is tangent to the plotted streamlines, pointing clockwise  around a local maximum and counterclockwise around a local minimum.

In Fig.~\ref{fig.snap-unkM1-diffBu} we present  time-snapshots of the stream function corresponding to the difference between the controlled~$y$ and the targeted~$y_\ttt$ states, and also  time-snapshots of the stream function of the corresponding control forcing 
\begin{align}\notag
Bu\coloneqq U_M^\diamond K_M^\lambda (w-w_\ttt)\in\clU_M,
\end{align} 
where in particular we can see the shape of the stream function of the actuator, with an extremum at the location of the support of its vorticity in Fig.~\ref{fig.mesh-vortM1}.

\begin{figure}[ht]
\centering
\subfigure
{\includegraphics[width=0.32\textwidth,height=0.25\textwidth]{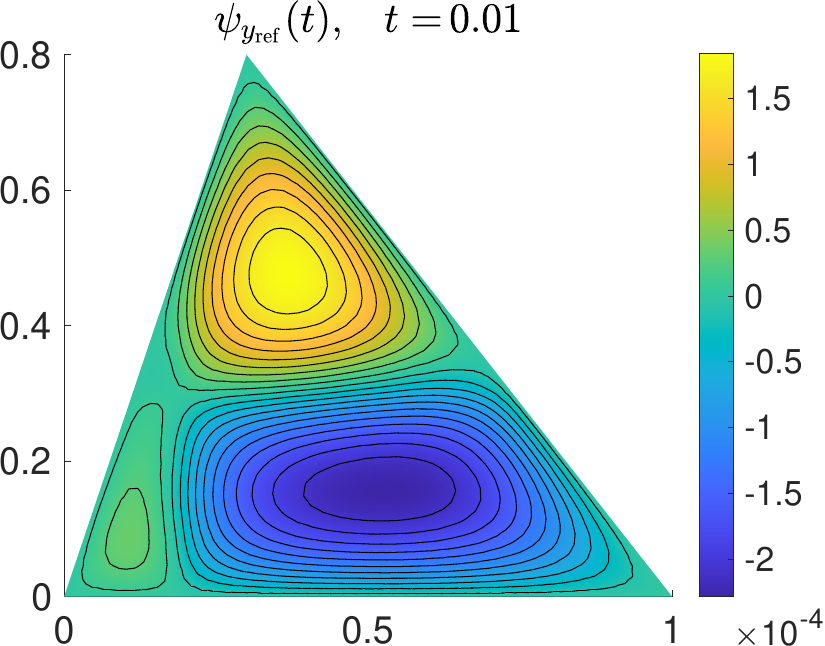}}
\subfigure
{\includegraphics[width=0.32\textwidth,height=0.25\textwidth]{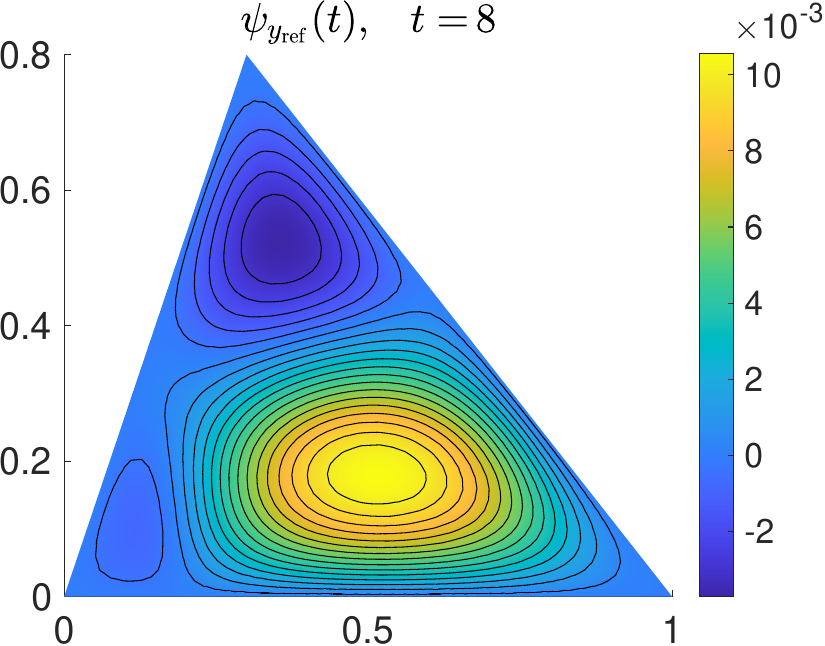}}
\subfigure
{\includegraphics[width=0.32\textwidth,height=0.25\textwidth]{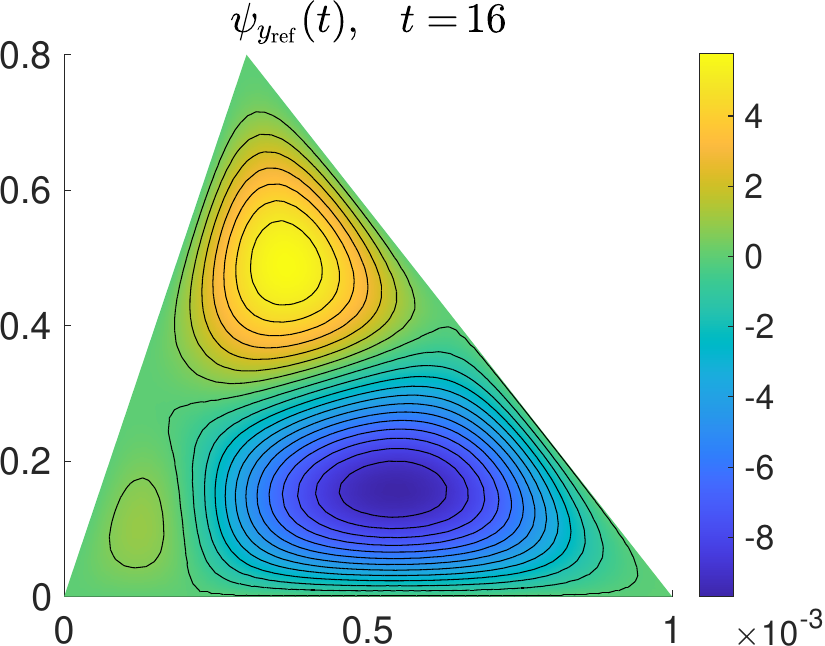}}
\caption{Stream function of the target. Data~\eqref{data:unk}.}
\label{fig.snap-unkM1-target}
\end{figure}

\begin{figure}[ht]
\centering
\subfigure
{\includegraphics[width=0.32\textwidth,height=0.25\textwidth]{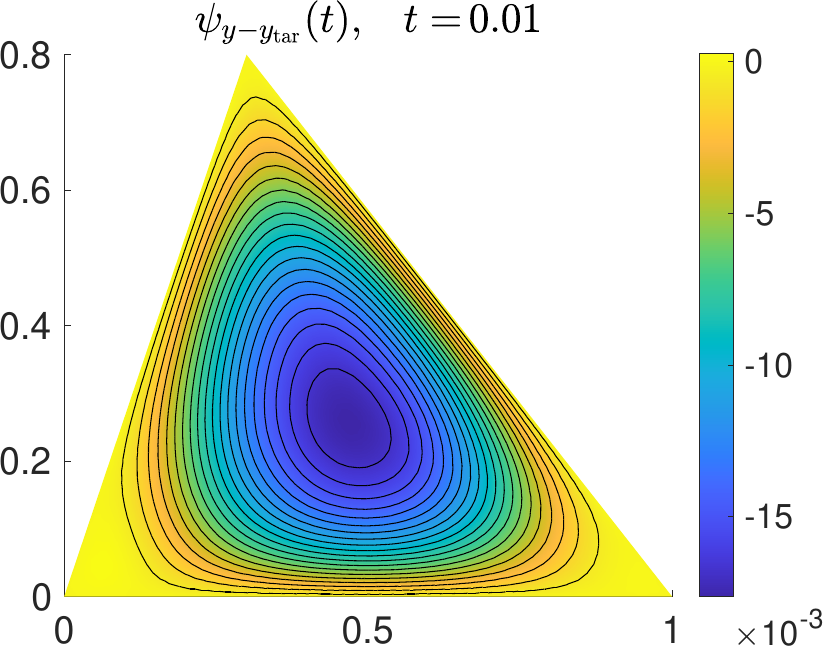}}
\subfigure
{\includegraphics[width=0.32\textwidth,height=0.25\textwidth]{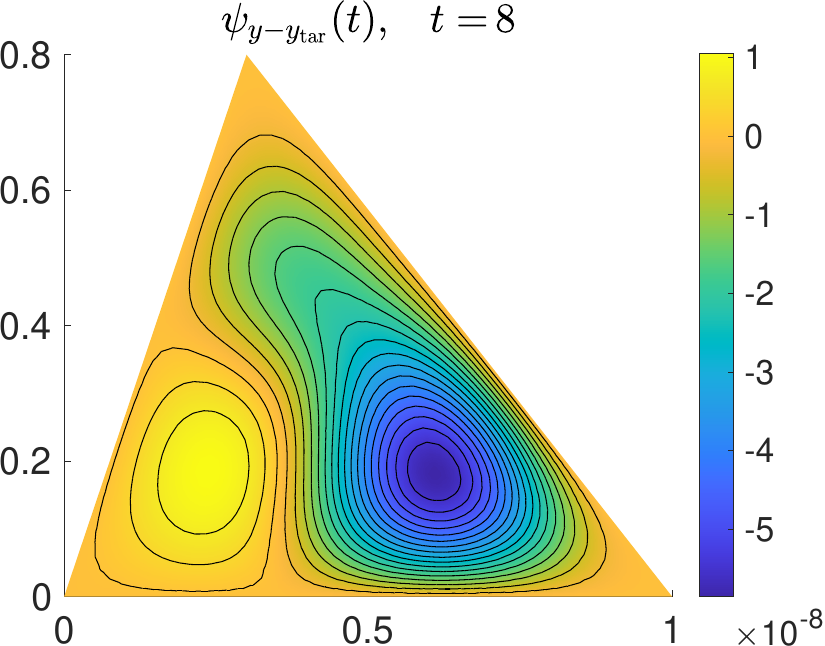}}
\subfigure
{\includegraphics[width=0.32\textwidth,height=0.25\textwidth]{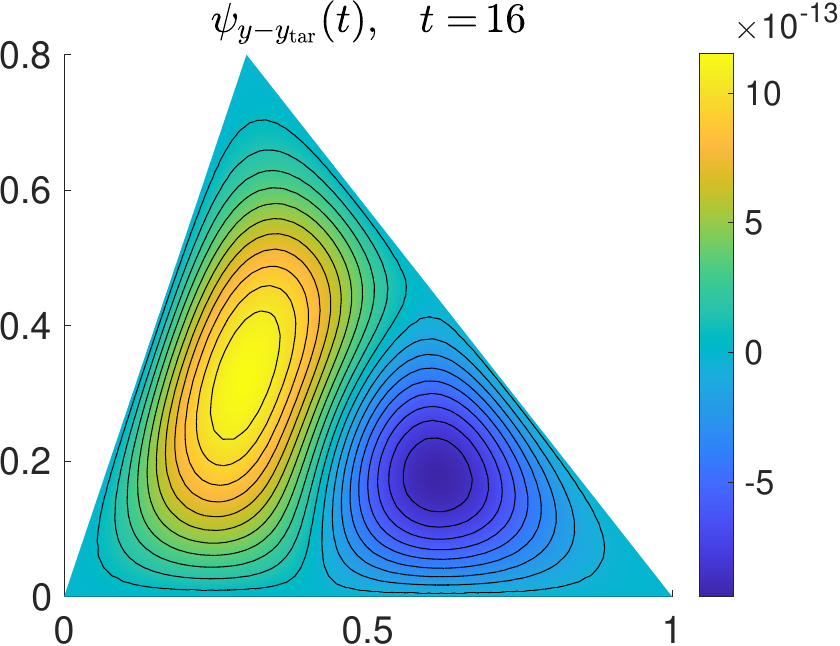}}
\\
\centering
\subfigure
{\includegraphics[width=0.32\textwidth,height=0.25\textwidth]{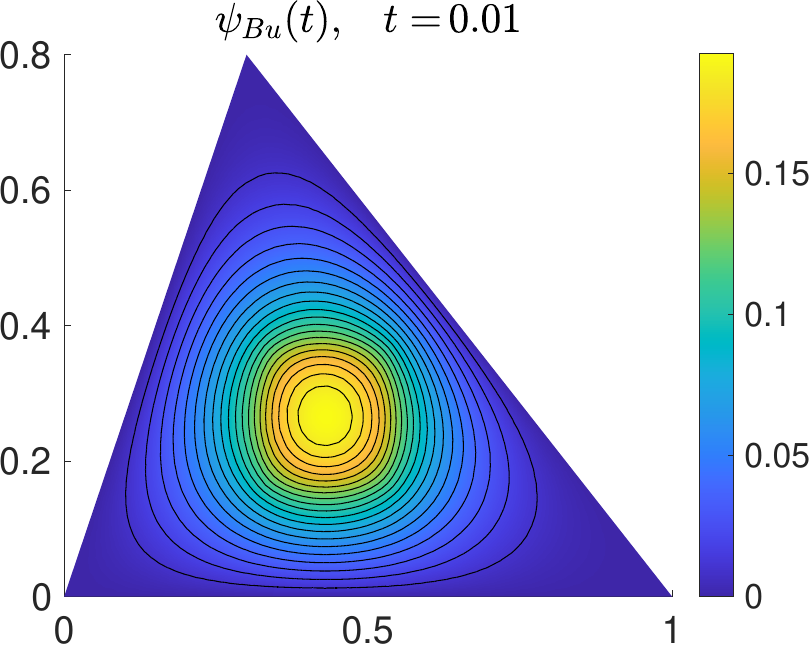}}
\subfigure
{\includegraphics[width=0.32\textwidth,height=0.25\textwidth]{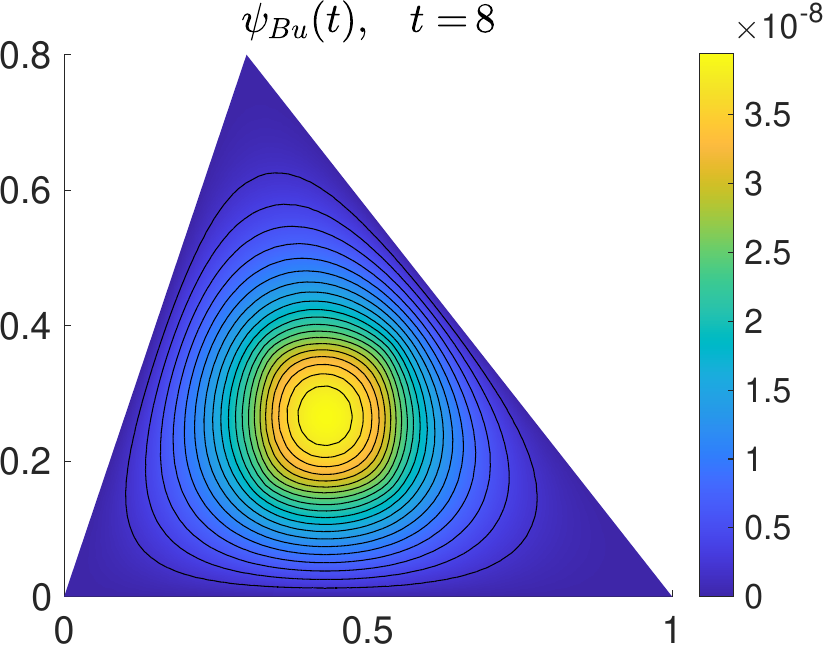}}
\subfigure
{\includegraphics[width=0.32\textwidth,height=0.25\textwidth]{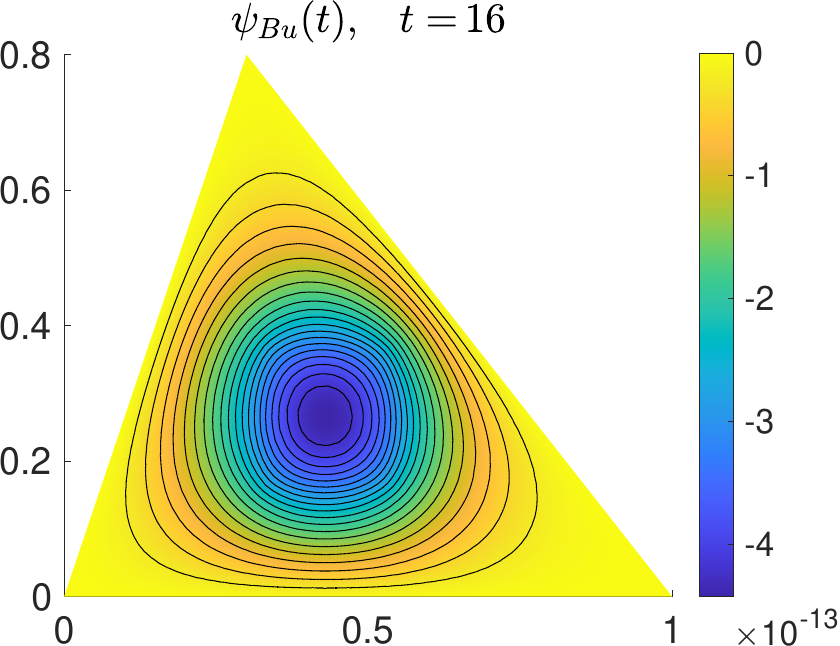}}
\caption{Stream functions of difference to target and control forcing. $M=1$. Data~\eqref{data:unk}.}
\label{fig.snap-unkM1-diffBu}
\end{figure}

Next, in Fig.~\ref{fig.snap-unkM4-diffBu}, we see the time-snapshots of the stream function for the case of~$4$ actuators. In particular, from the stream function of the control forcing we can identify local extrema at the location of the supports of the vorticity of the actuators in Fig.~\ref{fig.mesh-vortM4}.

\begin{figure}[ht]
\centering
\subfigure
{\includegraphics[width=0.32\textwidth,height=0.25\textwidth]{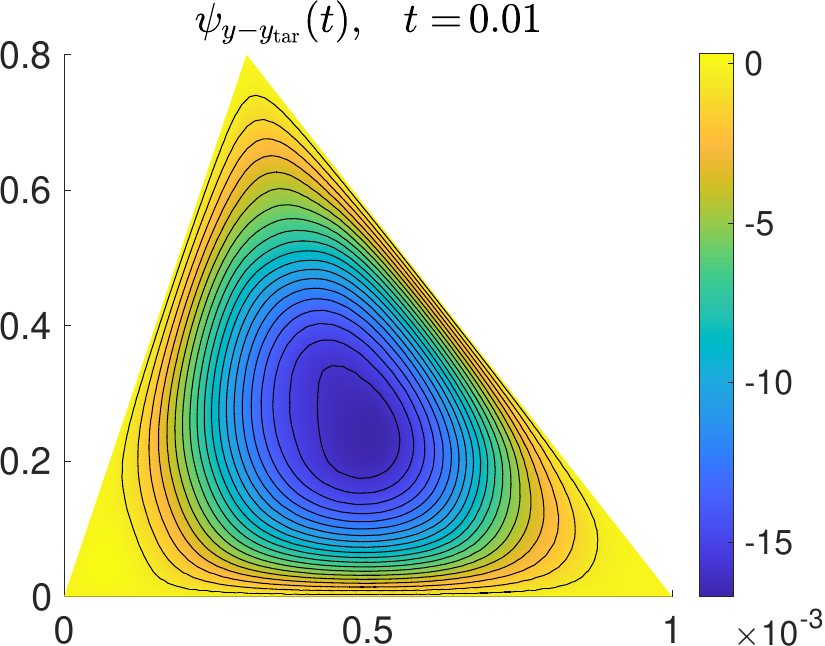}}
\subfigure
{\includegraphics[width=0.32\textwidth,height=0.25\textwidth]{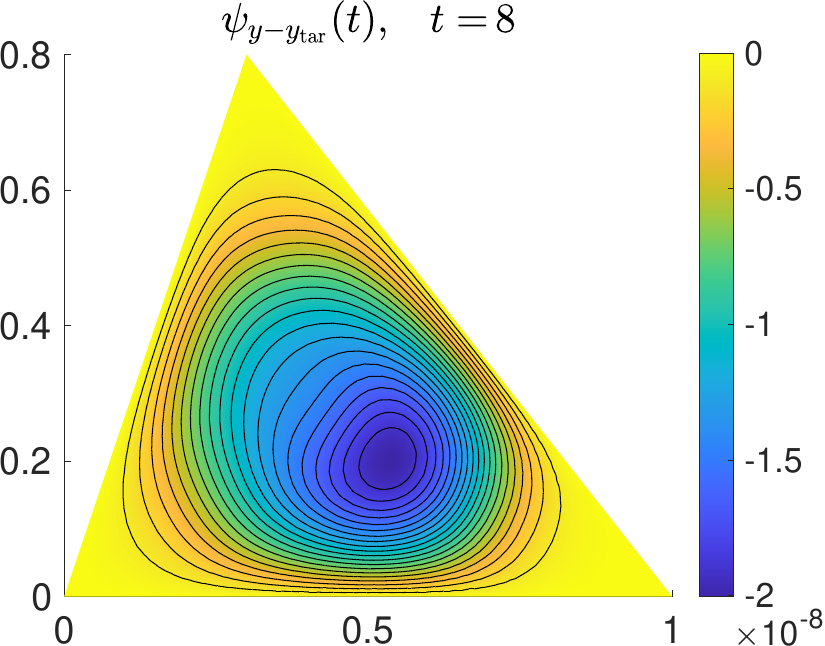}}
\subfigure
{\includegraphics[width=0.32\textwidth,height=0.25\textwidth]{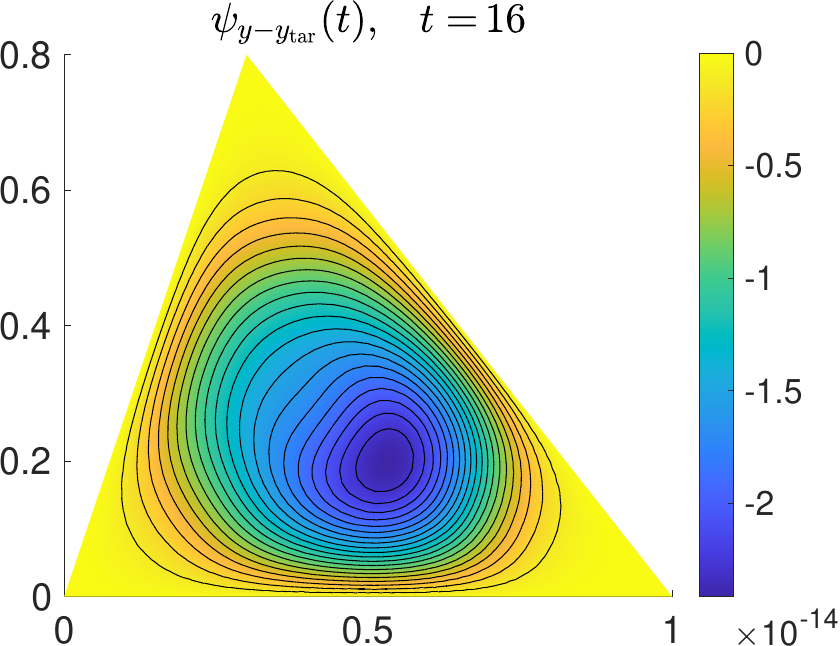}}
\\
\subfigure
{\includegraphics[width=0.32\textwidth,height=0.25\textwidth]{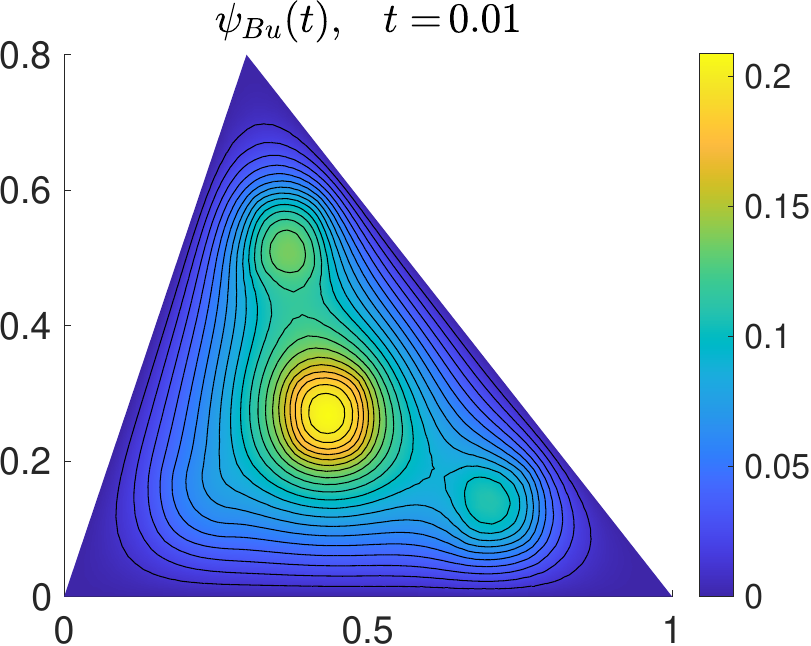}}
\subfigure
{\includegraphics[width=0.32\textwidth,height=0.25\textwidth]{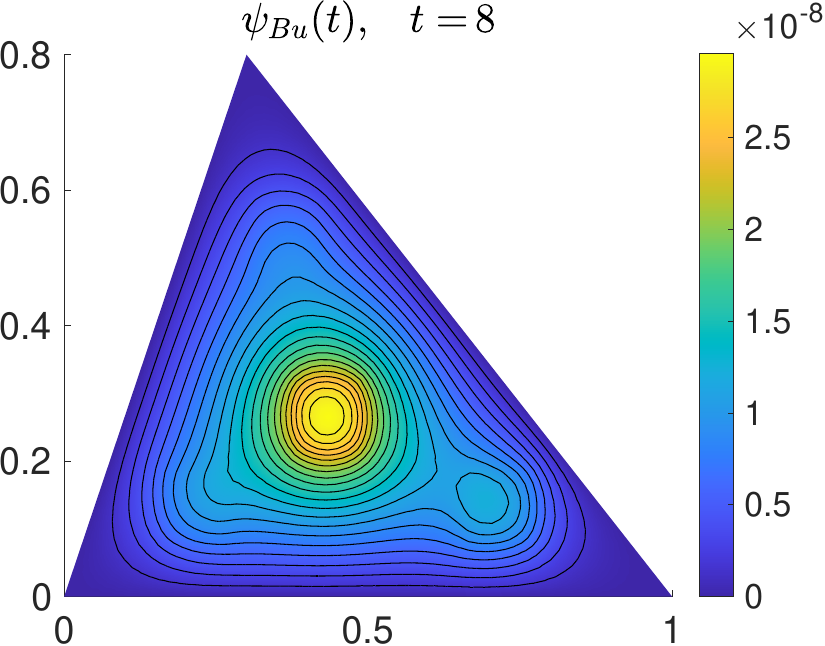}}
\subfigure
{\includegraphics[width=0.32\textwidth,height=0.25\textwidth]{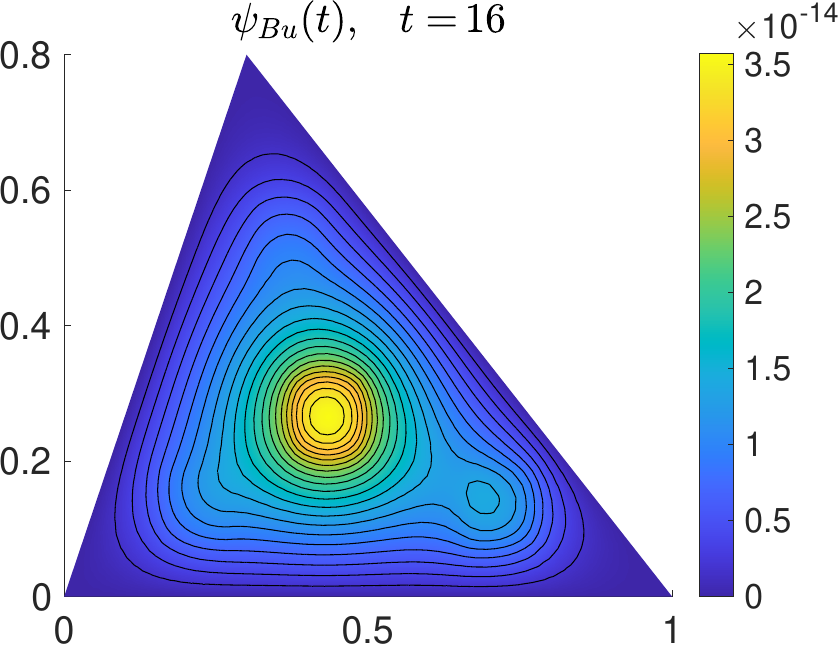}}
\caption{Stream function of difference to target and control forcing. $M=4$. Data~\eqref{data:unk}.}
\label{fig.snap-unkM4-diffBu}
\end{figure}

\end{example}

\begin{example}\label{Ex:exa}
Finally, we take  ad-hoc initial data, in order to make the function
\begin{align}\notag
&w_{\rm exa}(t,x)\coloneqq\sin(2t)(x_1-\tfrac4{10})
\end{align}
the exact solution of the free dynamics. Namely, we take the
external forcings as
\begin{subequations}\label{data:exa}
\begin{align}
&g(t,x)=w_{\rm exa}(t,x)\rest{\p\Omega},\\
&f(t,x)={\dot w_{\rm exa}} -  \Delta w_{\rm exa}  +\nu^{-1}\curl^* (A^{-1}w_{\rm exa})\cdot\nabla w_{\rm exa},
\intertext{and the initial states as}
&w_{\ttt0}(x)=0,\qquad w_0(x)=-10\sin(3x_1)\sin(4x_2)+5.
\end{align}
\end{subequations}

In Fig.~\ref{fig.exa} we confirm again that we can increase the exponential decrease rare of the differente to the target by increasing the number of actuators.

Finally, for the sake of completeness, and since we know the exact analytical expression of the targeted state,  we  compare the numerical solutions to the exact one. In Fig.~\ref{fig.refinements} we consider the~$3$ meshes in~\eqref{mesh-rho}, and compare the computed targeted solution~$w_\ttt$ with the exact one~$w_{\rm exa}$ in Fig.~\ref{fig.tar-exa}. We can see that, in this example, that the solution computed in the coarsest mesh~$\bfR_0$ gives us already a good approximation of~$w_{\rm exa}$. We compare also the computed  controlled solution~$w$ to the exact one~$w_{\rm exa}$ in Fig.~\ref{fig.ctr-exa}. 

Finally, we would like to underline that the exact analytical expression for~$w_{\rm exa}$ was used at the plotting stage only. The control input~$u(t)=K_M^\lambda (w(t)-w_\ttt(t))\in\bbR^{M_\sigma}$ was computed based on the difference between the numerical controlled~$w$ and the numerical targeted~$w_\ttt$ trajectories. That is, in the computations we did not use the fact that we know~$w_{\rm exa}$; we used only the knowledge of the data tuple~$(f,g,w_{\ttt0},w_{0})$.

\begin{figure}[ht]
\centering
\subfigure
{\includegraphics[width=0.45\textwidth,height=0.375\textwidth]{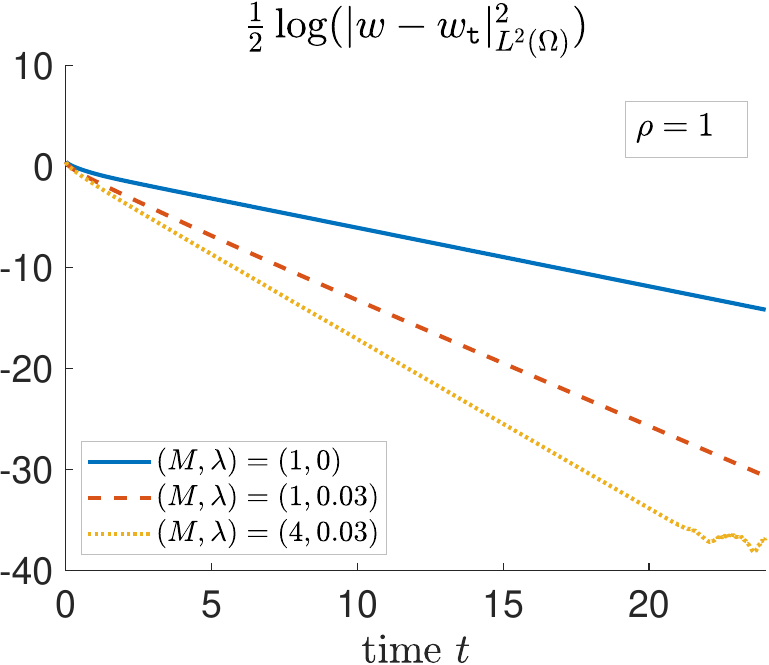}}
\caption{Increasing~$M$. Data~\eqref{data:exa}.}
\label{fig.exa}
\end{figure}

\begin{figure}[ht]
\centering
\subfigure[Numerical error between computed and exact targeted vorticity.\label{fig.tar-exa}]
{\includegraphics[width=0.45\textwidth,height=0.375\textwidth]{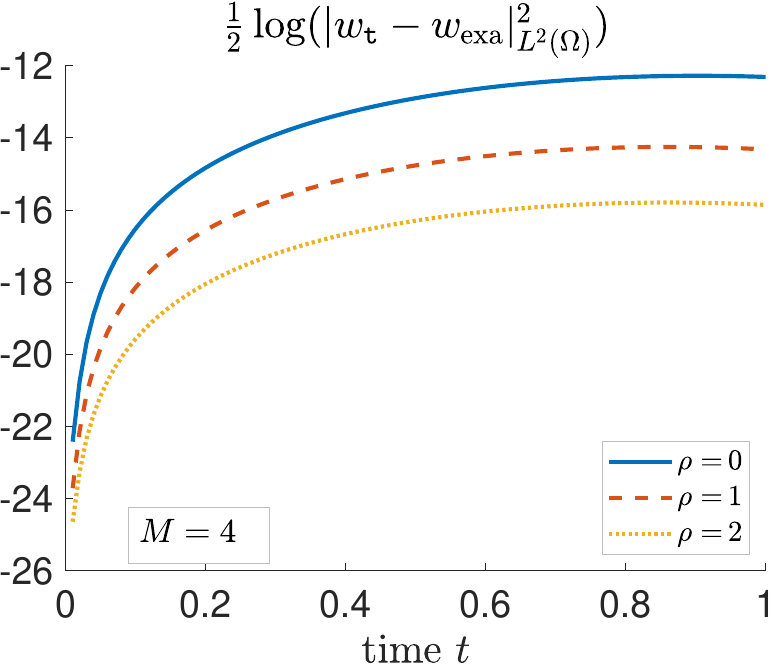}}
\qquad
\subfigure[Difference between computed controlled  vorticity and exact targeted  vorticity.\label{fig.ctr-exa}]
{\includegraphics[width=0.45\textwidth,height=0.375\textwidth]{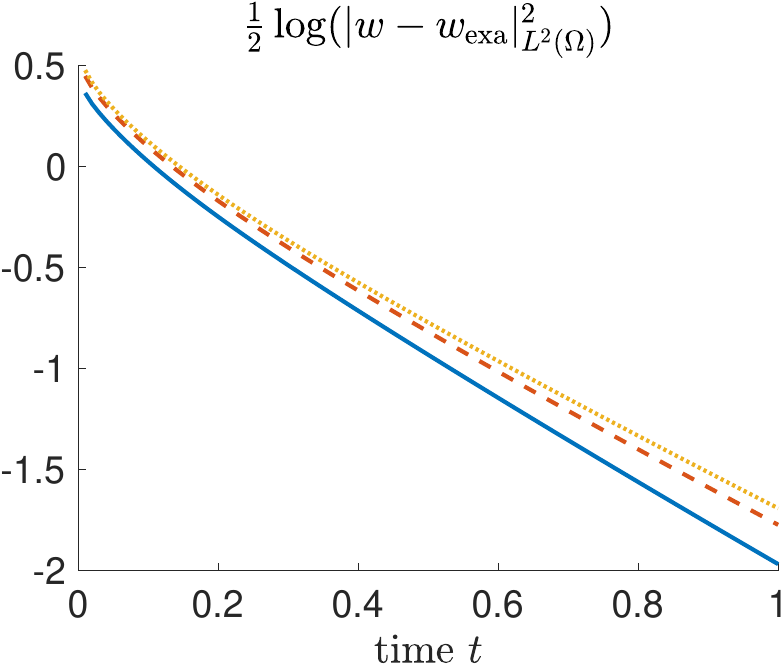}}
\caption{On the refinement level of the discretization. Data~\eqref{data:exa}.}
\label{fig.refinements}
\end{figure}

\end{example}

\begin{remark}\label{R:nonhom-bcs}
We can reduce the case nonhomogeneous boundary conditions to the homogeneous one if~$g$ is regular enough. For example, in case we have~$g= G\rest{\p\Omega}$ with~$G$ smooth in the closed cylinder~$[0,+\infty)\times\overline\Omega$. Then $w^0\coloneqq w-G$ will satisfy the homogeneous boundary conditions and a dynamics analogue to~\eqref{sys-vort-num-dyn} with some different data~$(f,w_{\ttt0})$ and some extra linear terms involving~$G$. The arguments we followed in previous sections can be adapted to this dynamics by taking care of the extra linear terms by including them in the operator~$A_{\rm rc}$ in~\eqref{sys-diff}. We refer the reader to~\cite[Def.~5.2]{Rod14-na}, where such a lifting argument~$g\mapsto G$ has been used.
\end{remark}

\begin{remark}\label{R:freestab}
In both Examples~\ref{Ex:unk} and~\ref{Ex:exa} we see that the free dynamics of the difference to the target is exponentially stable. This suggests that the free dynamics is stable in a neighborhood of the considered targets. We would like to say that we do not know whether this is the case in our setting with Lions boundary conditions in a triangular domains; though, we suspect that it is not. For example, we recall that this is not the case for periodic boundary conditions, as we can see from the works in~\cite{Liu92, Vasudevan21}. From these works we can also see that proving the instability around a given steady state is a nontrivial task, and may require to take~$\nu$ small enough (for a fixed ``normalized'' steady state)~\cite[Thm.~2.9]{Vasudevan21}. Note also that a smaller~$\nu$ requires a finer discretization which means that  numerical computations can become very expensive or unfeasible.  
\end{remark}

\section{Concluding remarks and potential developments} \label{S:conclusion}
We have shown that, given an arbitrary~$\mu>0$, we can find a finite set of actuators~$\Phi_i$ each with vorticity localized in a small domain~$\omega_i$ which enable us to stabilize, with exponential rate~$\mu$, the 2D Navier--Stokes system to a given velocity field trajectory~$y_\ttt$ of the free dynamics. The input control is given by an  explicit linear feedback operator based on suitable oblique projections.

\subsection{On the 3D Navier--Stokes equations}\label{sS:rmks3D}
It is well known that in the case of spatial domains ~$\Omega\subset\bbR^3$ the analysis of the  3D Navier--Stokes equations is more involved.
In fact, the well-posedness of the Cauchy problem on existence, uniqueness, and continuity of the solutions on the initial data is still an open problem, for large time intervals. Another point is that the vorticity is not anymore a scalar function, but a vector
\begin{equation}\notag
w=\Curl y\coloneqq(\curl_{(x_2,x_3)} (y_2,y_3),\curl_{(x_3,x_1)} (y_3,y_1),\curl_{(x_1,x_2)} (y_1,y_2)),
\end{equation}
for a velocity field~$y(x)\eqqcolon(y_1,y_2,y_3)(x)$, with the generalization of~$\curl$ in~\eqref{curl} as
\begin{equation}\notag
\curl_{(a,b)} (f_1,f_2)\coloneqq \tfrac{\p}{\p b}f_1-\tfrac{\p}{\p a}f_2.
\end{equation}
The dynamics of the vorticity is given by a system of three scalar parabolic equations
  \begin{align}\notag
 &\dot w_\ttt  - \nu \Delta w_\ttt  +y_\ttt\cdot\nabla w_\ttt +w_\ttt\cdot\nabla y_\ttt =\curl f,\qquad  
w_\ttt(0)=w_{\ttt0},
\end{align}
instead of the single one~\eqref{sys-vort} that we have the 2D case.  The Lions boundary conditions~$\clG y_\ttt\rest{\p\Omega} = 0$ also take the more cumbersome expression in the 3D case as
\begin{align}\notag
        \clG=(y_\ttt\cdot\bfn, \Curl y_\ttt - ((\Curl y_\ttt)\cdot \bfn)\bfn);
\end{align}
see~\cite{XiaoXin07}. That is, now the vorticity is normal to  the boundary, but not necessarily vanishing as in the 2D case; see~\eqref{lions-bc}.

 Thus, though the possibility of derivation of an analogue stabilizability result seems plausible, it will involve considerable extra work in order to overcome the new regularity issues. In particular, we can see that within the proofs of Lemmas~\ref{LA:A1} and~\ref{LA:NN} we have used arguments (e.g., Sobolev embeddings) which do not hold (exactly in the same way) in the 3D case. Due to the importance of the 3D case in a wider range of real-world applications, it would be interesting to investigate derivation of such an analogue stabilizability result a  future work. 
 
\subsection{On the  ``localized'' actuators}
We have considered actuators with vorticity locally supported in small subdomains with the same shape, up to a translation and a rotation. In the literature we often find actuators taken as solenoidal projections of locally supported vector fields, for example, in the 2D case as the vector fields
\begin{equation}\label{indfvf}
\widehat\Phi_{i_1}^\bullet\coloneqq\indf_{\omega_{i}}(1,0),\qquad\widehat\Phi_{i_2}^\bullet\coloneqq\indf_{\omega_{i}}(0,1),\qquad1\le i\le M;
\end{equation}
supported in subdomains~$\omega_i\subset\Omega$,~$\omega_i\ne\Omega$,  of the spatial domain~$\Omega$; see~\cite[Sect.~4, Exa.~2]{Azmi22}. See also~\cite[Sect.~4, Exa.~2]{Azmi22}, \cite[Eqs.~(1.1) and~(1.4)]{BarbuTri04} and~\cite[Sect.~3]{BarRodShi11},  where
other locally supported vector fields are taken. 

Actuators as in~\eqref{indfvf} are in~$(L^2(\Omega))^2$, but are not in the state space~$\bfH$ of solenoidal vector fields and their Leray projections~$\widehat\Phi_{i_j}\coloneqq\Pi\widehat\Phi_{i_j}^\bullet$ onto~$\bfH$ depends on~$\Omega$ (because~$\Pi$ depends on~$\Omega$).
Analogously, for the actuators~$\Phi_i$ considered in this manuscript, say with localized vorticity~$\curl\Phi_i=\indf_{\omega_i}$, we can see that
~$\Phi_i=\curl^*A^{-1}\indf_{\omega_i}$ also depends on~$\Omega$ (because~$A$ depends on~$\Omega$). Thus, neither of the actuators are localized, in the sense that neither~$\widehat\Phi_{i_j}$ nor~$\Phi_{i}$ can be constructed independently of~$\Omega$.
This observation raises the following question, which could be important for applications:
 can we find a family of stabilizing actuators, in~$\bfH$ and supported in small subdomains, which can be constructed (manufactured) independently of~$\Omega$? The investigation of this question is an interesting subject for future work.

\subsection{On output based feedback stabilization}
We have seen in Section~\ref{sS:intro-main-res-data} that the feedback control system that we propose can be interpreted  as a Luenberger observer, thus our strategy can be applied to state estimation (also known as continuous data assimilation~\cite{AzouaniOlsonTiti14}). We can construct a set of sensors and a set of actuators as a slight variation of the construction in Section~\ref{sS:act}. Namely, putting a sensor besides an actuator instead of just an actuator/sensor as in Figs.~\ref{fig.suppActRect} and~\ref{fig.suppActTri}. This has been done in~\cite[Fig.~1]{Rod21-aut}, where a feedback control was coupled with an observer to achieve an output-based stabilization result in the case of linear dynamics. The extension of such a result for general nonlinear dynamics is a nontrivial problem, because the so-called separation principle does not hold, in general. However, the investigation of this problem is of paramount importance for real world applications. In particular, the derivation of results on this direction concerning the nonlinear Navier--Stokes equations is an interesting subject for future research.

\bigskip\noindent
{\bf Aknowlegments.}
D. Seifu was supported by the State of Upper Austria and the Austrian Science
Fund (FWF): P 33432-NBL,  S. Rodrigues acknowledges partial support from the same grant.

{\small%
 \bibliographystyle{plainurl}%
  \bibliography{NS_OProj}%
}

\end{document}